\newcounter{todocounter}
\newtheorem{main}{Theorem}
\newtheorem{theorem}[equation]{Theorem}
\newtheorem{lemma}[equation]{Lemma}
\newtheorem{proposition}[equation]{Proposition}
\newtheorem{corollary}[equation]{Corollary}
\theoremstyle{definition}
\newtheorem{definition}[equation]{Definition}
\newtheorem{example}[equation]{Example}
\newtheorem{remark}[equation]{Remark}
\newtheorem{convention}[equation]{Convention}
\numberwithin{equation}{subsection}
\newcommand{\Q}{\mathbb{Q}}
\newcommand{\M}{\mathcal{M}}
\newcommand{\Mcof}{\M_{\cof}}
\newcommand{\mtoc}{\M^{\fC}}
\newcommand{\Msigman}{\M^{\Sigma_n}}
\newcommand{\Msigmanop}{\M^{\sigmaop_n}}
\newcommand{\Msigmabopd}{\M^{\sigmaop_{[\ub]} \times \{d\}}}
\newcommand{\N}{\mathcal{N}}
\newcommand{\ntoc}{\N^{\fC}}
\newcommand{\Nsigmabopd}{\N^{\sigmaop_{[\ub]} \times \{d\}}}
\newcommand{\C}{\mathcal{C}}
\newcommand{\D}{\mathcal{D}}
\newcommand{\sP}{\mathscr{P}}
\newcommand{\sE}{\mathscr{E}}
\renewcommand{\emptyset}{\varnothing}
\renewcommand{\tilde}[1]{\widetilde{#1}}
\DeclareMathOperator{\colim}{colim}
\newcommand{\po}{\ar@{}[dr]|(.7){\Searrow}}
\newcommand{\pb}{\ar@{}[dr]|(.3){\Nwarrow}}
\newcommand{\ch}{\mathsf{ch}}
\newcommand{\Ch}{\mathsf{Ch}}
\newcommand{\Vect}{\mathsf{Vect}}
\newcommand{\Ab}{\mathsf{Ab}}
\newcommand{\Fin}{\mathsf{Fin}}
\newcommand{\Chk}{\Ch(k)}
\newcommand{\Chkplus}{\Chk_{\geq 0}}
\DeclareMathOperator{\ev}{Ev}
\newcommand{\boxprod}{\mathbin\square}
\newcommand{\sAb}{\mathsf{sAb}}
\newcommand{\Sp}{\mathsf{Sp}}
\newcommand{\coprodover}[1]{\underset{#1}{\coprod}}
\newcommand{\tensorover}[1]{\underset{#1}{\otimes}}
\newcommand{\tensoroversigman}{\underset{\Sigma_n}{\otimes}}
\newcommand{\tensoroversigmar}{\underset{\Sigma_r}{\otimes}}
\newcommand{\algo}{\alg(\sO)}
\newcommand{\algp}{\alg(\sP)}
\newcommand{\lcm}{L_\C(\M)}
\newcommand{\ldn}{L_\D(\N)}
\newcommand{\algolcm}{\alg(\sO;\lcm)}
\newcommand{\algom}{\alg(\sO;\M)}
\newcommand{\algpldn}{\alg(\sP;\ldn)}
\newcommand{\algpn}{\alg(\sP;\N)}
\newcommand{\algplcm}{\alg(\sP;\lcm)}
\tikzset{auto}
\tikzset{empty/.style={circle,inner sep=0pt,minimum size=6mm}}
\tikzset{emptyvt/.style={circle,inner sep=0pt,minimum size=0mm}}
\tikzset{plain/.style={circle,draw,very thick,
inner sep=0pt,minimum size=6mm}}
\tikzset{fatplain/.style={rounded rectangle,draw,very thick,minimum size=6mm}}
\tikzset{bigplain/.style={rounded rectangle,draw,very thick,minimum size=.8cm}}
\tikzset{yellowvt/.style={circle,draw,fill=yellow,very thick,inner sep=0pt,minimum size=6mm}}
\tikzset{bluevt/.style={circle,draw,fill=blue!20,very thick,inner sep=0pt,minimum size=6mm}}
\tikzset{greenvt/.style={circle,draw,fill=green!30,very thick,inner sep=0pt,minimum size=6mm}}
\tikzset{redvt/.style={circle,draw,fill=red!30,very thick,inner sep=0pt,minimum size=6mm}}
\tikzset{arrow/.style={->,thick}}
\tikzset{dashedarrow/.style={->,dashed,thick}}
\tikzset{dottedarrow/.style={->,dotted,thick}}
\tikzset{mapto/.style={|->,thick}}
\tikzset{implies/.style={thick,double,double equal sign distance,-implies}}
\tikzset{line/.style={thick}}
\tikzset{dottedline/.style={dotted,thick}}
\tikzset{dashedline/.style={dashed,thick}}
\tikzset{inputleg/.style={<-,thick}}
\tikzset{outputleg/.style={->,thick}}
\tikzset{dottedinput/.style={<-,dotted,thick}}
\newcommand{\adjoint}{\hspace{-.1cm}
\nicearrow\xymatrix{ \ar@<2pt>[r] & \ar@<2pt>[l]}\hspace{-.1cm}}
\renewcommand{\hookrightarrow}{\nicexy{\ar@{^{(}->}[r] &}}
\newcommand{\nicearrow}{\SelectTips{cm}{10}}
\newcommand{\nicexy}{\nicearrow\xymatrix@C+10pt@R+10pt}
\newcommand{\narrowxy}{\nicearrow\xymatrix@R+10pt}
\newcommand{\pushout}{\ar@{}[dr]|(0.75){\Searrow}}
\newcommand{\drrpushout}{\ar@{}[drr]|(0.90){\Searrow}}
\newcommand{\comp}{\circ}
\newcommand{\defn}{\,\overset{\mathrm{def}}{=\joinrel=}\,}
\newcommand{\fbar}{\overline{f}}
\newcommand{\Rbar}{\overline{R}}
\renewcommand{\to}{\hspace{-.1cm}\nicearrow\xymatrix@C-.2cm{\ar[r]&}\hspace{-.1cm}}
\newcommand{\tensorunit}{\mathbb{1}}
\newcommand{\tensorunitm}{\tensorunit^{\M}}
\newcommand{\tensorunitn}{\tensorunit^{\N}}
\newcommand{\fB}{\mathfrak{B}}
\newcommand{\frakC}{\mathfrak{C}}
\newcommand{\fC}{\mathfrak{C}}
\newcommand{\fG}{\mathfrak{G}}
\newcommand{\sA}{\mathsf{A}}
\renewcommand{\sE}{\mathsf{E}}
\newcommand{\sL}{\mathsf{L}}
\newcommand{\Monoid}{\mathsf{Monoid}}
\newcommand{\CMonoid}{\mathsf{CMonoid}}
\newcommand{\sO}{\mathsf{O}}
\newcommand{\osuba}{\mathsf{O}_{A}}
\newcommand{\osubazero}{\mathsf{O}_{A}^0}
\newcommand{\osubaone}{\mathsf{O}_{A}^1}
\newcommand{\osubatwo}{\mathsf{O}_{A}^2}
\newcommand{\osubatminusone}{\mathsf{O}_{A}^{t-1}}
\newcommand{\osubat}{\mathsf{O}_{A}^{t}}
\newcommand{\osubainfinity}{\mathsf{O}_{A_{\infty}}}
\renewcommand{\sP}{\mathsf{P}}
\newcommand{\initialo}{\varnothing_{\sO}}
\newcommand{\initialp}{\varnothing_{\sP}}
\newcommand{\As}{\mathsf{As}}
\newcommand{\Com}{\mathsf{Com}}
\newcommand{\Lie}{\mathsf{Lie}}
\newcommand{\scrI}{\mathscr{I}}
\newcommand{\Lbar}{\overline{L}}
\newcommand{\varphibar}{\overline{\varphi}}
\newcommand{\ua}{\underline{a}}
\newcommand{\ub}{\underline{b}}
\newcommand{\uc}{\underline{c}}
\newcommand{\smallop}{{\scalebox{.5}{$\mathrm{op}$}}}
\newcommand{\cof}{{\scalebox{.5}{$\mathrm{cof}$}}}
\newcommand{\acof}{{\scalebox{.5}{$\mathrm{t.cof}$}}}
\newcommand{\tcof}{{\scalebox{.5}{$\mathrm{t.cof}$}}}
\newcommand{\clubcof}{(\clubsuit)_{\cof}}
\newcommand{\clubacof}{(\clubsuit)_{\acof}}
\newcommand{\clubtcof}{(\clubsuit)_{\tcof}}
\newcommand{\cald}{\mathcal{D}}
\newcommand{\caldop}{\mathcal{D}^{\smallop}}
\newcommand{\calm}{\mathcal{M}}
\newcommand{\calmc}{\calm^{\fC}}
\newcommand{\caln}{\mathcal{N}}
\newcommand{\set}{\mathsf{Set}}
\newcommand{\symseq}{\mathsf{SymSeq}}
\newcommand{\symseqc}{\symseq_{\fC}}
\newcommand{\symseqcm}{\symseqc(\calm)}
\newcommand{\symseqcn}{\symseqc(\N)}
\newcommand{\alg}{\mathsf{Alg}}
\newcommand{\Alg}{\mathsf{Alg}}
\newcommand{\calg}{\mathsf{CAlg}}
\newcommand{\sigmaop}{\Sigma^{\smallop}}
\newcommand{\sigmaopb}{\Sigma^{\smallop}_{\smallbrb}}
\newcommand{\sigmaopc}{\Sigma^{\smallop}_{\smallbrc}}
\newcommand{\catc}{\mathsf{Cat}^{\fC}}
\newcommand{\operad}{\mathsf{Operad}}
\newcommand{\operadc}{\operad^{\fC}}
\newcommand{\woperadc}{\operad^{\fC \rcirclearrowdown}}
\newcommand{\omegaoperad}{\operad^{\Omega}}
\newcommand{\cyoperad}{\operad^{\fC}_{\mathsf{cyc}}}
\newcommand{\modoperad}{\operad^{\fC}_{\mathsf{mod}}}
\newcommand{\halfpropc}{\frac{1}{2}\mathsf{Prop}^{\fC}}
\newcommand{\dioperad}{\mathsf{Dioperad}}
\newcommand{\dioperadc}{\dioperad^{\fC}}
\newcommand{\properadc}{\mathsf{Properad}^{\fC}}
\newcommand{\wproperadc}{\mathsf{Properad}^{\fC \rcirclearrowdown}}
\newcommand{\propc}{\mathsf{Prop}^{\fC}}
\newcommand{\wpropc}{\mathsf{Prop}^{\fC \rcirclearrowdown}}
\newcommand{\gprop}{\mathsf{Prop}^{\fG}}
\newcommand{\pofc}{\Sigma_{\frakC}}
\newcommand{\pofcop}
{\pofc^{\scalebox{.6}{$\mathrm{op}$}}}
\newcommand{\prof}{\mathsf{Prof}}
\newcommand{\profc}{\prof(\fC)}
\renewcommand{\pb}{\mathcal{P}(\fB)}
\newcommand{\smallprof}[1]
{\raisebox{.05cm}{\scalebox{0.8}{#1}}}
\newcommand{\cjbrbj}
{\smallprof{$\binom{c_j}{[\ub_j]}$}}
\newcommand{\singledbrtbbrc}
{\smallprof{$\binom{d}{[tb]; [\uc]}$}}
\newcommand{\singledaprimecprime}
{\smallprof{$\binom{d}{\ua',\uc'}$}}
\newcommand{\singledbrb}
{\smallprof{$\binom{d}{[\ub]}$}}
\newcommand{\drcub}
{\smallprof{$\binom{d}{[rc];[\ub]}$}}
\newcommand{\duarcub}
{\smallprof{$\binom{d}{[\ua];[rc];[\ub]}$}}
\newcommand{\dqcub}
{\smallprof{$\binom{d}{[qc];[\ub]}$}}
\newcommand{\duc}
{\smallprof{$\binom{d}{\uc}$}}
\newcommand{\singledbrc}
{\smallprof{$\binom{d}{[\uc]}$}}
\newcommand{\singledbrtc}
{\smallprof{$\binom{d}{[tc]}$}}
\newcommand{\singledbrabrc}
{\smallprof{$\binom{d}{[\ua]; [\uc]}$}}
\newcommand{\singledempty}
{\smallprof{$\binom{d}{\varnothing}$}}
\newcommand{\dnothing}
{\smallprof{$\binom{d}{\varnothing}$}}
\newcommand{\smallbr}[1]
{\raisebox{.03cm}{\scalebox{0.5}{#1}}}
\newcommand{\brb}{[\ub]}
\newcommand{\smallbra}{\smallbr{$[\ua]$}}
\newcommand{\smallbrb}{\smallbr{$[\ub]$}}
\newcommand{\smallbrc}{\smallbr{$[\uc]$}}
\newcommand{\dbrb}{\smallprof{$\binom{d}{\brb}$}}
\newcommand{\sigmabra}{\Sigma_{\smallbr{$[\ua]$}}}
\newcommand{\sigmabrb}{\Sigma_{\smallbr{$[\ub]$}}}
\newcommand{\sigmabrbj}{\Sigma_{\smallbr{$[\ub_j]$}}}
\newcommand{\sigmabrc}{\Sigma_{\smallbr{$[\uc]$}}}
\newcommand{\sigmabraop}{\sigmabra^{\smallop}}
\newcommand{\sigmabrbop}{\sigmabrb^{\smallop}}
\newcommand{\sigmabrbjop}{\sigmabrbj^{\smallop}}
\newcommand{\sigmabrcop}{\sigmabrc^{\smallop}}
\newcommand{\sigmabrcopd}{\sigmabrcop \times \{d\}}
\newcommand{\sigmaofc}{\pofc}
\newcommand{\sigmacop}{\pofcop}
\newcommand{\sigmacopc}{\sigmacop \times \fC}
\newcommand{\dbrch}{([\uc];d)}
\newcommand{\andspace}{\qquad\text{and}\qquad}
\renewcommand{\lim}{\mathsf{lim}\,}
\DeclareMathOperator{\Hom}{Hom}
\DeclareMathOperator{\id}{id}
\DeclareMathOperator{\Id}{Id}
\DeclareMathOperator{\Kan}{\mathsf{Kan}}
\DeclareMathOperator{\Mod}{\mathsf{Mod}}
\begin{document}

\title{Homotopical Adjoint Lifting Theorem}

\author{David White}
\address{Denison University
\\ Granville, OH}
\email{david.white@denison.edu}

\author{Donald Yau}
\address{The Ohio State University at Newark \\ Newark, OH}
\email{yau.22@osu.edu}

\begin{abstract}
This paper provides a homotopical version of the adjoint lifting theorem in category theory, allowing for Quillen equivalences to be lifted from monoidal model categories to categories of algebras over colored operads. The generality of our approach allows us to simultaneously answer questions of rectification and of changing the base model category to a Quillen equivalent one. We work in the setting of colored operads, and we do not require them to be $\Sigma$-cofibrant. Special cases of our main theorem recover many known results regarding rectification and change of model category, as well as numerous new results.  In particular, we recover a recent result of Richter-Shipley about a zig-zag of Quillen equivalences between commutative $H\Q$-algebra spectra and commutative differential graded $\Q$-algebras, but our version involves only three Quillen equivalences instead of six. We also work out the theory of how to lift Quillen equivalences to categories of colored operad algebras after a left Bousfield localization.
\end{abstract}


\maketitle

\tableofcontents

\section{Introduction}

When studying the homotopy theory of algebras over operads, a common question is that of rectification, i.e., determining when a weak equivalence $f:\sO \to \sP$ of operads induces a Quillen equivalence between $\sO$-algebras and $\sP$-algebras. Rectification can be viewed as a generalization of change-of-rings results. A question that has received less attention, but which is an important part of the theory, regards changing the base model category. When does a Quillen equivalence $L:\M \adjoint \N:R$ lift to an equivalence on the level of algebras? This question was first studied in \cite{ss03}, has been studied in a limited scope for $\Sigma$-cofibrant operads in \cite{fresse-book}, and has been studied for commutative monoids in \cite{white-commutative}, but a general treatment is lacking in the literature.

In this paper, we provide a unified framework for answering questions of rectification and change of base model category simultaneously and in a great deal of generality. We work with $\mathfrak{C}$-colored operads for any set $\fC$, and we use model categories and semi-model categories as our setting for studying the homotopy theory of operad algebras. Relevant definitions are reviewed in Sections \ref{sec:prelims} and \ref{sec:colored}. Fundamentally, we are interested in studying the adjoint lifting diagram
\begin{equation*}
\nicexy{\algo \ar@<2.5pt>[r]^-{\Lbar} \ar@<2.5pt>[d]^-{U}
& \algp \ar@<2.5pt>[l]^-{R} \ar@<2.5pt>[d]^-{U} \\
\mtoc \ar@<2.5pt>[r]^-{L} \ar@<2.5pt>[u]^-{\sO \comp -}  
& \ntoc \ar@<2.5pt>[l]^-{R} \ar@<2.5pt>[u]^-{\sP \comp -}}
\end{equation*}
from a homotopical perspective, in the context of Quillen equivalences and operadic algebras. We determine when a Quillen equivalence $(L,R)$ induces a Quillen equivalence $(\overline{L}, R)$ of algebras. In Section \ref{sec:main}, we prove our Main Theorem \ref{main.theorem}, stated here:

\begin{main}[Lifting Quillen Equivalences]
Suppose:
\begin{enumerate}
\item $L : \M \adjoint \N : R$ is a nice Quillen equivalence (Def. \ref{def:nice.qeq}).
\item $f : \sO \to R\sP$ is a map of $\fC$-colored operads in $\M$ with $\fC$ a set, $\sO$ an entrywise cofibrant $\fC$-colored operad in $\M$, and $\sP$ an entrywise cofibrant $\fC$-colored operad in $\N$.  The entrywise adjoint $\fbar : L\sO \to \sP$ is an entrywise weak equivalence in $\N$. 
\end{enumerate}
Then the lifted adjunction \eqref{lbar.ocomp.diagram}
\[\nicexy{\algo \ar@<2.5pt>[r]^-{\Lbar} & \algp \ar@<2.5pt>[l]^-{R}}\]
is a Quillen equivalence between the semi-model categories of $\sO$-algebras in $\M$ and of $\sP$-algebras in $\N$ (Theorem \ref{theorem623}).
\end{main}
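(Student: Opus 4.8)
The plan is to apply a standard criterion for a Quillen adjunction to be a Quillen equivalence --- one that is available for semi-model categories as well --- to the lifted adjunction $(\Lbar,R)$, whose existence and whose source and target semi-model structures come from the earlier sections (using that $\sO$ and $\sP$ are entrywise cofibrant). Let $L_0$ and $R_0$ denote the adjoints of the Quillen equivalence $\mtoc\adjoint\ntoc$ obtained by applying the nice Quillen equivalence of Definition~\ref{def:nice.qeq} coordinatewise; in particular $R$, and hence $R_0$, is lax symmetric monoidal. Since $R$ on algebras is ``apply $R_0$ and then restrict the induced $R_0\sP$-algebra structure along $f : \sO\to R_0\sP$'', we have $UR=R_0U$ on underlying objects; as fibrations and trivial fibrations of algebras are created by the forgetful functors $U$ while $R_0$ is right Quillen, $R$ preserves fibrations and trivial fibrations, so $(\Lbar,R)$ is a Quillen adjunction. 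It then suffices to show: (i) $R$ reflects weak equivalences between fibrant objects; and (ii) for every cofibrant $\sO$-algebra $A$, the derived unit $A\to R\bigl((\Lbar A)^{\mathrm{fib}}\bigr)$ is a weak equivalence. Part (i) is immediate, because weak equivalences and fibrations of algebras are detected by $U$ and $R_0$ reflects weak equivalences between fibrant objects.

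For (ii) we reduce to the key assertion that \emph{for every cofibrant $\sO$-algebra $A$, the natural comparison $\lambda_A : L_0(UA)\to U(\Lbar A)$ in $\ntoc$ --- the adjunct of the underlying unit $UA\to R_0U(\Lbar A)$ --- is a weak equivalence}. Granting this, recall from the earlier sections that a cofibrant $\sO$-algebra is a retract of a cell complex built from the initial algebra $\sO\comp\varnothing$ by transfinite pushouts along free maps $\sO\comp K\to\sO\comp L$ on cofibrations $K\to L$ between cofibrant objects of $\mtoc$; together with the niceness hypotheses this shows $UA$ is cofibrant in $\mtoc$. Then, up to the usual comparison of fibrant replacements, the underlying derived unit of $(\Lbar,R)$ at $A$ is the composite
\[
UA \longrightarrow R_0\bigl((L_0UA)^{\mathrm{fib}}\bigr) \xrightarrow{\;R_0((\lambda_A)^{\mathrm{fib}})\;} R_0\bigl((U\Lbar A)^{\mathrm{fib}}\bigr),
\]
whose first arrow is the derived unit of $(L_0,R_0)$ at the cofibrant object $UA$ --- a weak equivalence since $(L_0,R_0)$ is a Quillen equivalence --- and whose second arrow is $R_0$ applied to a weak equivalence between fibrant objects --- a weak equivalence by the key assertion together with Ken Brown's lemma for the right Quillen functor $R_0$. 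This gives (ii).

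It remains to establish the key assertion, which we prove by induction along a presentation $\sO\comp\varnothing=A_0\to A_1\to\cdots\to\colim_\alpha A_\alpha$ exhibiting $A$ as a retract, with each $A_\alpha\to A_{\alpha+1}$ a pushout in $\algo$ of a free map $\sO\comp K\to\sO\comp L$ as above. As $\Lbar$ is a left adjoint, $\Lbar(\sO\comp X)=\sP\comp L_0X$, and $\Lbar$ preserves these pushouts and transfinite compositions, so $\Lbar A_{\alpha+1}$ is the pushout of $\sP\comp L_0K\to\sP\comp L_0L$ under $\Lbar A_\alpha$ and $\lambda$ is compared one stage at a time. At the base stage, $\lambda_{A_0}$ is induced by $\fbar$ on the nullary entries of the operads, hence a weak equivalence by hypothesis. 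For the inductive step, we apply $L_0$ to the canonical filtration of the underlying map $UA_\alpha\to UA_{\alpha+1}$ provided by the earlier sections and compare it layer by layer with the analogous filtration of $U(\Lbar A_\alpha)\to U(\Lbar A_{\alpha+1})$; each comparison map is assembled from maps of the form ``$L_0$ applied to an equivariant tensor of an entry of $\sO$ with an iterated symmetric pushout-power of $K\to L$ and tensor powers of $UA_\alpha$'' mapping to ``the corresponding equivariant tensor of an entry of $\sP$ with the same construction for $L_0K\to L_0L$ and $U(\Lbar A_\alpha)$''. Such maps are weak equivalences: this uses the inductive hypothesis on $\lambda_{A_\alpha}$, the entrywise weak equivalence $\fbar$ between the cofibrant entries of $\sO$ and of $\sP$, and the niceness hypotheses, by which $L_0$ commutes up to weak equivalence with the relevant iterated tensor and pushout-power constructions on cofibrations between cofibrant objects, and passing to the pertinent symmetric-group orbits preserves the weak equivalences involved. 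Forming coproducts over profiles, colimits over filtration layers, and the transfinite composition over $\alpha$ --- all preserved by $L_0$ and $\Lbar$ --- completes the induction.

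The step we expect to be the main obstacle is this last one: controlling the passage to symmetric-group orbits without assuming the operads are $\Sigma$-cofibrant. The resolution --- which is the point of the constructions in the earlier sections --- is that symmetric powers of arbitrary objects never occur; the only symmetric powers appearing in the cell filtrations are iterated symmetric pushout-powers of cofibrations between cofibrant objects, and these are cofibrant as equivariant diagrams, so that tensoring with the merely entrywise-cofibrant operad entries and passing to orbits remains homotopically meaningful. Entrywise cofibrancy of $\sO$ and $\sP$ disposes of the remaining boundary terms, and the monoidality built into Definition~\ref{def:nice.qeq} is exactly what allows $L_0$ to be transported through the whole construction up to weak equivalence; the rest is routine bookkeeping with the Quillen-equivalence criterion and the machinery already in place.
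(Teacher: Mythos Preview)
Your overall architecture matches the paper's: reduce to showing that the comparison map $\chi_A \colon LUA \to U\Lbar A$ is a weak equivalence for every cofibrant $\sO$-algebra $A$, and then prove this by induction along a cell presentation of $A$ using the filtration of free-algebra pushouts from the earlier sections. The Quillen-equivalence criterion you invoke is a harmless variant of the one the paper uses.

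There is, however, a genuine gap in your inductive step. The filtration of $UA_\alpha \to UA_{\alpha+1}$ from the earlier sections has layers that are pushouts along
\[
\sO_{A_\alpha}\singledbrtc \tensorover{\Sigma_t} Q^{t}_{t-1}(i)
\;\longrightarrow\;
\sO_{A_\alpha}\singledbrtc \tensorover{\Sigma_t} Y^{\otimes t},
\]
and the analogous layers on the $\sP$-side involve $\sP_{\Lbar A_\alpha}\singledbrtc$. To compare these under $L$ you must know that $L\sO_{A_\alpha} \to \sP_{\Lbar A_\alpha}$ is an entrywise weak equivalence \emph{at every profile}, not just at the nullary entries. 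Your inductive hypothesis only records the nullary case $\lambda_{A_\alpha}$, and your description of the layers as ``an entry of $\sO$ \ldots\ and tensor powers of $UA_\alpha$'' is not what the filtration actually produces: the enveloping operad $\sO_{A_\alpha}$ is a reflexive coequalizer of such expressions, not a coproduct of them, so you cannot simply plug in $\fbar$ and $\lambda_{A_\alpha}$ separately. The paper's fix is exactly to strengthen the inductive statement to all of $L\sO_A \to \sP_{\Lbar A}$ (Proposition~\ref{loa.to.pla}); this requires a second, inner induction using the filtration of $\sO_{A_\alpha} \to \sO_{A_{\alpha+1}}$ itself (Proposition~\ref{o-ainfinity}), not just of the underlying algebras.

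A second, smaller point: your final paragraph asserts that the iterated pushout-powers $Q^r_{r-1}(i)$ and $Y^{\otimes r}$ are ``cofibrant as equivariant diagrams.'' They are not, in general; the permutation action on $Y^{\otimes r}$ is rarely $\Sigma_r$-projectively cofibrant. This is precisely why the entrywise-cofibrant theorem needs the conditions $(\filledstar)$, $(\medstar)$, $(\#)$, and $(\clubsuit)$ from Definition~\ref{def:nice.qeq}: they substitute for $\Sigma$-cofibrancy by guaranteeing that $(-)_{\Sigma_n}$ and $L$ behave well on objects that are merely cofibrant in $\M$. Your instinct that ``the monoidality built into Definition~\ref{def:nice.qeq}'' is doing the work is right, but the mechanism is not the one you name.
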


We provide numerous examples of model categories satisfying the conditions of this theorem, and we prove a version of this theorem for $\Sigma$-cofibrant colored operads (Theorem \ref{main.theorem.Sigma}), where the hypotheses on the adjunction $(L,R)$ are effectively always satisfied in examples of interest.

In Section \ref{sec:rect-and-change}, we specialize this general theorem and the $\Sigma$-cofibrant version to obtain results about rectification, and about lifting Quillen equivalences to modules, (commutative) algebras,  non-symmetric operads, generalized props, cyclic operads, and modular operads. We recover results from \cite{ss03}, \cite{muro11}, \cite{muro14}, and \cite{fresse-book} all as special cases of the same general theorem, and we then provide numerous new applications of this theorem. The results for non $\Sigma$-cofibrant operads are entirely new. The results of this paper have been applied \cite{GW, white-yau4, yau}.

In Section \ref{sec:bous-loc}, we apply our main theorem in left Bousfield localizations $\lcm$ and $\ldn$, so that we may lift Quillen equivalences $L:\M \adjoint \N:R$ to local categories of algebras
\[\overline{L}: \algolcm \adjoint \algpldn:R.\]
In Theorems \ref{main.theorem.local} and \ref{main.theorem.local.Sigma}, we provide checkable conditions allowing for the local application of our main results,Theorems \ref{main.theorem} and \ref{main.theorem.Sigma}.  We provide examples of model categories where the conditions are satisfied. We specialize these local results to obtain results about local rectification, local change-of-rings, and local modules, (commutative) algebras,  non-symmetric operads,  generalized props, cyclic operads, and modular operads. 

Lastly, in Section \ref{sec:applications}, we recover results of \cite{ss03}, \cite{shipley-hz-spectra}, and \cite{richter-shipley}, where chains of Quillen equivalences were manually lifted to categories of modules, (commutative) monoids, and $E_\infty$-algebras. These examples include the Dold-Kan equivalence, the Quillen equivalence between DGAs and $HR$-algebra spectra, and the Quillen equivalence between commutative DGAs and commutative $HR$-algebra spectra. We demonstrate how our main theorems could have been used in these settings.  In particular, as special cases of our main theorem in the characteristic $0$ setting, we obtain a zig-zag of three Quillen equivalences between commutative $H\Q$-algebra spectra and commutative differential graded $\Q$-algebras.  This is a slightly improved version of \cite{richter-shipley} (Corollary 8.4), which contains a zig-zag of six Quillen equivalences between the same end categories.   We hope our unified approach will find many applications in future such settings.


\section{Model Categories}
\label{sec:prelims}

In this section we recall some key concepts in model category theory.  Our main references here are \cite{hirschhorn,hovey,ss,ss03}.  In this paper, $(\M, \otimes, \tensorunit, \Hom)$ and $\N$ will usually be a symmetric monoidal closed category with $\otimes$-unit $\tensorunit$ and internal hom $\Hom$.  We assume $\M$ and $\N$ have all small limits and colimits.  Their initial and terminal objects are denoted by $\varnothing$ and $*$, respectively.

\subsection{Monoidal Model Categories}

We assume the reader is familiar with basic facts about model categories as presented in \cite{hirschhorn} and \cite{hovey}.  For a model category $\M$, its subcategory of cofibrations is denoted by $\Mcof$. When we work with model categories they will most often be \emph{cofibrantly generated}; i.e., there is a set $I$ of cofibrations and a set $J$ of trivial cofibrations (i.e. maps which are both cofibrations and weak equivalences) which permit the small object argument (with respect to some cardinal $\kappa$), and a map is a (trivial) fibration if and only if it satisfies the right lifting property with respect to all maps in $J$ (resp. $I$).

Let $I$-cell denote the class of transfinite compositions of pushouts of maps in $I$, and let $I$-cof denote retracts of such. In order to run the small object argument, we will assume the domains $K$ of the maps in $I$ (and $J$) are $\kappa$-small relative to $I$-cell (resp. $J$-cell); i.e., given a regular cardinal $\lambda \geq \kappa$ and any $\lambda$-sequence $X_0\to X_1\to \cdots$ formed of maps $X_\beta \to X_{\beta+1}$ in $I$-cell, the map of sets
\[
\nicexy{\colim_{\beta < \lambda} \M\bigl(K,X_\beta\bigr) \ar[r] 
& \M\bigl(K,\colim_{\beta < \lambda} X_\beta\bigr)}
\]
is a bijection. An object is \emph{small} if there is some $\kappa$ for which it is $\kappa$-small. See Chapter 10 of \cite{hirschhorn} for a more thorough treatment of this material.

We must now discuss the interplay between the monoidal structure and the model structure which we will require in this paper. This definition is taken from 3.1 in \cite{ss}.

\begin{definition}
A symmetric monoidal closed category $\calm$ equipped with a model structure is called a \emph{monoidal model category} if it satisfies the following \emph{pushout product axiom}: 

\begin{itemize}
\item Given any cofibrations $f:X_0\to X_1$ and $g:Y_0\to Y_1$, the pushout corner map
\[
\nicexy{
X_0\otimes Y_1 \coprod\limits_{X_0\otimes Y_0}X_1\otimes Y_0 
\ar[r]^-{f\boxprod g} & X_1\otimes Y_1}
\]
is a cofibration. If, in addition, either $f$ or $g$ is a weak equivalence then $f\boxprod g$ is a trivial cofibration.
\end{itemize}
\end{definition}

Note that the pushout product axiom is equivalent to the statement that $-\otimes-$ is a Quillen bifunctor.

\subsection{Semi-Model Categories}

When attempting to study the homotopy theory of algebras over a colored operad, the usual method is to transfer a model structure from $\calm$ to this category of algebras along the free-forgetful adjunction (using Kan's Lifting Theorem \cite{hirschhorn} (11.3.2)). Unfortunately, it is often the case that one of the conditions for Kan's theorem cannot be checked fully, so that the resulting homotopical structure on the category of algebras is something less than a model category. This type of structure was first studied in \cite{hovey-monoidal} and \cite{spitzweck-thesis}, and later in published sources such as \cite{fresse} and \cite{fresse-book} (12.1).

\begin{definition}
\label{defn:semi-model-cat}
Assume there is an adjunction $F:\calm \adjoint \D:U$ where $\calm$ is a cofibrantly generated model category, $\D$ is bicomplete, and $U$ preserves colimits indexed by non-empty ordinals. 

We say that $\D$ is a \emph{semi-model category} if $\D$ has three classes of morphisms called \emph{weak equivalences}, \emph{fibrations}, and \emph{cofibrations} such that the following axioms are satisfied.   A \emph{cofibrant} object $X$ means an object in $\D$ such that the map from the initial object of $\D$ to $X$ is a cofibration in $\D$.  Likewise, a \emph{fibrant} object is an object for which the map to the terminal object in $\D$ is a fibration in $\D$.

\begin{enumerate}
\item $U$ preserves fibrations and trivial fibrations ($=$ maps that are both weak equivalences and fibrations).
\item $\D$ satisfies the 2-out-of-3 axiom and the retract axiom, i.e. if a morphism $f$ is a retract of $g$, and $g$ is a weak equivalence, fibration, or cofibration, then so is $f$.
\item Cofibrations in $\D$ have the left lifting property with respect to trivial fibrations. Trivial cofibrations ($=$ maps that are both weak equivalences and cofibrations) in $\D$ whose domain is cofibrant have the left lifting property with respect to fibrations.
\item Every map in $\D$ can be functorially factored into a cofibration followed by a trivial fibration. Every map in $\D$ whose domain is cofibrant can be functorially factored into a trivial cofibration followed by a fibration.
\item The initial object in $\D$ is cofibrant.
\item Fibrations and trivial fibrations are closed under pullback.
\end{enumerate}

$\D$ is said to be \textit{cofibrantly generated} if there are sets of morphisms $I'$ and $J'$ in $\D$ such that the following conditions are satisfied.
\begin{enumerate}
\item
Denote by $I'$-inj the class of maps that have the right lifting property with respect to maps in $I'$.  Then $I'$-inj is the class of trivial fibrations.
\item
$J'$-inj is the class of fibrations in $\D$.
\item
The domains of $I'$ are small relative to $I'$-cell.
\item
The domains of $J'$ are small relative to maps in $J'$-cell whose domain is sent by $U$ to a cofibrant object in $\calm$.
\end{enumerate}
\end{definition}

In practice the weak equivalences (resp. fibrations) are morphisms $f$ such that $U(f)$ is a weak equivalence (resp. fibration) in $\calm$, and the generating (trivial) cofibrations of $\D$ are maps of the form $F(I)$ and $F(J)$ where $I$ and $J$ are the generating (trivial) cofibrations of $\calm$.

Note that the only difference between a semi-model structure and a model structure is that one of the lifting properties and one of the factorization properties requires the domain of the map in question to be cofibrant. Because fibrant and cofibrant replacements are constructed via factorization, (4) of a semi-model category implies that every object has a cofibrant replacement and that cofibrant objects have fibrant replacements. So one could construct a fibrant replacement functor which first does cofibrant replacement and then does fibrant replacement. These functors behave as they would in the presence of a full model structure.

\subsection{Quillen Adjunctions and Quillen Equivalences}

An adjunction with left adjoint $L$ and right adjoint $R$ is denoted by $L \dashv R$.

\begin{definition}
A \emph{lax monoidal functor} $F : \M \to \N$ between two monoidal categories is a functor equipped with structure maps
\[\nicexy{FX \otimes FY \ar[r]^-{F^2_{X,Y}} & F(X \otimes Y), \quad \tensorunit^{\N} \ar[r]^-{F^0} & F\tensorunit^{\M}}\]
for $X$ and $Y$ in $\M$ that are associative and unital in a suitable sense \cite{maclane} (XI.2).  If, furthermore, $\M$ and $\N$ are symmetric monoidal categories, and $F^2$ is compatible with the symmetry isomorphisms, then $F$ is called a \emph{lax symmetric monoidal functor}.  If the structure maps $F^2$ and $F^0$ are isomorphisms, then $F$ is called a \emph{strong monoidal functor}.
\end{definition}

Note that what is called a lax monoidal functor here is simply called a monoidal functor in \cite{maclane}.

\begin{definition}
Suppose  $L : \M \adjoint \N : R$ is an adjunction between monoidal categories with $R$ a lax monoidal functor.  For objects $X$ and $Y$ in $\M$, the map
\begin{equation}\label{comonoidal.map}
\nicexy{L\left(X \otimes Y\right) \ar[r]^-{L^2_{X,Y}} & LX \otimes LY \in \N,}
\end{equation}
defined as the adjoint of the composite
\[\nicexy{X \otimes Y \ar[r]^-{(\eta_X, \eta_Y)} & RLX \otimes RLY \ar[r]^-{R^2_{X,Y}} & R\bigl(LX \otimes LY\bigr),}\]
is called the \emph{comonoidal structure map} of $L$ \cite{ss03} (3.3).  Here $\eta$ is the unit of the adjunction.
\end{definition}

The following definition applies to both model categories and semi-model categories.

\begin{definition}\label{quillen.pair}
Suppose  $L : \M \adjoint \N : R$ is an adjunction between (semi) model categories.
\begin{enumerate}
\item We call $L \dashv R$ a \emph{Quillen adjunction} if the right adjoint $R$ preserves fibrations and trivial fibrations.  In this case, we call $L$ a \emph{left Quillen functor} and $R$ a \emph{right Quillen functor}.
\item We call a Quillen adjunction $L \dashv R$ a \emph{Quillen equivalence} if, for each map $f : LX \to Y \in \N$ with $X$ cofibrant in $\M$ and $Y$ fibrant in $\N$, $f$ is a weak equivalence in $\N$ if and only if its adjoint $\fbar : X \to RY$ is a weak equivalence in $\M$.  In this case, we call $L$ a \emph{left Quillen equivalence} and $R$ a \emph{right Quillen equivalence}.
\end{enumerate}
\end{definition}

\begin{remark}
For model categories, our definition of a Quillen equivalence is of course the standard one \cite{hovey} (1.3.12).  On the other hand, for semi-model categories, our definition of a Quillen equivalence is actually slightly stronger than (and hence implies) the one given in \cite{fresse-book} (12.1.8, p.191).  To see that our definition of a Quillen equivalence between semi-model categories is stronger than Fresse's, simply use the usual proof  \cite{hovey} (1.3.13 (a) $\Rightarrow$ (b)) that there are several equivalent ways to state a Quillen equivalence for model categories.  As a consequence, the total derived functors of a Quillen equivalence between semi-model categories (as in Def. \ref{quillen.pair} (2)) form adjoint equivalences between the homotopy categories.
\end{remark}

\begin{definition}
\label{def:weak.symmetric.monoidal}
Suppose  $L : \M \adjoint \N : R$ is a Quillen adjunction between monoidal  categories that are also model categories in which $R$ is equipped with a lax (symmetric) monoidal structure.  We call the Quillen adjunction $L \dashv R$ a \emph{weak (symmetric) monoidal Quillen adjunction} \cite{ss03} (3.6) if the following two conditions hold.
\begin{enumerate}
\item For any cofibrant objects $X$ and $Y$ in $\M$, the comonoidal structure map \eqref{comonoidal.map} is a weak equivalence in $\N$.
\item For some cofibrant replacement $q : Q\tensorunit^{\M} \to \tensorunit^{\M}$ of the tensor unit in $\M$, the composite
\begin{equation}\label{unit.adjoint}
\nicexy{LQ\tensorunit^{\M} \ar[r]^-{Lq} & L\tensorunit^{\M} \ar[r]^-{\Rbar^0} & \tensorunit^{\N}}
\end{equation}
is a weak equivalence in $\N$, in which $\Rbar_0$ is the adjoint of the structure map $R^0 : \tensorunit^{\M} \to R\tensorunit^{\N}$.
\end{enumerate}
If, furthermore, $L \dashv R$ is a Quillen equivalence, then we call it a \emph{weak (symmetric) monoidal Quillen equivalence}.
\end{definition}

\begin{example} \label{example:ss03}
As discussed in \cite{ss03}, the Dold-Kan correspondence between simplicial modules and non-negatively graded dg modules over a characteristic $0$ field is a weak symmetric monoidal Quillen equivalence.
\end{example}

\subsection{Model Category Assumptions}

\begin{definition}
\label{def:star}
Suppose $\M$ is a symmetric monoidal category and a model category.  Define the following conditions.
\begin{quote}
$(\medstar)$ : Suppose $n \geq 1$, $g : U \to V \in \Msigmanop$ is a weak equivalence with $U$ and $V$ cofibrant in $\M$, and $X \in \Msigman$ is cofibrant in $\M$.  Then the map
\[\nicexy{U \tensoroversigman X \ar[r]^-{g \tensoroversigman X} & V \tensoroversigman X}\]
is a weak equivalence in $\M$.
\end{quote}
\begin{quote}
$(\filledstar)$ : Suppose $n \geq 1$ and $X \in \Msigman$ is cofibrant in $\M$.  Then the coinvariant $X_{\Sigma_n} \in \M$ is also cofibrant.
\end{quote}
\end{definition}

If $\M$ is a model category such that, an object in $\Msigman$ is $\Sigma_n$-projectively cofibrant if and only if it is cofibrant as an object in $\M$, then both $(\medstar)$ and $(\filledstar)$ hold. To verify this claim, observe that this extra condition makes $U$, $V$, and $X$ all $\Sigma_n$-projectively cofibrant above. In this setting, $(\filledstar)$ is true because $(-)_{\Sigma_n}$ is a left Quillen functor, hence takes projectively cofibrant objects to cofibrant objects. To verify $(\medstar)$ for $\M$, note that $(-)\otimes X$ preserves trivial cofibrations in $\Msigman$ \cite[Lemma 2.5.2]{bm06}. Consider the functor $F = (-) \otimes_{\Sigma_n} X = ((-)\otimes X)_{\Sigma_n}: \Msigman \to \M$. Because$(-)_{\Sigma_n}$ is left Quillen, $F$ takes trivial cofibrations between projectively cofibrant objects to weak equivalences, so by Ken Brown's Lemma \cite{hovey} (1.1.12), this functor takes all weak equivalences between projectively cofibrant objects to weak equivalences. Thanks to our assumption about $\M$, this suffices to verify $(\medstar)$. As a consequence of this observation, we have the following examples. Because these examples are rational settings, an object in $\Msigman$ is cofibrant if and only if it is cofibrant as an object in $\M$ \cite[Theorem 8.1.1]{white-yau}.

\begin{example}\label{star.examples}
Fix a field $k$ of characteristic zero. Conditions $(\medstar)$ and $(\filledstar)$ both hold in the model categories of:
\begin{itemize}
\item simplicial $k$-modules;
\item chain complexes, bounded or unbounded, of $k$-modules;
\item non-negatively graded cochain complexes of $k$-modules 
\item the category of $\Fin$-objects in $k$-modules.
\end{itemize}

The last two examples are required for the monoidal dual Dold-Kan correspondence discussed in Section \ref{sec:dualDold}. The category $\Fin$ has the same objects as the cosimplicial category $\Delta$, but its morphisms are all set maps. As discussed in \cite{cortinas},
the category of $\Fin$-objects in $k$-modules is closely related to cosimplicial $k$-modules, and passage from the classical dual Dold-Kan correspondence to the monoidal dual Dold-Kan correspondence does not change the homotopy theory.

\end{example}

The next definition is an equivariant version of Def. \ref{def:weak.symmetric.monoidal}(1).

\begin{definition}
\label{def:sharp}
Suppose $L : \M \adjoint \N : R$ is an adjunction between symmetric monoidal categories that are also model categories with $R$ lax symmetric monoidal.  Define the following condition.
\begin{quote}
$(\#)$ : Suppose $n \geq 1$, $W \in \Msigmanop$ is cofibrant in $\M$, and $X \in \Msigman$ is cofibrant in $\M$.  Then the map
\[\nicexy@C+.5cm{\bigl[L(W \otimes X)\bigr]_{\Sigma_n} \ar[r]^-{(L^2_{W,X})_{\Sigma_n}} & \bigl[LW \otimes LX\bigr]_{\Sigma_n}}\]
is a weak equivalence in $\N$, where $L^2_{W,X}$ is the comonoidal structure map of $L$ \eqref{comonoidal.map}.  
\end{quote}
\end{definition}

Note that $(\#)$ is equivalent to the assertion that the composite
\[\nicexy@C+.5cm{L\left(W \tensoroversigman X\right) \ar[d]_-{\cong} \ar[r] & LW \tensoroversigman LX\\
\bigl[L(W \otimes X)\bigr]_{\Sigma_n} \ar[r]^-{(L^2_{W,X})_{\Sigma_n}} & \bigl[LW \otimes LX\bigr]_{\Sigma_n} \ar[u]_-{=}}\]
is a weak equivalence in $\N$.  The isomorphism on the left comes from the fact that taking $\Sigma_n$-coinvariant is a colimit, and the left adjoint $L$ commutes with colimits.

\begin{example} \label{hash.examples}
Condition $(\#)$ holds when:
\begin{enumerate}
\item $L$ is the identity, in the case of rectification;
\item $L$ is strong symmetric monoidal;
\item  $L \dashv R$ is the Dold-Kan correspondence between simplicial $k$-modules and non-negatively graded chain complexes of $k$-modules for a field $k$ of characteristic zero;
\item $L \dashv R$ is the monoidal dual Dold-Kan correspondence between non-negatively graded cochain complexes of $k$-modules and $\Fin$-objects in $k$-modules, over a field $k$ of characteristic zero.
\end{enumerate}
\end{example}

This is trivial to verify for (1) and (2). For (3) and (4), we use that the model categories in question are in characteristic zero situations, just as in Example \ref{star.examples}. This means $W$ and $X$ are actually projectively cofibrant. The Quillen pair (3) is a \textit{weak monoidal Quillen pair} by \cite{ss03} (4.2, 4.3). This implies, together with the pushout product axiom \cite[Definition 4.2.1]{hovey}, that $L^2_{W,X}$ is a weak equivalence between projectively cofibrant objects. Now $(\#)$ follows from Ken Brown's lemma, applied to the left Quillen functor $(-)_{\Sigma_n}$. Lastly, $(\#)$ holds for (4) because the left adjoint is strong symmetric monoidal, as discussed in \cite{cortinas}.

\section{Colored Operads}
\label{sec:colored}

In this section we recall some results regarding colored operads that will be needed in later sections.

\subsection{Colors and Profiles}

Here we recall from \cite{jy2} some notations regarding colors that are needed to talk about colored objects.

\begin{definition}[Colored Objects]
\label{def:profiles}
Fix a set $\fC$, whose elements are called \textbf{colors}.
\begin{enumerate}
\item
A \emph{$\fC$-profile} is a finite sequence of elements in $\fC$, say,
\[\uc = (c_1, \ldots, c_m) = c_{[1,m]}\]
with each $c_i \in \fC$.  If $\fC$ is clear from the context, then we simply say \emph{profile}. The empty $\fC$-profile is denoted $\emptyset$, which is not to be confused with the initial object in $\calm$.  Write $|\uc|=m$ for the \emph{length} of a profile $\uc$.
\item
An object in the product category $\prod_{\fC} \calm = \calm^{\fC}$ is called a \emph{$\fC$-colored object in $\calm$}, and similarly for a map of $\fC$-colored objects.  A typical $\fC$-colored object $X$ is also written as $\{X_a\}$ with $X_a \in \calm$ for each color $a \in \fC$.
\item
Suppose $X \in \calmc$ and $c \in \fC$.  Then $X$ is said to be \emph{concentrated in the color $c$} if $X_d = \varnothing$ for all $c \not= d \in \fC$.
\item
Suppose $f : X \to Y \in \calm$ and $c \in \fC$.  Then $f$ is said to be \emph{concentrated in the color $c$} if both $X$ and $Y$ are concentrated in the color $c$.
\end{enumerate}
\end{definition}

Next we define the colored version of a $\Sigma$-object, also known as a symmetric sequence.

\begin{definition}[Colored Symmetric Sequences]
\label{def:colored-sigma-object}
Fix a set $\fC$ of colors.
\begin{enumerate}
\item
If $\ua = (a_1,\ldots,a_m)$ and $\ub$ are $\fC$-profiles, then a \emph{map} (or \emph{left permutation}) $\sigma : \ua \to \ub$ is a permutation $\sigma \in \Sigma_{|\ua|}$ such that
\[\sigma\ua = (a_{\sigma^{-1}(1)}, \ldots , a_{\sigma^{-1}(m)}) = \ub\]
This necessarily implies $|\ua| = |\ub| = m$.
\item
The \emph{groupoid of $\fC$-profiles}, with left permutations as the isomorphisms, is denoted by $\pofc$.  The opposite groupoid $\pofcop$ is regarded as the groupoid of $\fC$-profiles with \emph{right permutations}
\[\ua\sigma = (a_{\sigma(1)}, \ldots , a_{\sigma(m)})\]
as isomorphisms.
\item
The \emph{orbit} of a profile $\ua$ is denoted by $[\ua]$.  The maximal connected sub-groupoid of $\pofc$ containing $\ua$ is written as $\sigmabra$.  Its objects are the left permutations of $\ua$.  There is a decomposition
\begin{equation}
\label{pofcdecomp}
\pofc \cong \coprod_{[\ua] \in \pofc} \sigmabra,
\end{equation}
where there is one coproduct summand for each orbit $[\ua]$ of a $\fC$-profile.  By $[\ua] \in \pofc$ we mean that $[\ua]$ is an orbit in $\pofc$.
\item
Define the diagram category
\[\symseqcm = \calm^{\sigmacopc},\]
whose objects are called \emph{$\fC$-colored symmetric sequences}.  By the decomposition \eqref{pofcdecomp}, there is a decomposition
\[\symseqcm \cong 
\prod_{\dbrch \in \sigmacopc} \calm^{\sigmabrcopd},\]
where $\sigmabrcopd \cong \sigmaopc$.  
\item
For $X \in \symseqcm$, we write
\[X\singledbrc \in \calm^{\sigmabrcopd} \cong \calm^{\sigmabrcop}\]
for its $\dbrch$-component.  For $(\uc;d) \in \sigmacopc$ (i.e., $\uc$ is a $\fC$-profile and $d \in \fC$), we write
\[X\duc \in \calm\]
for the value of $X$ at $(\uc;d)$.
\end{enumerate}
\end{definition}

\begin{remark}
\label{soneobject}
In the one-colored case (i.e., $\fC = \{*\}$), for each integer $n \geq 0$, there is a unique $\fC$-profile of length $n$, usually denoted by $[n]$.  We have $\Sigma_{[n]} = \Sigma_n$, the symmetric group $\Sigma_n$ regarded as a one-object groupoid.  So we have
\[
\pofc = \coprod_{n \geq 0} \Sigma_n = \Sigma
\andspace
\symseqcm = \calm^{\sigmacopc} = \calm^{\sigmaop}.
\]
In other words, one-colored symmetric sequences are symmetric sequences (also known as $\Sigma$-objects and collections) in the usual sense.
\end{remark}

From now on, assume that $\fC$ is a fixed non-empty set of colors, unless otherwise specified.

\subsection{Colored Circle Product}

We will define $\fC$-colored operads as monoids with respect to the $\fC$-colored circle product.  To define the latter, we need the following definition.

\begin{definition}[Tensored over a Category]
\label{def:tensorover}
Suppose $\cald$ is a small groupoid, $X \in \calm^{\caldop}$, and $Y \in \calm^{\cald}$.  Define the object $X \otimes_{\cald} Y \in \calm$ as the colimit of the composite
\[
\nicexy{
\cald \ar[r]^-{\cong \Delta} 
& \caldop \times \cald \ar[r]^-{(X,Y)}
& \calm \times \calm \ar[r]^-{\otimes}
& \calm,
}\]
where the first map is the diagonal map followed by the isomorphism $\cald \times \cald \cong \caldop \times \cald$. The colimit $X \otimes_{\cald} Y$ coincides with the coend $\int^{d\in \cald} X_d \otimes Y_d$.
\end{definition}

We will mainly use the construction $\otimes_{\cald}$ when $\cald$ is the finite connected groupoid $\sigmabrc$ for some orbit $[\uc] \in \pofc$.

\begin{convention}
For an object $A \in \calm$, $A^{\otimes 0}$ is taken to mean $\tensorunit$, the $\otimes$-unit in $\calm$.
\end{convention}

\begin{definition}[Colored Circle Product]
\label{def:colored-circle-product}
Suppose $X,Y  \in \symseqcm$, $d \in \fC$, $\uc = (c_1,\ldots,c_m) \in \pofc$, and $[\ub] \in \pofc$ is an orbit.
\begin{enumerate}
\item
Define the object
\[Y^{\uc} \in \calm^{\pofcop} \cong \prod_{[\ub] \in \pofc} \calm^{\sigmabrbop}\]
as having the $[\ub]$-component
\[Y^{\uc}([\ub]) =
\coprod_{\substack{\{[\ub_j] \in \pofc\}_{1 \leq j \leq m} \,\mathrm{s.t.} \\
[\ub] = [(\ub_1,\ldots,\ub_m)]}} 
\Kan^{\sigmabrbop} 
\left[\bigotimes_{j=1}^m Y \cjbrbj\right] 
\in \calm^{\sigmabrbop}.
\]
The above left Kan extension is defined as
\[\nicexy{\prod_{j=1}^m \sigmabrbjop 
\ar[d]_-{\mathrm{concatenation}} 
\ar[rr]^-{\prod Y \binom{c_j}{-}} 
&& \calm^{\times m} \ar[d]^-{\otimes}
\\
\sigmabrbop \ar[rr]_-{\Kan^{\sigmabrbop}\left[\otimes Y(\vdots)\right]}^-{\mathrm{left ~Kan~ extension}}  && \calm.}\]
\item
By allowing left permutations of $\uc$ above, we obtain
\[
Y^{[\uc]} \in \calm^{\pofcop \times \sigmabrc} \cong \prod_{[\ub] \in \pofc} \calm^{\sigmabrbop \times \sigmabrc}\]
with components
\[Y^{[\uc]}([\ub]) \in \calm^{\sigmabrbop \times \sigmabrc}.\]
\item 
The \emph{$\fC$-colored circle product}
\[X \circ Y \in \symseqcm\]
is defined to have components
\[(X \circ Y)\singledbrb 
= \coprod_{[\uc] \in \pofc} 
X\singledbrc \tensorover{\sigmabrc} Y^{[\uc]}([\ub]) \in \calm^{\sigmaopb \times \{d\}},
\]
where the coproduct is indexed by all the orbits in $\pofc$, as $d$ runs through $\fC$ and $[\ub]$ runs through all the orbits in $\pofc$. The functor $X\circ -$ restricts to a functor on $\calmc$: given a $\fC$-colored object $A$, the $d$-colored entry of $X \circ A$ is equal to the coend

\[
(X \circ A)_d = \dint^{\uc \in \Sigma_{C}} X\duc \otimes A_{\uc}.
\]

Inside this coend, $X\duc$ is contravariant in $\uc$ and $A_{\uc}$ is covariant in $\uc$.
\end{enumerate}
\end{definition}

\begin{proposition}[$=$ 3.2.18 in \cite{white-yau}]
\label{circle-product-monoidal}
With respect to the $\fC$-colored circle product $\circ$, $\symseqcm$ is a monoidal category.
\end{proposition}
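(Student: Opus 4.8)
The plan is to establish that $\symseqcm$ is monoidal by exhibiting the unit object, the associativity isomorphism, and the coherence data, with the bulk of the work going into associativity. First I would identify the monoidal unit: it should be the $\fC$-colored symmetric sequence $\sI$ with $\sI\duc = \tensorunit$ when $\uc = (d)$ is the one-element profile consisting of the single color $d$ (with the trivial $\Sigma_1$-action), and $\sI\duc = \varnothing$ otherwise. Unwinding the definition of $\circ$, the coproduct $(X \circ \sI)\singledbrb = \coprod_{[\uc]} X\singledbrc \tensorover{\sigmabrc} \sI^{[\uc]}([\ub])$ collapses to the single term where $[\uc]$ has the same length as $[\ub]$ and, entry by entry, matches $\ub$; evaluating $\sI^{[\uc]}$ shows $X \circ \sI \cong X$ naturally, and similarly $\sI \circ X \cong X$ using the convention $A^{\otimes 0} = \tensorunit$ and the fact that $\sI^{\uc}([\ub])$ is a one-point indexing set's worth of $\tensorunit$'s. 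These unit checks are bookkeeping once the indexing is set up carefully.

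The heart of the proof is the associativity isomorphism $(X \circ Y) \circ Z \cong X \circ (Y \circ Z)$. The approach I would take is to expand both sides into iterated coproducts indexed by chains of orbits of profiles, using Definition \ref{def:colored-circle-product} twice, and to show the two resulting indexing diagrams are canonically isomorphic. Concretely, one side is indexed by an orbit $[\uc]$ together with a decomposition of $[\ub]$ into blocks $[\ub_j]$ matching the entries $c_j$ of $\uc$, and then a further decomposition of each $\ub_j$; the other side reorganizes the same data as: first an orbit $[\ue]$, a decomposition of $[\ub]$ into blocks indexed by the entries of $\ue$, and then a compatible system assembling $\uc$ and the intermediate profiles. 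The key structural input is that left Kan extension along concatenation of groupoids is associative and that $\otimes$ is associative and commutes with colimits in each variable (so it commutes with $\Kan$ and with $\tensorover{\sigmabrc}$); combined with the Fubini theorem for colimits, this lets one push the isomorphism through. I would organize this as a lemma identifying $(Y \circ Z)^{[\uc]}$ in terms of $Y^{[-]}$ and $Z^{[-]}$, analogous to how \cite{jy2} and \cite{white-yau} handle the one-colored case, and then the associativity of $\circ$ reduces to that identification plus associativity of $\tensorover{\sigmabrc}$.

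The main obstacle, as usual for circle products, will be bookkeeping the symmetric-group equivariance across the reindexing: when one regroups a profile $\ub$ that is first cut into $m$ blocks and then each block subdivided, versus cutting it directly into the total number of finer blocks, the two groupings differ by a block permutation, and one must check that the Kan extensions and the $\Sigma$-actions on the $Y^{[\uc]}([\ub])$ and $Z$-terms are compatible with this permutation — i.e., that the canonical comparison map is genuinely $\Sigma_{[\ub]}^{\smallop} \times \Sigma_{[\ud]}$-equivariant and hence descends to the $\tensorover{\sigmabrc}$ quotients. I expect to handle this by working orbit-by-orbit using the decomposition \eqref{pofcdecomp}, so that each $\Sigma$ that appears is a finite group and the coherence is a finite check, and by invoking the universal property of the iterated left Kan extension rather than manipulating elements. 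Finally, the pentagon and triangle coherence axioms follow because every isomorphism in sight is built canonically from associativity/unit constraints of $\otimes$ in $\calm$, associativity of Kan extension, and Fubini for colimits, all of which are themselves coherent; I would remark that this reduces the coherence of $(\symseqcm, \circ, \sI)$ to the coherence already present in $(\calm, \otimes, \tensorunit)$ rather than verifying the pentagon by hand. Since this is exactly Proposition 3.2.18 of \cite{white-yau}, I would in practice cite that source for the detailed verification and only sketch the unit and associativity isomorphisms here.
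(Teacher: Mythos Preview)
Your sketch is reasonable and in fact goes well beyond what the paper does: the paper provides no proof at all for this proposition, simply citing it as Proposition 3.2.18 of \cite{white-yau}. Your final remark that you would ``in practice cite that source for the detailed verification'' is exactly the approach the paper takes, so on that level you match the paper precisely. The outline you give for the unit and associativity isomorphisms is the standard one and is consistent with how \cite{white-yau} carries out the argument, so there is nothing to correct.
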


\begin{definition}
\label{def:colored-operad}
For a set $\fC$ of colors, a \emph{$\fC$-colored operad} in $\calm$ is a monoid \cite{maclane} (VII.3) in the monoidal category $(\symseqcm, \comp)$.
\end{definition}

\subsection{Algebras over Colored Operads}

\begin{definition}
\label{colored-operad-algebra}
Suppose $\sO$ is a $\fC$-colored operad.  The category of algebras over the monad \cite{maclane} (VI.2)
\[\sO \comp - : \calmc \to \calmc\]
is denoted by $\alg(\sO)$, whose objects are called \emph{$\sO$-algebras} in $\calm$.
\end{definition}

\begin{definition}
\label{def:asubc}
Suppose $A = \{A_c\}_{c\in \fC} \in \calm^{\fC}$ and $\uc = (c_1,\ldots,c_n) \in \pofc$ with orbit $[\uc]$.  Define the object
\[
A_{\uc} = \bigotimes_{i=1}^n A_{c_i} = A_{c_1} \otimes \cdots \otimes A_{c_n} \in \calm
\]
and the diagram $A_{\smallbrc} \in \calm^{\sigmabrc}$ with values
\[A_{\smallbrc}(\uc') = A_{\uc'}\]
for each $\uc' \in [\uc]$.  All the structure maps in the diagram $A_{\smallbrc}$ are given by permuting the factors in $A_{\uc}$.
\end{definition}

There is a free-forgetful adjoint pair
\[\nicexy{\calmc \ar@<2pt>[r]^-{\sO \comp -} 
& \alg(\sO) \ar@<2pt>[l]}\]
for each $\fC$-colored operad $\sO$.

\begin{proposition}[$=$ 4.2.1 in \cite{white-yau}]
\label{algebra-bicomplete}
Suppose $\sO$ is a $\fC$-colored operad in $\M$.  Then the category $\alg(\sO)$ has all small limits and colimits, with reflexive coequalizers and filtered colimits preserved and created by the forgetful functor $\alg(\sO) \to \calmc$.
\end{proposition}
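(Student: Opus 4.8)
The plan is to reduce the statement to the analogous, classical fact about monoids in a monoidal category and their algebras, using the identification of $\fC$-colored operads as monoids in $(\symseqcm,\comp)$ (Def. \ref{def:colored-operad}) and of $\sO$-algebras as algebras over the monad $\sO\comp-$ (Def. \ref{colored-operad-algebra}). Concretely, I would verify bicompleteness of $\alg(\sO)$ by the standard monadic argument: limits in a category of algebras over a monad are created by the forgetful functor $U\colon\alg(\sO)\to\calmc$ provided $\calmc$ has those limits, which it does since $\calm$ does (limits and colimits in a product category $\calm^{\fC}$ are computed colorwise). So the only real content is the existence of colimits in $\alg(\sO)$.

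For colimits, the key input is that the monad $T=\sO\comp-$ on $\calmc$ preserves reflexive coequalizers and filtered colimits. Granting this, a general theorem of Linton (see e.g. Borceux, or Barr--Wells, \emph{Toposes, Triples and Theories}, Thm.~9.3.9) applies: if $\calmc$ is cocomplete and the monad $T$ preserves reflexive coequalizers, then $\alg(T)$ is cocomplete, with coproducts built from coequalizers of free algebras and with reflexive coequalizers (and filtered colimits) computed in $\calmc$. More precisely, for reflexive coequalizers one checks directly that if $X_\bullet$ is a reflexive pair of $\sO$-algebras and $Q=\coequal(UX_\bullet)$ in $\calmc$, then applying $T$ to the reflexive coequalizer diagram again yields a coequalizer (since $T$ preserves these), so $Q$ inherits a unique $\sO$-algebra structure and is the coequalizer in $\alg(\sO)$; the same argument works verbatim for filtered colimits, giving the ``preserved and created by $U$'' clause. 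Coproducts (and hence all colimits, combined with reflexive coequalizers) are then obtained by the usual bar-type construction.

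Thus the one step that requires genuine work is showing that $\sO\comp-$ preserves reflexive coequalizers and filtered colimits on $\calmc$. Here I would unwind the formula for the circle product in Def.~\ref{def:colored-circle-product}: $(\sO\comp Y)\singledbrb$ is a coproduct over orbits $[\uc]$ of $\sO\singledbrc \tensorover{\sigmabrc} Y^{[\uc]}([\ub])$, and $Y^{[\uc]}([\ub])$ is itself assembled from coproducts, left Kan extensions (left adjoints, hence colimit-preserving), and finite tensor products $\bigotimes_{j} Y\cjbrbj$. Coproducts, colimits over the fixed finite groupoids $\sigmabrc$ (i.e. the $\tensorover{\sigmabrc}$ coends), and left Kan extensions all commute with filtered colimits and reflexive coequalizers, so the entire issue collapses to the statement that the $m$-fold tensor power functor $Y\mapsto Y^{\otimes m}$ (equivalently, each $\otimes$ in each variable) preserves reflexive coequalizers and filtered colimits in $\calm$. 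For filtered colimits this is immediate when $\otimes$ preserves them in each variable, which holds in the closed monoidal setting we have assumed (each $-\otimes Z$ is a left adjoint). For reflexive coequalizers, the standard fact is that in a monoidal category where $\otimes$ preserves reflexive coequalizers in each variable separately, it preserves them in all variables jointly, because a reflexive coequalizer remains one after tensoring and one can iterate variable by variable; this is the only place the ``reflexive'' hypothesis is essential (plain coequalizers need not be preserved by $\otimes$). Since $\calm$ is symmetric monoidal \emph{closed}, $-\otimes Z$ is a left adjoint for every $Z$, so it preserves \emph{all} colimits, and in particular reflexive coequalizers and filtered colimits; this settles the claim. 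Assembling these observations gives that $\sO\comp-$ preserves reflexive coequalizers and filtered colimits, and hence, via the Linton argument above, that $\alg(\sO)$ is bicomplete with reflexive coequalizers and filtered colimits created by $U$. (Alternatively, since this is quoted as $4.2.1$ of \cite{white-yau}, one may simply cite that source; the sketch above indicates the proof.)
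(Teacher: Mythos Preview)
Your argument is correct and follows the standard monadic route: limits are created by the forgetful functor, and cocompleteness follows from Linton's theorem once you verify that $\sO\comp-$ preserves reflexive coequalizers and filtered colimits, which in turn reduces (via the explicit formula for the circle product) to the closedness of $\M$ and the fact that reflexive coequalizers are preserved by bifunctors that preserve them separately in each variable.

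Note, however, that the paper does not actually give a proof of this proposition; it is simply quoted as Proposition~4.2.1 of \cite{white-yau} and stated without argument. So there is no ``paper's own proof'' to compare against here. Your final parenthetical already anticipates this: citing \cite{white-yau} is exactly what the paper does, and your sketch is precisely the argument one would expect to find there (and indeed the argument given in \cite{white-yau} follows the same outline you describe, ultimately appealing to the fact that $\sO\comp-$ preserves reflexive coequalizers and filtered colimits because $\otimes$ is closed).
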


\begin{definition}\label{lifted.right.adjoint}
Suppose $L : \M \adjoint \N : R$ is an adjunction between symmetric monoidal categories with $R$ lax symmetric monoidal.  Suppose $f : \sO \to R\sP$ is a map of $\fC$-colored operads in $\M$ with $\fC$ a set, $\sO$ a $\fC$-colored operad in $\M$, and  $\sP$ a $\fC$-colored operad in $\N$.  Here $R$ is applied entrywise to $\sP$;  according to \cite{jy2} (Theorem 12.11) $R\sP$ is a $\fC$-colored opeard in $\M$.  
\begin{enumerate}
\item Define an induced functor
\begin{equation}\label{R.alg}
\nicexy{\algo &  \algp \ar[l]_-{R}}
\end{equation}
as follows.  For a $\sP$-algebra $A$, apply $R$ entrywise to $A \in \ntoc$ to obtain $RA \in \mtoc$.  Then $RA$ becomes an $\sO$-algebra with structure map the composite
\[\nicexy{\sO\duc \otimes (RA)_{\uc} \ar[r] \ar[d]_-{(f,\Id)} & RA_d\\
R\sP\duc \otimes (RA)_{\uc} \ar[r]^-{R^2} & R\bigl(\sP\duc \otimes A_{\uc}\bigr) \ar[u]_-{R\lambda}}\]
for $d \in \fC$ and $\uc \in \pofc$.  The map
\[\nicexy{\sP\duc \otimes A_{\uc} \ar[r]^-{\lambda} & A_d}\]
is a $\sP$-algebra structure map of $A$, and $R^2$ is a repeated application of the lax monoidal structure map of $R$.  Note that if $R = \Id$, then the functor $R : \algp \to \algo$ is the restriction along $f$.
\item
By the Adjoint Lifting Theorem \cite{borceux} (4.5.6), the functor $R$ \eqref{R.alg} admits a left adjoint $\Lbar : \algo \to \algp$, which is in general \emph{not} $L$ entrywise.  The original adjoint pair $L \dashv R$ is related to the lifted adjoint pair $\Lbar \dashv R$ as follows:
\begin{equation}\label{lbar.ocomp.diagram}
\nicexy{\algo \ar@<2.5pt>[r]^-{\Lbar} \ar@<2.5pt>[d]^-{U}
& \algp \ar@<2.5pt>[l]^-{R} \ar@<2.5pt>[d]^-{U} \\
\mtoc \ar@<2.5pt>[r]^-{L} \ar@<2.5pt>[u]^-{\sO \comp -}  
& \ntoc \ar@<2.5pt>[l]^-{R} \ar@<2.5pt>[u]^-{\sP \comp -}}
\end{equation}
On each side, the vertical arrows form a free-forgetful adjunction. The right adjoint diagram is commutative, i.e., $UR = RU$.  By uniqueness the left adjoin diagram is also commutative, i.e., 
\begin{equation}\label{lbar.ocomp}
\Lbar\left(\sO \comp X\right) = \sP \comp (LX)
\end{equation}
in which $LX$ means $L$ is applied entrywise to $X \in \mtoc$.
\item For each $\sO$-algebra $A$, the unit $A \to R\Lbar A \in \algo$ of the adjunction, when regarded as a map in $\mtoc$, has an entrywise adjoint
\begin{equation}\label{comparison.map}
\nicexy{LUA \ar[r]^-{\chi_A} & U\Lbar A \in \ntoc}
\end{equation}
called the \emph{comparison map}.  We will usually write the comparison map as $LA \to \Lbar A$, omitting the forgetful functors from the notation. 
\end{enumerate}
\end{definition}

\begin{example}\label{comparison.initial}
For the initial $\sO$-algebra $\initialo = \left\{\sO\singledempty\right\}_{d \in \fC}$, we have
\[\Lbar\initialo = \initialp = \left\{\sP\singledempty\right\}_{d \in \fC}\]
because $\Lbar$ is a left adjoint. In this case, the comparison map $\chi_{\initialo}$ is entrywise the adjoint of $f$,
\[\nicexy{(L\initialo)_d = L\sO\singledempty \ar[r]^-{\fbar} & \sP\singledempty = (\Lbar\initialo)_d \in \N}\]
for $d \in \fC$.
\end{example}

\begin{example}\label{comparison.adjoint}
Suppose $A \in \algo$, $B \in \algp$, and $\varphi : \Lbar A \to B \in \algp$.  Then the adjoint of $\varphi$ is a map $\varphibar : A \to RB \in \algo$.  Considering $\varphibar$ as a map in $\mtoc$, its entrywise adjoint is the composite
\[\nicexy{LA \ar[r]^-{\chi_A} & \Lbar A \ar[r]^-{\varphi} & B \in \ntoc.}\]
This example will be important in the proof of Theorem \ref{main.theorem}.  In the cases of monoids and of $1$-colored non-symmetric operads, the comparison map appeared in \cite{ss03} (5.1) and \cite{muro14} (7-2), respectively. In the situation of rectification, when $L = R = Id$, $\Lbar = \varphi_!$ and $R = \varphi^*$.

\end{example}

\subsection{Filtration for Pushouts of Colored Operadic Algebras}

\begin{definition}
Suppose $X \in \symseqcm$, $d \in \fC$, and $[\ua], [\ub], [\uc]$ are orbits in $\sigmaofc$.  Define the diagram
\[X \singledbrabrc \in \calm^{\sigmabraop \times \sigmabrcop \times \{d\}}\]
as having the objects
\[X \singledbrabrc(\ua'; \uc') 
= X \singledaprimecprime \in \calm\]
for $\ua' \in [\ua]$ and $\uc' \in [\uc]$ and the structure maps of $X$.
\end{definition}

\begin{definition}[$\sO_A$ for $\sO$-algebras]
\label{oaalgebra}
Suppose $\sO$ is a $\fC$-colored operad and $A \in \alg(\sO)$.  Define $\sO_A \in \symseqcm$ as follows.  For $d \in \fC$ and orbit $[\uc] \in \sigmaofc$, define the component
\[\sO_A\singledbrc \in \calm^{\sigmabrcop \times \{d\}}\]
as the reflexive coequalizer of the diagram
\[\nicexy{\coprod\limits_{[\ua] \in \sigmaofc} 
\sO\singledbrabrc \tensorover{\sigmabra} (\sO \circ A)_{\smallbra}
\ar@<-3pt>[r]_-{d_0} \ar@<3pt>[r]^-{d_1}
& \coprod\limits_{[\ua] \in \sigmaofc} 
\sO\singledbrabrc \tensorover{\sigmabra} A_{\smallbra}
\ar@/_1.5pc/[l]}
\]
with
\begin{itemize}
\item
the coequalizer taken in $\calm^{\sigmabrcop \times \{d\}}$, 
\item
$d_0$ induced by the operadic composition on $\sO$,
\item
$d_1$ induced by the $\sO$-algebra action on $A$, and 
\item
the common section induced by $A \cong \scrI \circ A \to \sO \circ A$, where $\scrI$ is the unit for the $\fC$-colored circle product.
\end{itemize}
\end{definition}

\begin{proposition}[$=$ 5.1.1 in \cite{white-yau}]
\label{o-sub-empty}
Suppose $\sO$ is a $\fC$-colored operad, and $\varnothing$ is the initial $\sO$-algebra.  Then there is an isomorphism
\[\sO_{\varnothing} \cong \sO\]
in $\symseqcm$.
\end{proposition}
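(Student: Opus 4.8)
The plan is to unwind the definition of $\sO_A$ in the special case $A = \varnothing$, the initial $\sO$-algebra. Recall from Proposition~\ref{algebra-bicomplete} and the discussion after it that $\varnothing = \{\sO\singledempty\}_{d \in \fC} = \sO \circ \varnothing_{\calmc}$, i.e., the initial $\sO$-algebra is the free $\sO$-algebra on the initial $\fC$-colored object. So the first step is to substitute $A = \varnothing_{\calmc}$ (the initial object of $\calmc$, which is $\varnothing$ in each color) into the reflexive coequalizer of Definition~\ref{oaalgebra}. In the top corner of that coequalizer we get $\sO\singledbrabrc \tensorover{\sigmabra} (\sO \circ \varnothing_{\calmc})_{\smallbra}$ and in the bottom corner $\sO\singledbrabrc \tensorover{\sigmabra} (\varnothing_{\calmc})_{\smallbra}$.

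Next I would analyze the two objects $A_{\smallbra}$ and $(\sO \circ A)_{\smallbra}$ when $A = \varnothing_{\calmc}$. For the empty profile $[\ua] = \emptyset$, the Convention that $B^{\otimes 0} = \tensorunit$ gives $(\varnothing_{\calmc})_{\emptyset} = \tensorunit$, and $\sO\singledbrabrc$ with $[\ua] = \emptyset$ is just $\sO\singledbrc$ with a trivial $\Sigma_{\emptyset}$-action, so the $[\ua] = \emptyset$ summand of the bottom corner contributes exactly $\sO\singledbrc \tensorover{\Sigma_\emptyset} \tensorunit \cong \sO\singledbrc$. For any nonempty profile $[\ua]$, the object $(\varnothing_{\calmc})_{\ua} = \varnothing_{c_1} \otimes \cdots = \varnothing$ is the initial object of $\calm$ (since $-\otimes-$ preserves colimits, hence the initial object, in each variable because $\calm$ is closed monoidal), so that summand vanishes; similarly $\sO \circ \varnothing_{\calmc}$ restricted to the relevant profiles — here one uses that the free algebra $\sO \circ \varnothing_{\calmc}$ is concentrated appropriately and that $(\sO \circ \varnothing_{\calmc})\singledempty = \sO\singledempty$ — collapses the top corner correspondingly. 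The upshot is that both corners of the coequalizer reduce, after discarding the summands built on the initial object of $\calm$, to copies of $\coprod_{d,[\uc]} \sO\singledbrc$, and the two maps $d_0, d_1$ become equal (both reduce to the identity, using the operad unit axiom on the $d_1$ side and the unitality of operadic composition on the $d_0$ side). A reflexive coequalizer of a pair of equal maps is the identity on the target, giving $\sO_\varnothing\singledbrc \cong \sO\singledbrc$ for every $d$ and every orbit $[\uc]$, hence $\sO_\varnothing \cong \sO$ in $\symseqcm$.

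The main obstacle is bookkeeping rather than conceptual depth: one must carefully identify $(\sO \circ \varnothing_{\calmc})_{\smallbra}$, unwinding the colored circle product of Definition~\ref{def:colored-circle-product} applied to the initial $\fC$-colored object, and check that the summand-by-summand matching of $d_0$ and $d_1$ is compatible with the $\sigmabra$-coinvariants and with the left Kan extensions appearing in $Y^{[\uc]}$. The cleanest route is probably to observe that $\varnothing$ being the \emph{initial} $\sO$-algebra means the defining coequalizer presentation of $\sO_A$ (which, as in \cite{white-yau}, is set up precisely so that $\sO_A \circ (-)$ or the associated filtration computes pushouts $\varnothing \to \sO\circ K$) degenerates; alternatively, cite the general fact that for the initial algebra the bar-type resolution collapses. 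I would keep the argument short by invoking the functoriality already packaged in Proposition~\ref{algebra-bicomplete} (reflexive coequalizers are created by the forgetful functor) to reduce everything to a computation in $\calmc$, and then to $\calm$ color by color.
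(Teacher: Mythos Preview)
The paper does not give its own proof of this proposition: it is imported verbatim from \cite{white-yau} (5.1.1), so there is nothing to compare against here. I will therefore simply assess your argument.

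Your overall plan (unwind the reflexive coequalizer of Definition~\ref{oaalgebra} and watch it collapse) is the right one, but there is a genuine error in the execution. In the coequalizer defining $\sO_A\singledbrc$, the symbol $A$ denotes the underlying $\fC$-colored object of the initial $\sO$-algebra, \emph{not} the initial object of $\calmc$. As you yourself note, the initial $\sO$-algebra is $\sO \circ \varnothing_{\calmc}$, whose $a$-entry is $\sO\binom{a}{\varnothing}$, not $\varnothing$. So the bottom object of the coequalizer is
\[
\coprod_{[\ua] \in \sigmaofc} \sO\singledbrabrc \tensorover{\sigmabra} \Bigl(\bigotimes_i \sO\smallprof{$\binom{a_i}{\varnothing}$}\Bigr),
\]
and the summands with $[\ua] \neq \varnothing$ do \emph{not} vanish just because $\varnothing_\calm \otimes X = \varnothing_\calm$. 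Your substitution ``$(\varnothing_{\calmc})_{\smallbra}$'' in the bottom corner is therefore wrong, and the subsequent claim that the bottom object reduces to $\sO\singledbrc$ purely by initiality of $\varnothing_\calm$ fails. (The top object, by the same token, involves $\sO \circ \sO \circ \varnothing_\calmc$, not $\sO \circ \varnothing_\calmc$.)

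The repair is not hard once you use the correct objects. The map from the bottom object to $\sO\singledbrc$ you want is the operadic composition plugging each $\sO\smallprof{$\binom{a_i}{\varnothing}$}$ into $\sO\singledbrabrc$; it is split by the $[\ua]=\varnothing$ summand. The two parallel arrows $d_0,d_1$ then become the two maps induced by $\mu \circ \Id$ and $\Id \circ \mu$ on $\sO \circ \sO \circ \varnothing_\calmc$ (after tensoring with $\sO\singledbrabrc$ and matching summands), and associativity of the operad multiplication $\mu$ is exactly what makes them agree after composing with the plug-in map. This identifies the coequalizer with $\sO\singledbrc$. So the ``bookkeeping'' you flag as the main obstacle really is the content here, and it has to go through the operad structure on $\sO$, not just the closed monoidal structure of $\calm$.
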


\begin{proposition}[$=$ 5.1.3 in \cite{white-yau}]
\label{osuba-empty}
Suppose $\sO$ is a $\fC$-colored operad, $A \in \alg(\sO)$, and $d \in \fC$.  Then there is a natural isomorphism
\[\sO_A\singledempty \cong A_d\]
in $\calm$.
\end{proposition}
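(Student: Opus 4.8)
The plan is to identify the reflexive coequalizer that defines $\sO_A\singledempty$ with the $d$-component of the canonical presentation of $A$ as a coequalizer of free $\sO$-algebras, and then to use that this presentation is an \emph{absolute} (split) coequalizer, so that it is preserved by evaluation at the colour $d$.

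First I would unwind Definition~\ref{oaalgebra} when $[\uc] = [\varnothing]$.  The automorphism groupoid $\Sigma_{[\varnothing]}$ of the empty profile is trivial, so the coequalizer there is taken in $\calm$, and $\sO\singledbrabrc$ specialises for $[\uc] = [\varnothing]$ to $\sO\singledbra$ (concatenating a profile $\ua'$ with the empty profile does nothing).  Let $T = \sO \comp - \colon \calmc \to \calmc$ be the monad of Definition~\ref{colored-operad-algebra}, and let $\theta \colon TA \to A$ be the structure map of the $\sO$-algebra $A$.  Using the standard identification
\[\coprod_{[\ua] \in \sigmaofc} \sO\singledbra \tensorover{\sigmabra} B_{\smallbra} \;\cong\; (TB)_d\]
for a $\fC$-colored object $B$ --- applied with $B = A$ and with $B = TA$ --- the defining diagram of $\sO_A\singledempty$ becomes the $d$-component of the parallel pair $\mu_A,\ T\theta \colon TTA \to TA$, where $\mu$ is the monad multiplication (equivalently, the operadic composition of $\sO$).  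One checks that these two maps are precisely $d_0$ and $d_1$ of Definition~\ref{oaalgebra}, and that the prescribed common section $A \cong \scrI \circ A \to \sO \circ A$ becomes $T\eta_A$, with $\eta$ the unit of $T$ (so that indeed $\mu_A \circ T\eta_A = \id$ and $T\theta \circ T\eta_A = T(\theta \circ \eta_A) = \id$).

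Next I would recall the standard fact that, for any monad $T$ on a category and any $T$-algebra $(A,\theta)$, the map $\theta \colon TA \to A$ is a \emph{split} coequalizer of the pair $\mu_A,\ T\theta \colon TTA \to TA$, with splitting maps $\eta_{TA} \colon TA \to TTA$ and $\eta_A \colon A \to TA$; the identities to verify are $\theta \circ \mu_A = \theta \circ T\theta$ (associativity of $A$), $\theta \circ \eta_A = \id_A$ (unit of $A$), $\mu_A \circ \eta_{TA} = \id_{TA}$ (a unit axiom of $T$), and $T\theta \circ \eta_{TA} = \eta_A \circ \theta$ (naturality of $\eta$).  A split coequalizer is absolute, hence preserved by the (colimit-preserving) evaluation functor $\Ev_d \colon \calmc \to \calm$.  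Therefore $\sO_A\singledempty$, being by the previous paragraph the $d$-component of the coequalizer of $\mu_A,\ T\theta$, is canonically isomorphic to $A_d$; the isomorphism is natural in $A$ because all the maps involved are.

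I expect the bookkeeping in the first step to be the only genuine obstacle --- identifying the ``$\coprod_{[\ua]} \sO\singledbra \tensorover{\sigmabra} (-)$'' expression coming from Definition~\ref{oaalgebra} with $\bigl(T(-)\bigr)_d$, and matching the three structure maps $d_0$, $d_1$ and the common section with $\mu_A$, $T\theta$ and $T\eta_A$ --- but this is routine once the colored circle product formula is spelled out.  Alternatively, one may avoid the split-coequalizer remark altogether: the canonical presentation $TTA \rightrightarrows TA \to A$ is a \emph{reflexive} coequalizer in $\alg(\sO)$, hence preserved by the forgetful functor $\alg(\sO) \to \calmc$ by Proposition~\ref{algebra-bicomplete}, and evaluation at $d$ (a colimit-preserving functor on the product category $\calmc$) then yields the isomorphism.
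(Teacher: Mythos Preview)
The paper does not prove this proposition; it merely records it as a citation of \cite{white-yau} (5.1.3), so there is no ``paper's own proof'' to compare against. Your argument is correct and is essentially the standard one: when $[\uc] = [\varnothing]$ the defining reflexive coequalizer of Definition~\ref{oaalgebra} reduces, entrywise at $d$, to the Beck bar coequalizer $T^2A \rightrightarrows TA$ of the monad $T = \sO \comp -$, whose split coequalizer is $\theta_d : (TA)_d \to A_d$. Both the main route (split/absolute coequalizer) and your alternative (reflexive coequalizer preserved by the forgetful functor via Proposition~\ref{algebra-bicomplete}) are valid; the bookkeeping identification $\coprod_{[\ua]} \sO\singledbra \tensorover{\sigmabra} B_{[\ua]} \cong (\sO \comp B)_d$ is exactly the $[\ub] = [\varnothing]$ case of the circle product formula in Definition~\ref{def:colored-circle-product}, so there is no hidden difficulty there.
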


\begin{lemma}[$=$ 5.3.1 in \cite{white-yau}]
\label{osubdc}
Suppose $\sO$ is a $\fC$-colored operad, $d \in \fC$, and $[\uc] \in \sigmaofc$.  Then the functor
\[\sO_{(-)}\singledbrc : \alg(\sO) \to \calm^{\sigmabrcop \times \{d\}}\]
preserves reflexive coequalizers and filtered colimits.
\end{lemma}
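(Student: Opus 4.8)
The plan is to unwind Definition~\ref{oaalgebra} and then reduce the claim, by repeated use of the fact that colimits commute with colimits, to the assertion that a short list of elementary functors preserve reflexive coequalizers and filtered colimits; each of those is then essentially immediate.

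First I would note that $\sO_{(-)}\singledbrc$ is the composite of the functor from $\alg(\sO)$ to (reflexive pairs in $\calm^{\sigmabrcop \times \{d\}}$) sending an $\sO$-algebra $A$ to the natural reflexive pair $\coprod_{[\ua]} \sO\singledbrabrc \tensorover{\sigmabra} (\sO \circ A)_{\smallbra} \rightrightarrows \coprod_{[\ua]} \sO\singledbrabrc \tensorover{\sigmabra} A_{\smallbra}$ of Definition~\ref{oaalgebra} (the coproducts ranging over orbits $[\ua] \in \sigmaofc$; naturality of $d_0$, $d_1$, and of the common section in $A$ is clear from their descriptions), followed by levelwise passage to the reflexive coequalizer. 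Reflexive coequalizers in $\calm^{\sigmabrcop \times \{d\}}$ are computed pointwise, colimits in a category of reflexive pairs are computed levelwise, and colimits commute with colimits; hence it suffices to show that the two ``vertex'' functors $A \mapsto \coprod_{[\ua]} \sO\singledbrabrc \tensorover{\sigmabra} (\sO \circ A)_{\smallbra}$ and $A \mapsto \coprod_{[\ua]} \sO\singledbrabrc \tensorover{\sigmabra} A_{\smallbra}$ preserve reflexive coequalizers and filtered colimits. A coproduct over a fixed index set preserves all colimits, and for fixed $Z$ the functor $Z \tensorover{\sigmabra}(-)$ is a colimit over the finite groupoid $\sigmabra$ of the functors $W \mapsto Z(\sigma) \otimes W(\sigma)$, each $Z(\sigma) \otimes (-)$ being a left adjoint; so $Z \tensorover{\sigmabra}(-)$ preserves all colimits. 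Thus everything reduces to showing that $A \mapsto A_{\smallbra}$ and $A \mapsto (\sO \circ A)_{\smallbra}$, as functors $\alg(\sO) \to \calm^{\sigmabra}$, preserve reflexive coequalizers and filtered colimits.

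For $A \mapsto A_{\smallbra}$: by Proposition~\ref{algebra-bicomplete} reflexive coequalizers and filtered colimits in $\alg(\sO)$ are created by the forgetful functor to $\calmc = \prod_{\fC}\calm$, where such colimits are computed colorwise; hence each color-evaluation $A \mapsto A_a$ preserves them. Since colimits in the diagram category $\calm^{\sigmabra}$ are pointwise, it then suffices that each $A \mapsto A_{\ua'} = A_{a'_1} \otimes \cdots \otimes A_{a'_n}$ (for $\ua' \in [\ua]$) preserve reflexive coequalizers and filtered colimits. This is the one place a genuinely non-formal fact enters: the walking reflexive pair and every filtered category are \emph{sifted} (non-empty, with final diagonal), so colimits over them commute with finite products; combining this with the fact that $\otimes$ is cocontinuous separately in each variable (as $\calm$ is closed symmetric monoidal) shows that the $n$-fold tensor product---indeed any separately cocontinuous multivariable functor---preserves sifted colimits jointly. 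Hence $A \mapsto A_{\ua'}$, and therefore $A \mapsto A_{\smallbra}$, preserves reflexive coequalizers and filtered colimits.

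For $A \mapsto (\sO \circ A)_{\smallbra}$: the previous paragraph shows $X \mapsto X_{\smallbra} : \calmc \to \calm^{\sigmabra}$ carries colorwise-computed colimits to pointwise-computed ones, so it is enough to show $\sO \circ -$, as an endofunctor of $\calmc$, preserves reflexive coequalizers and filtered colimits (the forgetful functor $\alg(\sO) \to \calmc$ creating them). Unwinding Definition~\ref{def:colored-circle-product} for $X \in \calmc$ regarded as a symmetric sequence concentrated in arity $0$ gives $(\sO \circ X)_d = \coprod_{[\uc] \in \pofc} \sO\singledbrc \tensorover{\sigmabrc} X_{\smallbrc}$ in each color $d$, which is built from color-evaluations, iterated tensor products, coends over the finite groupoids $\sigmabrc$, and a coproduct over orbits. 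By exactly the reasoning already used---siftedness together with separate cocontinuity of $\otimes$ for the tensors, ``colimits commute with colimits'' for the coends and coproduct, and colorwise computation of the relevant colimits in $\calmc$---this preserves reflexive coequalizers and filtered colimits. Composing with $(-)_{\smallbra}$ completes the reduction, and with it the proof. I expect the only real difficulty to be organizing the nested colimits cleanly---a reflexive coequalizer of a coproduct of coends of iterated tensor products of colorwise colimits---so that the single non-formal input, namely that the tensor product of reflexive-coequalizer (resp. filtered) diagrams computes the tensor product of their colimits, is isolated as the only thing one actually checks; all remaining steps are formal instances of left adjoints preserving colimits or of the interchange of colimits. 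One should also be mildly careful that the ``$\sO \circ A$'' in Definition~\ref{oaalgebra} is the circle product with $A$ placed in arity zero, so that the pertinent identity is the free-$\sO$-algebra formula rather than the general circle-product formula.
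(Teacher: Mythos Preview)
The paper does not prove this lemma; it is merely quoted from \cite{white-yau} (5.3.1), so there is no in-paper argument to compare against. Your proof is correct and is essentially the standard one: reduce via ``colimits commute with colimits'' to the claim that iterated tensor products of underlying objects preserve reflexive coequalizers and filtered colimits, and then invoke siftedness of the indexing categories together with separate cocontinuity of $\otimes$ (using closedness of $\calm$) to pass the colimit through the tensor factors. The only point worth flagging is terminological: when you say ``the walking reflexive pair is sifted,'' be sure you mean the shape that includes the common section as a morphism (equivalently, $\Delta^{\smallop}_{\leq 1}$), since the bare parallel-pair category is not sifted; your parenthetical ``non-empty, with final diagonal'' shows you have the right notion in mind, but it is the presence of the section that makes the diagonal final, and this is exactly the ``$3 \times 3$'' argument one finds in, e.g., Fresse or Rezk.
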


\begin{definition}
\label{def:q-construction}
Suppose $i : X \to Y \in \calmc$ is concentrated at a single color $c \in \fC$ (so $X_b = Y_b = \varnothing$ whenever $b \not= c$) and $t \geq 1$. Define $i^{\boxprod t}: Q_{t-1}^t \to Y^{\otimes t}$ to be the $t$-fold iterated pushout product of $i$ with itself. The object $Q_{t-1}^t$ is the colimit of the punctured $t$-dimensional cube formed by words of length $t$ in the symbols $X$ and $Y$, but omitting the word $Y^{\otimes t}$. The morphism $i^{\boxprod t}$ has a natural action of $\Sigma_t$ by permuting the letters in the words.
\end{definition}

The object $Q_{t-1}^t$ is part of a filtration of $Y^{\otimes t}$ \cite{white-yau} (4.3.15)  that was essential to the proof of the following two results.

\begin{proposition}[$=$ 4.3.16 in \cite{white-yau}]
\label{free-pushout-filtration}
Suppose $\sO$ is a $\fC$-colored operad, $A \in \alg(\sO)$, $i : X \to Y \in \calmc$ is concentrated at a single color $c \in \fC$, and
\[\nicexy{\sO \circ X \ar[d]_-{\id \circ i} \ar[r]^-{f} 
& A \ar[d]^-{j} \\
\sO \circ Y \ar[r] & A_{\infty}}
\]
is a pushout in $\alg(\sO)$.  Then the map $j \in \mtoc$ factors naturally into a countable composition
\[\narrowxy{A = A_0 \ar[r]^-{j_1} & A_1 \ar[r]^-{j_2} & A_2 \ar[r]^-{j_3} & \cdots \ar[r] & A_{\infty} \in \mtoc}\]
such that, for each color $d \in \fC$ and $t \geq 1$, the $d$-colored entry of $j_t$ is inductively defined as the pushout in $\calm$
\begin{equation}
\label{one-colored-jt-pushout}
\nicexy{\sO_A \singledbrtc \tensorover{\Sigma_t} Q^{t}_{t-1}
\ar[d]_-{\id \tensorover{\Sigma_t} i^{\boxprod t}} \ar[r]^-{f^{t-1}_*} 
& (A_{t-1})_d \ar[d]^-{j_t} \\
\sO_A \singledbrtc \tensorover{\Sigma_t} Y^{\otimes t} \ar[r]_-{\xi_{t}} 
&  (A_t)_d}
\end{equation}
with $f^{t-1}_*$ induced by $f$ and $tc = (c,\ldots,c)$ with $t$ copies of $c$.
\end{proposition}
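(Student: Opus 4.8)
The plan is to run the standard ``$Q$-filtration'' analysis of pushouts in categories of algebras over an operad (as for monoids in \cite{ss03} and for operads in \cite{fresse-book}), adapted to the $\fC$-colored setting, by first rewriting the pushout $A_\infty$ as a reflexive coequalizer that is computed in $\calmc$. \emph{Step 1 (coproduct formula).} I would first compute the underlying $\fC$-colored object of $A \amalg (\sO \circ Z)$ in $\alg(\sO)$ for $Z \in \calmc$ concentrated in the color $c$. Writing $A$ as the canonical reflexive coequalizer $\sO \circ \sO \circ A \rightrightarrows \sO \circ A$ of free $\sO$-algebras, and using that $(-)\amalg(\sO\circ Z)$ preserves reflexive coequalizers (these are connected colimits), that $\sO\circ(-)$ preserves coproducts, and that the forgetful functor to $\calmc$ creates reflexive coequalizers (Proposition~\ref{algebra-bicomplete}), one presents $A \amalg (\sO \circ Z)$ as the reflexive coequalizer of $\sO \circ (\sO \circ A \amalg Z) \rightrightarrows \sO \circ (A \amalg Z)$ computed in $\calmc$. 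Since $\circ$ and $\otimes$ preserve colimits in each variable and $Z$ is concentrated in $c$, the $d$-colored entry of $\sO\circ(A\amalg Z)$ regroups according to the number $t$ of its inputs occupied by $Z$; comparing the coefficient of $Z^{\otimes t}$ with the reflexive coequalizer defining $\sO_A\singledbrtc$ in Definition~\ref{oaalgebra} yields a natural isomorphism
\[
\bigl(A \amalg (\sO \circ Z)\bigr)_d \;\cong\; \coprod_{t \geq 0} \sO_A\singledbrtc \tensorover{\Sigma_t} Z^{\otimes t},
\]
whose $t=0$ term is $A_d$ by Proposition~\ref{osuba-empty}. Taking $Z = Y$ and $Z = X \amalg Y$ gives the underlying objects of $A\amalg(\sO\circ Y)$ and of $\sO\circ X\amalg\bigl(A\amalg\sO\circ Y\bigr)\cong A\amalg\sO\circ(X\amalg Y)$.

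\emph{Step 2 (pushout as a reflexive coequalizer).} The pushout $A_\infty$ in $\alg(\sO)$ is the coequalizer of the algebra maps $u,v \colon \sO\circ X \rightrightarrows A\amalg(\sO\circ Y)$ induced by $f$ and by $\id\circ i$, each followed by the appropriate coproduct inclusion. This pair is not reflexive, but $A_\infty$ is equally the coequalizer of the \emph{reflexive} pair $(u,\id),(v,\id)\colon \sO\circ X\amalg\bigl(A\amalg\sO\circ Y\bigr)\rightrightarrows A\amalg\sO\circ Y$ with common section the coproduct inclusion, hence is computed in $\calmc$. Substituting the coproduct formulas of Step~1, the $d$-colored entry of $A_\infty$ is the coequalizer of two maps from $\coprod_t \sO_A\singledbrtc\tensorover{\Sigma_t}(X\amalg Y)^{\otimes t}$ to $\coprod_t \sO_A\singledbrtc\tensorover{\Sigma_t}Y^{\otimes t}$. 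A direct computation identifies them: the map from $v$ is induced by $(i,\id_Y)\colon X\amalg Y\to Y$ in each slot, preserving the degree $t$; the map from $u$ carries the summand with $k$ inputs from $X$ to degree $t-k$ by plugging the images in $A_c$ of those inputs (under the adjoint $X\to A$ of $f$) into $k$ of the free $c$-colored slots. In particular both maps respect the filtration by degree-$\le t$ sub-coproducts.

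\emph{Step 3 (the filtration).} Filter both the source and the target of the coequalizer of Step~2 by their degree-$\le t$ sub-coproducts; since the coequalizer commutes with this filtered colimit, letting $A_t$ be the coequalizer at stage $t$ gives $A_0 = A$, $\colim_t A_t = A_\infty$, and comparison maps $j_t\colon A_{t-1}\to A_t$, and it remains to analyze their $d$-colored entries. By the analysis of Step~2, $A_t$ is obtained from $A_{t-1}$ by adjoining the degree-$t$ piece $\sO_A\singledbrtc\tensorover{\Sigma_t}Y^{\otimes t}$, glued along the $\Sigma_t$-object $Q^t_{t-1}$ (with its map to $Y^{\otimes t}$) cut out by the words in $X$ and $Y$ having at least one $X$-letter, which the $u$-map identifies with material already in $A_{t-1}$: indeed $Q^t_{t-1}$ is the colimit over the poset of proper subsets $S\subsetneq\{1,\dots,t\}$ of the tensor products with $Y$ in the slots of $S$ and $X$ in the remaining slots, with connecting maps induced by $i$, and this colimit is precisely what the inductive pushout presentation of $Q^t_{t-1}$ in Definition~\ref{def:q-construction} computes, with $Q^t_{t-1}\to Q^t_t = Y^{\otimes t}$ the iterated pushout-product $i^{\boxprod t}$. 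This is exactly the pushout square \eqref{one-colored-jt-pushout}, with $f^{t-1}_*$ the ``$X$-absorption'' map of Step~2; naturality in $i$ and in the pushout datum is evident from the constructions.

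\emph{Main obstacle.} The conceptual skeleton is short, and the only genuinely technical part is the combinatorial bookkeeping in Steps~1 and~3. In Step~1 one must check that regrouping $\sO\circ(A\amalg Z)$ by the number of $Z$-inputs is $\Sigma_t$-equivariantly correct and that the $\sO\circ A$-resolution collapses the $A$-occupied slots to exactly $\sO_A\singledbrtc$; this requires unwinding the colored left Kan extensions of Definition~\ref{def:colored-circle-product} carefully, since an orbit $[\ub]$ of a composite profile does not decompose as a product of orbits of the pieces. In Step~3 one must identify the ``at least one $X$'' part of $Y^{\otimes t}$ with $Q^t_{t-1}$ in $\M^{\Sigma_t}$; this is exactly where the fattened $Q$-construction is unavoidable, because $i$ is not assumed to be a cofibration and $X,Y$ are not assumed cofibrant, so $Q^t_{t-1}$ cannot simply be taken as a literal subobject of $Y^{\otimes t}$. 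Both points are routine in the one-colored cofibrant setting but must be redone here for arbitrary $\fC$ with no cofibrancy hypotheses. (Alternatively, once one knows that $\sO_A$ is a $\fC$-colored operad with $\alg(\sO_A)\simeq\alg(\sO)_{A/}$, the statement reduces to the case $A=\varnothing$ applied to the operad $\sO_A$ together with Proposition~\ref{o-sub-empty}; but this relocates rather than removes the combinatorial content.)
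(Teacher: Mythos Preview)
The paper does not contain a proof of this proposition: it is imported verbatim as ``$=$ 4.3.16 in \cite{white-yau}'' and stated without argument, so there is no in-paper proof to compare your proposal against. That said, your outline is the standard argument one expects in \cite{white-yau} (and in the antecedents \cite{ss}, \cite{harper-jpaa}, \cite{fresse-book} that you cite): compute the coproduct $A\amalg(\sO\circ Z)$ via the enveloping symmetric sequence $\sO_A$, realize the pushout as a reflexive coequalizer so it is created in $\calmc$, and filter by the number of $Y$-inputs to extract the $Q^t_{t-1}$ pushout at each stage. Your identification of the two genuine bookkeeping obstacles --- the equivariant regrouping in Step~1 over colored profiles, and the identification of the ``at least one $X$'' locus with $Q^t_{t-1}$ without any cofibrancy hypotheses --- is accurate, and your parenthetical reduction to the case $A=\varnothing$ via $\alg(\sO_A)\simeq\alg(\sO)_{A/}$ is exactly the shortcut used in the one-colored literature.
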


\begin{proposition}[$=$ 5.3.2 in \cite{white-yau}]
\label{o-ainfinity}
Suppose $\sO$ is a $\fC$-colored operad, $A \in \alg(\sO)$, $i : X \to Y \in \calmc$ is concentrated in one color $b \in \fC$, and
\[\nicexy{\sO \circ X \ar[d]_-{\id \circ i} \ar[r]^-{f} & A \ar[d]^-{j}
\\ \sO \circ Y \ar[r] & A_{\infty}}
\]
is a pushout in $\alg(\sO)$.  Suppose $d \in \fC$ and $[\uc] \in \sigmaofc$.  Then the map $\sO_{j} \in \calm^{\sigmabrcop \times \{d\}}$ factors naturally into a countable composition
\[\narrowxy{\osuba\singledbrc = \osubazero \singledbrc \ar[r]^-{j_1}
& \osubaone \singledbrc \ar[r]^-{j_2}
& \osubatwo \singledbrc \ar[r]^-{j_3}
& \cdots \ar[r] & \osubainfinity \singledbrc}\]
in $\calm^{\sigmabrcop \times \{d\}}$ in which each $j_t$ for $t \geq 1$ is defined inductively as the pushout
\begin{equation}
\label{osubjt}
\nicexy{
\osuba \singledbrtbbrc \tensorover{\Sigma_t} Q^{t}_{t-1} 
\ar[d]_-{\id \tensorover{\Sigma_t} i^{\boxprod t}} \ar[r]^-{f_*}
&  \osubatminusone\singledbrc \ar[d]^-{j_t}
\\ \osuba \singledbrtbbrc \tensorover{\Sigma_t} Y^{\otimes t}
\ar[r]^-{\xi_t} & \osubat \singledbrc}
\end{equation}
in $ \calm^{\sigmabrcop \times \{d\}}$, where $tb = (b,\ldots,b)$ with $t$ copies of $b$.
\end{proposition}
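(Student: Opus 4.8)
The plan is to follow the blueprint of Proposition~\ref{free-pushout-filtration}, of which the present statement is the ``general profile'' upgrade: taking $[\uc] = \varnothing$ and using Proposition~\ref{osuba-empty} to identify $\osubat\singledempty$ with $(A_t)_d$ recovers exactly that result. The two structural inputs I would rely on are the reflexive-coequalizer presentation of $\osubainfinity\singledbrc$ from Definition~\ref{oaalgebra}, and Lemma~\ref{osubdc}, which says $\sO_{(-)}\singledbrc$ commutes with the reflexive coequalizers and filtered colimits out of which $A_\infty$ is built. By Proposition~\ref{free-pushout-filtration}, the underlying $\fC$-colored object of $A_\infty$ is the filtered colimit of a countable tower $A = A_0 \to A_1 \to \cdots$ in which only the $b$-colored entry changes, and each $(A_t)_b$ is obtained from $(A_{t-1})_b$ by a pushout controlled by the cube-construction $Q^t_{t-1}$ of Definition~\ref{def:q-construction}. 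Feeding this filtration into the coequalizer formula for $\osubainfinity\singledbrc$ reduces the whole problem to identifying, one layer at a time, the cells newly attached to $\osubatminusone\singledbrc$ as $t$ increases.

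The combinatorial heart is this layer identification, which I would carry out exactly as in the one-colored case. Expanding $\osubainfinity\singledbrc$ as the quotient of $\coprod_{[\ua]} \sO\singledbrabrc \tensorover{\sigmabra} (A_\infty)_{\smallbra}$ by the relation encoding the $\sO$-action, one splits each tensor factor $(A_\infty)_{a_i'}$ of $(A_\infty)_{\smallbra}$ according to whether $a_i' \ne b$ (inert factor) or $a_i' = b$ (where $(A_\infty)_b = \colim_t (A_t)_b$), and grades by the total number $t$ of ``genuinely new'' $b$-inputs, i.e.\ those filled from $Y$ rather than from $A$ --- the coequalizer relation together with $f : \sO \circ X \to A$ is what lets one absorb any $X$-contribution back into $A$. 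The stage-$t$ subquotient is then built from an operation of $\sO$ with output $d$, a block of inputs forming the open profile $[\uc]$, a block of $t$ inputs colored $b$ receiving a factor of $Y^{\otimes t}$ relative to $Q^t_{t-1}$, and all remaining inputs absorbed into $A$; permuting the $t$ new inputs produces the $\tensorover{\Sigma_t}$, and collecting the operations ``with $t$ new $b$-inputs and open profile $[\uc]$, everything else in $A$'' is precisely $\osuba\singledbrtbbrc$. This is the pushout square~\eqref{osubjt} defining $j_t$; naturality in $A$ and in the attaching map $f$ is then formal, and passing to the colimit over $t$ --- legitimate since the tower of $A_t$'s is exhaustive --- identifies $\colim_t \osubat\singledbrc$ with $\osubainfinity\singledbrc$ and the resulting composite with $\sO_{j}$.

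The step I expect to be the main obstacle is making the layer identification rigorous: one must check that ``separating out the $t$ slots colored $b$ that are hit by new cells'' is simultaneously compatible with the left $\sigmabrcop$-action on the open $[\uc]$-inputs, with the right $\sigmabra$-actions and the $\tensorover{\sigmabra}$-constructions of Definition~\ref{def:tensorover}, and with the coequalizer relation, so that the stage-$t$ coefficient object comes out to be exactly $\osuba\singledbrtbbrc$ with no double counting and nothing omitted. This is a lengthy bookkeeping argument inside the colored symmetric-sequence formalism of Definition~\ref{def:colored-sigma-object} together with the standard filtration manipulations for the $Q$-construction, entirely parallel to the proof of Proposition~\ref{free-pushout-filtration} but carrying the extra profile $[\uc]$ throughout; it is the part that would occupy the bulk of a full write-up, whereas the preservation properties, the naturality statements, and the final colimit identification are formal consequences of Lemma~\ref{osubdc} and Proposition~\ref{free-pushout-filtration}.
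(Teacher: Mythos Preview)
The paper does not contain its own proof of this proposition: it is quoted verbatim from \cite{white-yau} (their Proposition~5.3.2), as the bracketed label indicates, and is used here as a black box in the proof of Proposition~\ref{loa.to.pla}. There is therefore no proof in the present paper to compare your proposal against.

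That said, your sketch is the expected one and matches the strategy behind the cited result: it is the same filtration argument as Proposition~\ref{free-pushout-filtration} (which is \cite{white-yau} 4.3.16), upgraded from the entry $\singledempty$ to a general entry $\singledbrc$ by carrying the open profile $[\uc]$ through the coequalizer presentation of Definition~\ref{oaalgebra}. Your identification of the combinatorial bookkeeping---separating the $t$ new $b$-inputs while keeping the $\sigmabrcop$-equivariance and the coequalizer relation straight---as the place where the real work lies is accurate; the remaining steps (Lemma~\ref{osubdc} for the colimit identification, naturality) are indeed formal. If you want to see the details written out, they are in \cite{white-yau}.
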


\subsection{Semi-Model Structures on Algebras over Entrywise Cofibrant Operads}

\begin{definition}
\label{def:club}
Suppose $\calm$ is a symmetric monoidal category and is a model category.  Define the following condition.
\begin{quote}
$(\clubsuit)$ : For each $n \geq 1$ and $X \in \calm^{\sigmaop_n}$ that is cofibrant in $\calm$, if $f$ is a (trivial) cofibration, then so is
$X \tensorover{\Sigma_n} f^{\boxprod n}.$
\end{quote}
The condition $(\clubsuit)$ for cofibrations will be referred to as $\clubcof$, and the condition for trivial cofibrations as $\clubacof$.  So $(\clubsuit) = \clubcof + \clubacof$.
\end{definition}

\begin{example}
As in Example \ref{star.examples}, condition $(\clubsuit)$ holds whenever cofibrancy in $\Msigman$ coincides with cofibrancy in $\M$.  In particular, it holds in:
\begin{itemize}
\item simplicial modules over a characteristic $0$ field;
\item chain complexes, bounded or unbounded, over a characteristic $0$ field
\end{itemize}
\end{example}

\begin{theorem}[$=$ 6.2.3 in \cite{white-yau}]
\label{theorem623}
Suppose 
$\calm$ is a cofibrantly generated monoidal model category satisfying $(\clubsuit)$.  Then for each entrywise cofibrant $\fC$-colored operad $\sO$ in $\calm$, the category $\alg(\sO)$ admits a cofibrantly generated \textbf{semi}-model structure over $\mtoc$  such that the weak equivalences and fibrations are created in $\calm$.  Moreover:
\begin{enumerate}
\item
If $j : A \to B \in \alg(\sO)$ is a cofibration with $A$ cofibrant in $\alg(\sO)$, then the underlying map of $j$ is entrywise a cofibration.
\item
Every cofibrant $\sO$-algebra is entrywise cofibrant in $\calm$.
\end{enumerate}
\end{theorem}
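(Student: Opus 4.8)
The plan is to transfer a semi-model structure along the free--forgetful adjunction $\sO \circ - : \calmc \adjoint \alg(\sO) : U$. Declare a map in $\alg(\sO)$ to be a \emph{weak equivalence} or \emph{fibration} exactly when its underlying map in $\calmc$ is one (here $\calmc = \calm^{\fC}$ carries the product model structure, which is cofibrantly generated and monoidal, with generating cofibrations $I_{\fC}$, resp.\ trivial cofibrations $J_{\fC}$, the maps concentrated at a single color $c$ whose $c$-entry is a generating (trivial) cofibration of $\calm$), define \emph{cofibrations} by the left lifting property against trivial fibrations, and take $\sO \circ I_{\fC}$ and $\sO \circ J_{\fC}$ as candidate generating sets. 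Since $\sO \circ -$ is a left adjoint and $U$ creates filtered colimits and reflexive coequalizers (Proposition \ref{algebra-bicomplete}), the smallness hypotheses transfer, the small object argument runs in $\alg(\sO)$, and $U$ preserves colimits over non-empty ordinals as a semi-model category requires.

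The technical heart is a propagation statement proved entirely at the level of $\calmc$ and the circle product, before any model structure on $\alg(\sO)$ is invoked. \textbf{Claim.} If $A \in \alg(\sO)$ has entrywise cofibrant underlying object and $\sO_A$ is entrywise cofibrant, and $A \to A_{\infty}$ is the pushout in $\alg(\sO)$ of $\id \circ i$ for some $i \in I_{\fC}$ (resp.\ $i \in J_{\fC}$) along some map, then $A \to A_{\infty}$ has an entrywise-cofibration (resp.\ entrywise trivial cofibration) underlying map and both $A_{\infty}$ and $\sO_{A_{\infty}}$ remain entrywise cofibrant. To see this, invoke the filtration of Proposition \ref{free-pushout-filtration}: the $d$-colored entry of $A \to A_{\infty}$ is a countable composition of pushouts of $\id \tensorover{\Sigma_t} i^{\boxprod t}$, where $i^{\boxprod t} : Q^t_{t-1} \to Y^{\otimes t}$ is an iterated pushout-product, hence a cofibration by the pushout product axiom (and a trivial cofibration when $i$ is). Condition $\clubcof$ (resp.\ $\clubacof$), applied with $X = \sO_A\singledbrtc$ entrywise cofibrant, makes each $\id \tensorover{\Sigma_t} i^{\boxprod t}$ a cofibration (resp.\ trivial cofibration) in $\calm$; transfinite composition preserves these. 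For $\sO_{A_{\infty}}$ one runs the parallel filtration of Proposition \ref{o-ainfinity} in exactly the same way. Starting from $\initialo$, whose underlying object and whose $\sO_{(-)}$ are $\sO_{\varnothing} \cong \sO$, entrywise cofibrant by hypothesis (Proposition \ref{o-sub-empty}), the Claim propagates along every relative $\sO \circ I_{\fC}$- or $\sO \circ J_{\fC}$-cell complex out of $\initialo$. This already yields conclusions (2) and (1): a cofibrant $\sO$-algebra is a retract of such an $\sO \circ I_{\fC}$-cell complex, so is entrywise cofibrant; and a cofibration $j$ with cofibrant domain $A$ is, by the retract argument, a retract under $A$ of an $\sO \circ I_{\fC}$-cell complex out of $A$, whose underlying map is an entrywise cofibration since $\sO_A$ is entrywise cofibrant by (2).

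With the Claim available, the semi-model axioms follow. The 2-out-of-3 and retract axioms and the closure of fibrations and trivial fibrations under pullback are inherited from $\calmc$ since $U$ creates them; the initial object is cofibrant by the retract axiom; cofibrations have the left lifting property against trivial fibrations by definition. Every map factors as a cofibration followed by a trivial fibration by the small object argument on $\sO \circ I_{\fC}$ (an $\sO \circ I_{\fC}$-cell map is always a cofibration, so no cofibrancy is needed). For a map $j : A \to B$ with $A$ cofibrant, the small object argument on $\sO \circ J_{\fC}$ factors $j = (A \to C \to B)$ with $A \to C$ a relative $\sO \circ J_{\fC}$-cell complex and $C \to B$ a fibration; by the Claim (using that $A$, hence $\sO_A$, is entrywise cofibrant) $A \to C$ is an entrywise trivial cofibration, in particular a trivial cofibration, so this is the required factorization. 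Finally, a trivial cofibration $j : A \to B$ with $A$ cofibrant has the left lifting property against fibrations: factor $j = (A \to C \to B)$ as just above; $A \to C$ is a weak equivalence by the Claim and $j$ is one by hypothesis, so $C \to B$ is a trivial fibration by 2-out-of-3; since $j$ is a cofibration it lifts against $C \to B$, exhibiting $j$ as a retract over $B$ of the relative $\sO \circ J_{\fC}$-cell complex $A \to C$, which has the left lifting property against fibrations (pushouts and transfinite compositions of maps in $\sO \circ J_{\fC}$, each of which has this property by the adjunction, as $J_{\fC}$-maps lift against $U$-fibrations); hence so does $j$. Cofibrant generation is then immediate from the constructions.

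The main obstacle, and the point demanding the most care, is the bootstrapping between conclusion (2) and the existence of the factorizations: producing cofibrant $\sO$-algebras requires the $\sO \circ I_{\fC}$-factorization, whose good behaviour (entrywise cofibrancy) is governed by entrywise cofibrancy of $\sO_A$, which is only known for $A$ already built as a cell complex out of $\initialo$. This is precisely why the Claim must be proved first, purely combinatorially from the explicit pushout filtrations (Propositions \ref{free-pushout-filtration} and \ref{o-ainfinity}) together with the two halves $\clubcof$ and $\clubacof$ of $(\clubsuit)$, and only afterwards fed into the axiom check. It is also the structural reason one obtains only a \emph{semi}-model structure: the cofibrant-domain hypotheses in the lifting and factorization axioms are exactly what guarantee that the relevant $\sO_A$ is entrywise cofibrant, so that $\clubacof$ applies.
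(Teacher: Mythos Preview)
The paper does not prove this statement; it is imported verbatim as Theorem 6.2.3 of \cite{white-yau}, so there is no in-paper proof to compare against. Your argument is a correct reconstruction of the standard transfer proof and is built from exactly the ingredients the present paper also imports from \cite{white-yau}: the pushout filtrations of Propositions \ref{free-pushout-filtration} and \ref{o-ainfinity}, and the entrywise-cofibrancy propagation that here appears as Lemma \ref{middle-row-lemma}. Your bootstrapping order (establish the Claim purely in $\calmc$ via $(\clubsuit)$, then verify the semi-model axioms, with the cofibrant-domain hypothesis supplying entrywise cofibrancy of $\sO_A$ where $\clubacof$ is invoked) matches the logic behind the cited result, and your diagnosis of why only a \emph{semi}-model structure results is the right one.
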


\begin{lemma}[$=$ 6.2.4 in \cite{white-yau}]
\label{middle-row-lemma}
Suppose $\calm$ is a symmetric monoidal closed category and is a model category satisfying $\clubcof$, and $\sO$ is a $\fC$-colored operad in $\calm$.
\begin{enumerate}
\item
Suppose $j : A \to B \in \alg(\sO)$ is an $(\sO \comp \calm_{\cof})$-cofibration, i.e., a retract of a transfinite composition of pushouts of maps in $\sO \comp \calm_{\cof}$.  Suppose also that $\sO_A$ is entrywise cofibrant in $\calm$.  Then $\sO_A \to \sO_B$ is entrywise a cofibration in $\calm$.
\item
Suppose $\sO$ is entrywise cofibrant in $\calm$, and suppose $\varnothing \to A \in \alg(\sO)$ is a $(\sO \comp \calm_{\cof})$-cofibration.  Then $\sO_A$ is entrywise cofibrant in $\calm$.
\end{enumerate}
\end{lemma}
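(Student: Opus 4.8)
The plan is to establish (1) directly and then deduce (2) from it together with Proposition \ref{o-sub-empty}. For (1) I would begin with a chain of standard reductions. Entrywise cofibrations in a diagram category over $\calm$ are closed under retracts, transfinite composition, and pushouts (the last two being computed objectwise there); the functor $\sO_{(-)}\singledbrc$ preserves reflexive coequalizers and filtered colimits by Lemma \ref{osubdc}; $\sO \comp (-)$ is a left adjoint (hence preserves all colimits); and the forgetful functor $\alg(\sO) \to \calmc$ preserves filtered colimits by Proposition \ref{algebra-bicomplete}, so $\sO \comp (-)$ followed by the forgetful functor carries transfinite compositions to transfinite compositions. Combining these facts with the pasting law for pushout squares, it suffices to treat the case in which $j : A \to B$ is a single pushout of $\sO \comp i$ for a cofibration $i : X \to Y \in \calmc$; and then, using that pushouts in $\calmc$ are computed colorwise and that any $\fC$-colored cofibration is a transfinite composite of cofibrations each of which is a pushout of one concentrated in a single color, to reduce further to the case where $i$ is concentrated in one color $b \in \fC$. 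Throughout these reductions the hypothesis on $\sO_A$ is propagated by an evident induction along the tower: if $\sO_{A'}$ is entrywise cofibrant and $\sO_{A'} \to \sO_{A''}$ is an entrywise cofibration then $\sO_{A''}$ is entrywise cofibrant, and a transfinite composite of entrywise cofibrations out of an entrywise cofibrant object is entrywise cofibrant.

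In the reduced case I would feed the pushout into Proposition \ref{o-ainfinity}: for each $d \in \fC$ and orbit $[\uc] \in \sigmaofc$, the map $\sO_A\singledbrc \to \sO_B\singledbrc$ in $\calm^{\sigmabrcop \times \{d\}}$ is a countable composition of maps $j_t$, each a pushout of
\[\nicexy{\osuba \singledbrtbbrc \tensorover{\Sigma_t} Q^{t}_{t-1} \ar[r]^-{\id \tensorover{\Sigma_t} i^{\boxprod t}} & \osuba \singledbrtbbrc \tensorover{\Sigma_t} Y^{\otimes t}.}\]
Pushouts and countable compositions in $\calm^{\sigmabrcop \times \{d\}}$ are computed objectwise, so it is enough to check that $\id \tensorover{\Sigma_t} i^{\boxprod t}$ is an entrywise cofibration. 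Fix $\uc' \in [\uc]$; evaluation at $\uc'$ commutes with the colimit $\tensorover{\Sigma_t}$, so at $\uc'$ the map becomes $K \tensorover{\Sigma_t} i^{\boxprod t}$ in $\calm$, where $K \in \calm^{\Sigma_t}$ is the value of $\osuba \singledbrtbbrc$ at $\uc'$. This $K$ is a value of $\sO_A$, hence cofibrant in $\calm$ by hypothesis, and $i$ is a cofibration, so condition $\clubcof$ (with $n = t$) shows $K \tensorover{\Sigma_t} i^{\boxprod t}$ is a cofibration in $\calm$. This proves (1).

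For (2): if $A$ is an $(\sO \comp \calm_{\cof})$-cell complex then $\varnothing \to A$ is a relative $(\sO \comp \calm_{\cof})$-cell complex, and $\sO_{\varnothing} \cong \sO$ is entrywise cofibrant by hypothesis (Proposition \ref{o-sub-empty}). Applying (1) to $\varnothing \to A$ shows $\sO_{\varnothing} \to \sO_A$ is an entrywise cofibration out of an entrywise cofibrant object, so $\sO_A$ is entrywise cofibrant.

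The step I expect to be the main obstacle is the reduction to $i$ concentrated in a single color, which is exactly what makes Propositions \ref{free-pushout-filtration} and \ref{o-ainfinity} applicable: a general cofibration of $\fC$-colored objects need not be so concentrated, so one must decompose it compatibly with $\sO \comp (-)$ and with the pasting of pushout squares, while keeping the entrywise cofibrancy of $\sO_A$ propagating consistently through the resulting tower of pushouts and transfinite compositions. Once the filtration of Proposition \ref{o-ainfinity} is in place, the final step — recognizing $K$ as a cofibrant $\Sigma_t$-object and invoking $\clubcof$ — is essentially immediate.
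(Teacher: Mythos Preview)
The paper does not prove this lemma; it is imported verbatim from \cite{white-yau} (6.2.4), so there is no in-paper proof to compare against. That said, your argument is the natural one and matches what one expects in the source: reduce by closure properties of cofibrations and by Lemma \ref{osubdc} to a single pushout along $\sO \comp i$ with $i$ concentrated in one color, feed this into the filtration of Proposition \ref{o-ainfinity}, and apply $\clubcof$ entrywise to each stage $j_t$; then deduce (2) from (1) via Proposition \ref{o-sub-empty}. The identification of the evaluated object $K$ as an entry of $\sO_A$, hence cofibrant in $\M$, and the invocation of $\clubcof$ for $K \tensorover{\Sigma_t} i^{\boxprod t}$ are exactly right.

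One small point to tighten: in the retract reduction, if $j : A \to B$ is a retract of a transfinite composition $j' : A' \to B'$, your induction on $j'$ needs $\sO_{A'}$ entrywise cofibrant, but the hypothesis only gives you this for $\sO_A$, and $\sO_A$ being a retract of $\sO_{A'}$ goes the wrong way. The standard fix is to push $j'$ out along the retraction $r : A' \to A$, obtaining a transfinite composition of pushouts of the same generating maps that now starts at $A$ and of which $j$ is still a retract; then your inductive propagation of the entrywise-cofibrancy hypothesis applies directly. With that adjustment the proof is complete.
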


\begin{definition}\label{def:nice.qeq}
A \emph{nice Quillen equivalence} $L : \M \adjoint \N : R$ is a weak symmetric monoidal Quillen equivalence (Def. \ref{def:weak.symmetric.monoidal}) between cofibrantly generated monoidal model categories such that the following conditions hold.
\begin{enumerate}
\item $(\filledstar)$ (Def. \ref{def:star}), $(\#)$ (Def. \ref{def:sharp}), and $(\clubsuit)$ (Def. \ref{def:club}) hold in $\M$ and $\N$.  
\item $\N$ satisfies $(\medstar)$ (Def. \ref{def:star}).
\item Every generating cofibration in $\M$ has cofibrant domain.
\end{enumerate}
\end{definition}

\begin{example} \label{ex:nice}
Let $k$ be a field of characteristic zero. Then the following are examples of nice Quillen equivalences:
\begin{enumerate}
\item the Dold-Kan correspondence between simplicial $k$-modules and non-negatively graded chain complexes of $k$-modules;
\item the monoidal dual Dold-Kan correspondence $Q: \Ch(k)^{\geq 0} \leftrightarrows (\Vect_k)^{\Fin}: P$, between non-negatively graded cochain complexes of $k$-modules and the category of $\Fin$-objects in $k$-vector spaces, which is closely related to cosimplicial $k$-vector spaces.
\end{enumerate}

The monoidal dual Dold-Kan correspondence \cite{cortinas} is recalled in Section \ref{sec:dualDold}. The authors verified $(\clubsuit)$ (as a consequence of a stronger hypothesis, $(\spadesuit)$) for chain complexes over a field of characteristic zero in \cite{white-yau} (Theorem 8.1.1). The proof for simplicial $k$-modules, for $\Ch(k)^{\geq 0}$, and for $(\Vect_k)^{\Fin}$ are analogous. The cofibrancy hypothesis \ref{def:nice.qeq}(3) is trivial because every object in these categories is cofibrant. We verified $(\filledstar), (\#)$, and $(\medstar)$ in \ref{star.examples} and \ref{hash.examples}.
\end{example}

\section{Main Result} \label{sec:main}

\subsection{Key Step}

By Proposition \ref{osuba-empty}, for each color $d \in \fC$, there are natural isomorphisms
\[L\sO_A\dnothing \cong LA_d \andspace \sP_{\Lbar A}\dnothing = (\Lbar A)_d.\]
 A key part of the proof of Theorem \ref{main.theorem} is the following result.

\begin{proposition}
\label{loa.to.pla}
Suppose:
\begin{enumerate}
\item $L : \M \adjoint \N : R$ is a nice Quillen equivalence (Def. \ref{def:nice.qeq}).
\item $f : \sO \to R\sP$ is a map of $\fC$-colored operads in $\M$ with $\fC$ a set, $\sO$ an entrywise cofibrant $\fC$-colored operad in $\M$, and $\sP$ an entrywise cofibrant $\fC$-colored operad in $\N$.  The entrywise adjoint $\fbar : L\sO \to \sP$ is an entrywise weak equivalence in $\N$. 
\end{enumerate}
Suppose $A$ is a cofibrant $\sO$-algebra.  Then the map $f : \sO \to R\sP$ induces a natural entrywise weak equivalence
\[\nicexy{L\sO_A \ar[r]^-{f_{\infty}} & \sP_{\Lbar A} \in \symseqcn}\]
whose value at $\dnothing$ is the comparison map $\chi_A : LA \to \Lbar A$ \eqref{comparison.map} evaluated at $d$ for each $d \in \fC$.
\end{proposition}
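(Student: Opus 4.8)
The plan is to build the weak equivalence $f_\infty : L\sO_A \to \sP_{\Lbar A}$ cell-by-cell, using the filtration from Proposition \ref{o-ainfinity} together with an induction on the cellular structure of the cofibrant $\sO$-algebra $A$. First I would reduce to the case where $A$ is an $(\sO \comp \calm_{\cof})$-cell complex: any cofibrant $\sO$-algebra is a retract of such a complex, $\sO_{(-)}$ and $L$ commute with retracts, and weak equivalences are closed under retracts, so it suffices to treat cell complexes and then transfer the conclusion across the retract. Having done this, I would write $A$ as a transfinite composition $\varnothing = A^{(0)} \to A^{(1)} \to \cdots \to \colim A^{(\beta)} = A$ of pushouts of generating cofibrations $\sO \comp i$, with each $i : X \to Y$ a generating cofibration of $\calm$ concentrated in a single color (and, by hypothesis (3) of a nice Quillen equivalence, with $X$ cofibrant). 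The base case $A^{(0)} = \varnothing$ is handled by Proposition \ref{o-sub-empty}, which gives $\sO_\varnothing \cong \sO$ and $\sP_{\Lbar \varnothing} = \sP_{\initialp} \cong \sP$, so that $f_\infty$ at this stage is exactly $\fbar : L\sO \to \sP$, an entrywise weak equivalence by hypothesis. The colimit (limit-ordinal) stages are handled because $\sO_{(-)}\singledbrc$ preserves filtered colimits (Lemma \ref{osubdc}), $L$ preserves all colimits, and filtered colimits of entrywise weak equivalences between entrywise-cofibrant objects are weak equivalences in a cofibrantly generated model category; entrywise cofibrancy along the way is guaranteed by Lemma \ref{middle-row-lemma}(2) on the $\M$ side and its $\N$-analogue on the $\sP_{\Lbar A}$ side.

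The heart of the argument is the successor step: given a pushout $A' \to A''$ of a single cell $\sO \comp i$ in $\alg(\sO)$, with $f_\infty$ already known to be an entrywise weak equivalence $L\sO_{A'} \to \sP_{\Lbar A'}$, I must produce the corresponding weak equivalence for $A''$. Here I would use Proposition \ref{o-ainfinity} for $\sO$ over $\M$, which factors $\sO_{A'} \to \sO_{A''}$ as a countable composition whose stage-$t$ map $j_t$ is a pushout of $\id \tensorover{\Sigma_t} i^{\boxprod t}$ along a $\Sigma_t$-equivariant diagram built from $\osuba\singledbrtbbrc$ and the $Q$-construction of Definition \ref{def:q-construction}; applying $L$ (which preserves colimits) turns this into a countable composition of pushouts in $\N$. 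On the $\sP$ side, by \eqref{lbar.ocomp} the pushout $\Lbar A' \to \Lbar A''$ is the pushout of $\sP \comp (Li)$ — since $\Lbar(\sO \comp X) = \sP \comp LX$ — so Proposition \ref{o-ainfinity} applied to $\sP$ over $\N$ (with the cell $Li : LX \to LY$) gives an analogous countable filtration of $\sP_{\Lbar A'} \to \sP_{\Lbar A''}$. The map $f_\infty$ for $A''$ is then assembled from $f_\infty$ for $A'$ and a compatible family of maps between the two filtrations, and at each stage the comparison of the $Q_{t-1}^t \to Y^{\otimes t}$ pushout squares reduces to comparing $L\bigl(\osuba\singledbrtbbrc \tensoroversigmar (-)\bigr)$ with $\sP_{\Lbar A}\bigl(\singledbrtbbrc\bigr) \tensoroversigmar L(-)$ in each arity; the comonoidal map $L^2$ and condition $(\#)$ are exactly what make these agree up to weak equivalence. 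The statement about the value at $\dnothing$ follows by unwinding: Proposition \ref{osuba-empty} identifies $\sO_A\dnothing \cong A_d$ and $\sP_{\Lbar A}\dnothing \cong (\Lbar A)_d$, and tracking the construction shows the induced map is exactly $\chi_A$ of \eqref{comparison.map}, which (for $A$ the initial algebra) restricts to $\fbar$ as in Example \ref{comparison.initial}.

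The main obstacle, and the step that will require real care, is the successor step — specifically, verifying that at each stage $t$ the natural comparison map
\[
\nicexy@C+.6cm{L\bigl(\osuba\singledbrtbbrc \tensoroversigmar Q^t_{t-1}\bigr) \ar[r] & \sP_{\Lbar A}\bigl(\singledbrtbbrc\bigr) \tensoroversigmar Q^t_{t-1}(L i)}
\]
(and the corresponding map with $Y^{\otimes t}$ in place of $Q^t_{t-1}$) is a weak equivalence, so that the induced map on pushouts is again a weak equivalence. This is where one needs: (i) entrywise cofibrancy of $\osuba\singledbrtbbrc$, supplied by Lemma \ref{middle-row-lemma} and the inductive hypothesis that $A'$ is built cellularly; (ii) cofibrancy of the $Q$-construction objects in the relevant $\Sigma_t$-equivariant sense, which follows from $(\clubsuit)$ and the pushout-product structure of the $Q_q^t$ tower; (iii) condition $(\#)$ to commute $L$ past $(-)\tensoroversigmar(-)$ up to weak equivalence, plus $(\medstar)$ in $\N$ and $(\filledstar)$ to control the behavior of $\tensoroversigmar$ and coinvariants on cofibrant objects; and (iv) the gluing lemma for pushouts along cofibrations in $\N$ to conclude the induced map on each $j_t$-pushout is a weak equivalence, then Ken Brown's lemma / closure of weak equivalences under transfinite composition of cofibrations to pass to $A_\infty$. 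Threading $f_\infty$ through the two filtrations coherently — i.e., producing an actual map of filtered diagrams, not merely a stagewise weak equivalence — is the bookkeeping-heavy part; I would set it up by defining $f_\infty$ simultaneously with the filtrations via the universal properties of the pushouts in \eqref{osubjt}, so that naturality is built in rather than checked after the fact.
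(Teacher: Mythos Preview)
Your proposal is correct and follows essentially the same strategy as the paper: the same retract reduction, the same outer transfinite induction on cells with base case $\fbar$, the same inner countable induction via Proposition~\ref{o-ainfinity}, the same identification $\Lbar(\sO\comp X)=\sP\comp LX$ on the $\sP$-side, and the same use of the Cube/gluing Lemma together with $(\#)$, $(\medstar)$, $(\filledstar)$, and $(\clubsuit)$ to handle the stagewise comparison maps. The one place where the paper is more explicit than your sketch is the coherence step you flag as ``bookkeeping-heavy'': the paper does not get commutativity of the relevant cube face for free from universal properties but instead proves it as a separate Lemma~\ref{top.face.of.cube}, unwinding the coequalizer defining $\sO_{A}$, the punctured-cube colimit defining $Q^r_{r-1}$, and the adjunction identities relating $\fbar$, $\chi_{A^{t-1}}$, and the unit/counit --- so you should expect that verification to require genuine work rather than being automatic.
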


Below, we will show that, if $\sO$ and $\sP$ are $\Sigma$-cofibrant, then we can weaken assumption (1) above to only requiring that $(L,R)$ be a weak symmetric monoidal Quillen equivalence and that the domains of the generating cofibrations in $\M$ be cofibrant.

\begin{proof}
The generating cofibrations in $\algo$ have the form $\sO \comp i$ for some generating cofibration $i$ in $\M$, regarded as a map in $\mtoc$ concentrated in a single color.  Each cofibration in $\algo$ is a retract of a transfinite composition of pushouts of generating cofibrations.  By a retract argument we may assume that the map $\initialo \to A \in \algo$ is a transfinite composition
\begin{equation}\label{a.cell.complex}
\narrowxy{\initialo = A^0 \ar[r] & A^1 \ar[r] & A^2 \ar[r] & \cdots \ar[r] & A \in \algo}
\end{equation}
such that, for each $t \geq 1$, the map $A^{t-1} \to A^t$ is a pushout
\begin{equation}\label{at.pushout}
\nicexy{\sO \comp X \ar[d]_-{\Id \comp i} \ar[r] & A^{t-1} \ar[d]\\
\sO \comp Y \ar[r] & A^t}
\end{equation}
in $\algo$ for some generating cofibration $i : X \to Y$ in $\M$, regarded as a map in $\mtoc$ concentrated in a single color, say, $c \in \fC$.  Both the map $i$ and the color $c$ depend on the index $t$.  Note that, since the initial $\sO$-algebra is cofibrant and $A^{t-1} \to A^t \in \algo$ is a cofibration, all the $A^t$ are cofibrant $\sO$-algebras.  Moreover, by assumption on $\M$, the generating cofibration $i$ is a cofibration between cofibrant objects.

We apply $\sO_{(-)}$ (Def. \ref{oaalgebra}) to the transfinite composition \eqref{a.cell.complex} and use Proposition \ref{o-sub-empty} on $A^0$ and Lemma \ref{osubdc} on the colimit.  We obtain the transfinite composition
\begin{equation}\label{osuba.cell}
\narrowxy{\sO \cong \sO_{A^0} \ar[r] & \sO_{A^1} \ar[r] & \sO_{A^2} \ar[r] & \cdots \ar[r] & \sO_{A} \in \symseqcm.}
\end{equation}
Since $\sO$ is entrywise cofibrant and since all the $A^t$ are cofibrant $\sO$-algebras, by Lemma \ref{middle-row-lemma}, in \eqref{osuba.cell} every map is an entrywise cofibration between entrywise cofibrant objects in $\M$.  Applying the left Quillen equivalence \cite{hirschhorn} (11.6.5(2))
\[\nicexy{L : \symseqcm \ar[r] & \symseqcn}\]
to \eqref{osuba.cell}, we obtain the transfinite composition
\begin{equation}\label{losuba.cell}
\narrowxy{L\sO \cong L\sO_{A^0} \ar[r] & L\sO_{A^1} \ar[r] & L\sO_{A^2} \ar[r] & \cdots \ar[r] & L\sO_{A} \in \symseqcn}
\end{equation}
of entrywise cofibrations between entrywise cofibrant objects in $\N$.

Next we apply the left adjoint $\Lbar : \algo \to \algp$ to the transfinite composition \eqref{a.cell.complex} and the pushouts \eqref{at.pushout}.  We obtain the transfinite composition
\begin{equation}\label{lbar.star}
\narrowxy{\initialp = \Lbar\initialo = \Lbar A^0 \ar[r] & \Lbar A^1 \ar[r] & \Lbar A^2 \ar[r] & \cdots \ar[r] & \Lbar A \in \algp}
\end{equation}
such that, for each $t \geq 1$, the map $\Lbar A^{t-1} \to \Lbar A^t$ is a pushout
\begin{equation}\label{lbar.at.pushout}
\nicexy{\sP \comp (LX) = \Lbar(\sO \comp X) \ar[d]_-{\Id \comp Li}^-{=\, \Lbar(\Id \comp i)} \ar[r] & \Lbar A^{t-1} \ar[d]\\ \sP \comp (LY) = \Lbar(\sO \comp Y) \ar[r] & \Lbar A^t}
\end{equation}
in $\algp$. The equalities on the left come from \eqref{lbar.ocomp}.  Since $\sP$ is also entrywise cofibrant, similar to the paragraph containing \eqref{osuba.cell}, applying $\sP_{(-)}$ to the transfinite composition \eqref{lbar.star} yields the transfinite composition
\begin{equation}\label{psub.lbar.star}
\narrowxy{\sP \cong \sP_{\Lbar A^0} \ar[r] & \sP_{\Lbar A^1} \ar[r] & \sP_{\Lbar A^2} \ar[r] & \cdots \ar[r] & \sP_{\Lbar A} \in \symseqcn}
\end{equation}
of entrywise cofibrations between entrywise cofibrant objects in $\N$.

Consider the commutative ladder diagram from \eqref{losuba.cell} to \eqref{psub.lbar.star},
\begin{equation}\label{lostar.to.plbarstar}
\nicexy{L\sO \cong L\sO_{A^0} \ar[d]_-{f_0 \defn \fbar}\ar[r] & L\sO_{A^1} \ar[d]_-{f_1} \ar[r] & L\sO_{A^2} \ar[d]_-{f_2} \ar[r] & \cdots \ar[r] & L\sO_{A} \ar[d]_-{\colim\, f_t \,=}^-{ f_{\infty}}\\
 \sP \cong \sP_{\Lbar A^0} \ar[r] & \sP_{\Lbar A^1} \ar[r] & \sP_{\Lbar A^2} \ar[r]  & \cdots \ar[r] & \sP_{\Lbar A}}
\end{equation}
in $\symseqcn$, in which $f_0$ is defined to be $\fbar : L\sO \to \sP$.  Our goal is to show that the colimit $f_{\infty}$ is a weak equivalence, i.e., entrywise weak equivalence in $\N$.  By \cite{hirschhorn} (17.9.1) it suffices to show by induction that each vertical map $f_t$ for $t \geq 0$ is a weak equivalence.  The initial map $f_0 = \fbar$ is a weak equivalence by assumption.

For the induction step, suppose $t \geq 1$ and that the map
\[\nicexy{L\sO_{A^{t-1}} \ar[r]^-{f_{t-1}} & \sP_{\Lbar A^{t-1}} \in \symseqcn}\]
is a weak equivalence.  We want to show that $f_t$ is a weak equivalence.  The map $f_t$ is inductively defined as follows.  Pick $d \in \fC$ and $\ub \in \profc$.  Applying Proposition \ref{o-ainfinity} to the pushout \eqref{at.pushout}, we see that the map
\[\sO_{A^{t-1}}\dbrb \to \sO_{A^t}\dbrb\]
is a countable composition
\begin{equation}\label{osubstar.t}
\narrowxy{\sO_{A^{t-1}}\dbrb = \sO_{A^{t-1}}^0\dbrb \ar[r] & \sO_{A^{t-1}}^1\dbrb \ar[r] & \sO_{A^{t-1}}^2\dbrb \ar[r] & \cdots \ar[r] & \sO_{A^t}\dbrb}
\end{equation}
in $\Msigmabopd$ in which, for each $r \geq 1$, the $r$th map is the pushout
\begin{equation}\label{star.r.t}
\nicexy{\sO_{A^{t-1}}\drcub \tensoroversigmar Q^r_{r-1}(i) \ar[r] \ar[d]_{\Id \tensoroversigmar i^{\boxprod r}} & \sO_{A^{t-1}}^{r-1}\dbrb \ar[d]\\
\sO_{A^{t-1}}\drcub \tensoroversigmar Y^{\otimes r} \ar[r] & \sO^r_{A^{t-1}}\dbrb}
\end{equation}
in $\Msigmabopd$.  In the previous diagram, $rc = (c, \ldots, c)$ with $r$ copies of the color $c \in \fC$, and the top horizontal map is naturally induced by the map $\sO \comp X \to A^{t-1}$.  We already observed above that every $\sO_{A^k}$ is entrywise cofibrant in $\M$.  Since $i : X \to Y$ is a cofibration in $\M$, by $\clubcof$ (Def. \ref{def:club}) the left vertical map in \eqref{star.r.t} is entrywise a cofibration in $\M$, hence so is the right vertical map.  An induction shows that in \eqref{osubstar.t} every map is an entrywise cofibration between entrywise cofibrant objects in $\M$.  

Furthermore, since $i : X \to Y$ is a cofibration between cofibrant objects, the iterated pushout product $i^{\boxprod r} : Q^r_{r-1}(i) \to Y^{\otimes r}$ is also a cofibration between cofibrant objects in $\M$ by the pushout product axiom.  See, for example, \cite{harper-jpaa} (proof of 7.19) for an explicit iterated construction of $Q^r_{r-1}$.  As every $\sO_{A^k}$ is entrywise cofibrant, both objects
\[\sO_{A^{t-1}}\drcub  \otimes Q^r_{r-1}(i) \andspace \sO_{A^{t-1}}\drcub  \otimes Y^{\otimes r}\]
are entrywise cofibrant in $\M$ by the pushout product axiom.  Condition $(\filledstar)$ (Def. \ref{def:star}) in $\M$ implies that, after taking $\Sigma_r$-coinvariants, both objects on the left side of \eqref{star.r.t} are entrywise cofibrant in $\M$.

Now we apply the left Quillen equivalence \cite{hirschhorn} (11.6.5(2))
\[L : \Msigmabopd \to \Nsigmabopd\]
to the countable composition \eqref{osubstar.t} and the pushouts \eqref{star.r.t}.  We obtain the countable composition
\begin{equation}\label{losubstar.t}
\narrowxy{L\sO_{A^{t-1}}\dbrb = L\sO_{A^{t-1}}^0\dbrb \ar[r] & L\sO_{A^{t-1}}^1\dbrb \ar[r] & L\sO_{A^{t-1}}^2\dbrb \ar[r] & \cdots \ar[r] & L\sO_{A^t}\dbrb}
\end{equation}
in $\Nsigmabopd$ of entrywise cofibrations between entrywise cofibrant objects.  For each $r \geq 1$, the $r$th map is the pushout
\begin{equation}\label{lstar.r.t}
\nicexy{L\Bigl[\sO_{A^{t-1}}\drcub \tensoroversigmar Q^r_{r-1}(i)\Bigr] \ar[r] \ar[d]_{L\bigl(\Id \tensoroversigmar i^{\boxprod r}\bigr)} & L\sO_{A^{t-1}}^{r-1}\dbrb \ar[d]\\
L\Bigl[\sO_{A^{t-1}}\drcub \tensoroversigmar Y^{\otimes r}\Bigr] \ar[r] & L\sO^r_{A^{t-1}}\dbrb}
\end{equation}
in $\Nsigmabopd$ with both vertical maps entrywise cofibrations between entrywise cofibrant objects.  

Next, applying Proposition \ref{o-ainfinity} to the pushout \eqref{lbar.at.pushout}, we see that the map
\[\sP_{\Lbar A^{t-1}}\dbrb \to \sP_{\Lbar A^t}\dbrb\]
is a countable composition
\begin{equation}\label{psub.lbarstar.t}
\narrowxy{\sP_{\Lbar A^{t-1}}\dbrb = \sP_{\Lbar A^{t-1}}^0\dbrb \ar[r] & \sP_{\Lbar A^{t-1}}^1\dbrb \ar[r] & \sP_{\Lbar A^{t-1}}^2\dbrb \ar[r] & \cdots \ar[r] & \sP_{\Lbar A^t}\dbrb}
\end{equation}
in $\Nsigmabopd$ in which, for each $r \geq 1$, the $r$th map is the pushout
\begin{equation}\label{lbarstar.r.t}
\nicexy{\sP_{\Lbar A^{t-1}}\drcub \tensoroversigmar Q^r_{r-1}(Li) \ar[r] \ar[d]_{\Id \tensoroversigmar (Li)^{\boxprod r}} & \sP_{\Lbar A^{t-1}}^{r-1}\dbrb \ar[d]\\
\sP_{\Lbar A^{t-1}}\drcub \tensoroversigmar (LY)^{\otimes r} \ar[r] & \sP^r_{\Lbar A^{t-1}}\dbrb}
\end{equation}
in $\Nsigmabopd$.  We now argue as in the two paragraphs before \eqref{losubstar.t} and use conditions $\clubcof$ and $(\filledstar)$ in $\N$.  We then see that in \eqref{psub.lbarstar.t} every map is an entrywise cofibration between entrywise cofibrant objects.  Moreover, both vertical maps in \eqref{lbarstar.r.t} are entrywise cofibrations between entrywise cofibrant objects.

Consider the commutative ladder diagram from \eqref{losubstar.t} to \eqref{psub.lbarstar.t},
\begin{equation}\label{lostart.to.plbarstart}
\narrowxy{L\sO_{A^{t-1}}\dbrb = L\sO_{A^{t-1}}^0\dbrb \ar[d]_-{f^0_{t-1}}^-{\defn f_{t-1}}\ar[r] & L\sO_{A^{t-1}}^1\dbrb \ar[d]_-{f_{t-1}^1} \ar[r] & L\sO_{A^{t-1}}^2\dbrb \ar[d]_-{f_{t-1}^2} \ar[r] & \cdots \ar[r] & L\sO_{A^t}\dbrb \ar[d]_-{\colim_r\, f_{t-1}^r \,=}^-{f_t}\\
 \sP_{\Lbar A^{t-1}}\dbrb = \sP_{\Lbar A^{t-1}}^0\dbrb \ar[r] & \sP_{\Lbar A^{t-1}}^1\dbrb \ar[r] & \sP_{\Lbar A^{t-1}}^2\dbrb \ar[r]  & \cdots \ar[r] & \sP_{\Lbar A^t}\dbrb}
\end{equation}
in $\Nsigmabopd$.  By \cite{hirschhorn} (15.10.12(1)), to show that the colimit $f_t$ is a weak equivalence, it suffices to show that each vertical map $f_{t-1}^r$ for $r \geq 0$ is a weak equivalence.  The initial map $f_{t-1}^0$ is defined as $f_{t-1}$, which is a weak equivalence.

For the induction step, suppose $r \geq 1$ and that $f_{t-1}^{r-1}$ is a weak equivalence.  We want to show that $f_{t-1}^r$ is a weak equivalence. Consider the naturally induced commutative cube from \eqref{lstar.r.t} (the back face below) to \eqref{lbarstar.r.t} (the front face),
\begin{equation}\label{lstarrt.to.lbarstarrt}
\nicexy@C-1.3cm{L\Bigl[\sO_{A^{t-1}}\drcub \tensoroversigmar Q^r_{r-1}(i)\Bigr] \ar[rr] \ar[dd]_{L\bigl(\Id \tensoroversigmar i^{\boxprod r}\bigr)} \ar@(d,dl)[dr]^-{\alpha} && L\sO_{A^{t-1}}^{r-1}\dbrb \ar'[d][dd] \ar[dr]^-{f_{t-1}^{r-1}}_-{\sim} &\\
& \sP_{\Lbar A^{t-1}}\drcub \tensoroversigmar Q^r_{r-1}(Li) \ar[rr] \ar[dd]  && \sP_{\Lbar A^{t-1}}^{r-1}\dbrb \ar[dd]\\
L\Bigl[\sO_{A^{t-1}}\drcub \tensoroversigmar Y^{\otimes r}\Bigr] \ar'[r][rr] \ar@(d,dl)[dr]_-{\beta} && L\sO^r_{A^{t-1}}\dbrb \ar[dr]^-{f_{t-1}^r} &\\
& \sP_{\Lbar A^{t-1}}\drcub \tensoroversigmar (LY)^{\otimes r} \ar[rr] && \sP^r_{\Lbar A^{t-1}}\dbrb}
\end{equation}
in $\Nsigmabopd$.  We will prove in Lemma \ref{top.face.of.cube} that the top and bottom faces of the cube \eqref{lstarrt.to.lbarstarrt} are indeed commutative. The map $\alpha$ factors as the composite
\[\nicexy@C+.7cm{L\Bigl[\sO_{A^{t-1}}\drcub \tensoroversigmar Q^r_{r-1}(i)\Bigr]  \ar[r]^-{\alpha} \ar[d]_-{\cong} & \sP_{\Lbar A^{t-1}}\drcub \tensoroversigmar Q^r_{r-1}(Li) \\
\Bigl[L\bigl(\sO_{A^{t-1}}\drcub \otimes Q^r_{r-1}(i)\bigr)\Bigr]_{\Sigma_r} \ar[d]_-{\alpha_1 \,=\, (L^2)_{\Sigma_r}} & \Bigl[L\sO_{A^{t-1}}\drcub \otimes Q^r_{r-1}(Li)\Bigr]_{\Sigma_r} \ar[u]^-{\alpha_2 \,=}_-{f_{t-1} \tensoroversigmar \Id} \\
[L\sO_{A^{t-1}}\drcub \otimes LQ_{r-1}^r(i)]_{\Sigma_r} \ar[ur]_-{\Phi_r^*}
&}\]
in which:
\begin{itemize}
\item $L^2$ is the comonoidal structure map of $L$ \eqref{comonoidal.map};
\item $\Phi_r: LQ^r_{r-1}(i) \to Q^r_{r-1}(Li)$ is a weak equivalence between cofibrant objects, defined in Lemma \ref{lemma:2018};
\item $\Phi_r^* = [\Id_{L\sO_{A^{t-1}}\drcub} \otimes \; \Phi_r]_{\Sigma_r}$.
\end{itemize}
Observe that to define the map $\alpha_2$, the map $f_{t-1}$ must be an equivariant map, instead of merely a map of individual entries.  There is a similar factorization for the map $\beta$.  In the top face, the top horizontal map is induced by $L$ applied to the map $\sO \comp X \to A^{t-1}$, while the other horizontal map is induced by the map
\[\nicexy{\sP \comp (LX) = \Lbar(\sO \comp X) \ar[r] & \Lbar A^{t-1}.}\]

We already observed above that both vertical maps in the left face of (\ref{lstarrt.to.lbarstarrt}) are entrywise cofibrations and that all the objects in the cube are entrywise cofibrant.  By the Cube Lemma \cite{hovey} (5.2.6), to show that $f_{t-1}^r$ is a weak equivalence, it suffices to show that both $\alpha$ and $\beta$ are entrywise weak equivalences.
 
To show that $\alpha$ is a weak equivalence, it is enough to show that $\alpha_1$, $\alpha_2$, and $\Phi_r^*$ are weak equivalences.  We already observed that $\sO_{A^{t-1}}$ is entrywise cofibrant and that  $Q^r_{r-1}(i)$ is cofibrant in $\M$.  So $\alpha_1$ is a weak equivalence by condition $(\#)$ (Def. \ref{def:sharp}). Since $L$ preserves cofibrancy, condition $(\medstar)$ in $\N$ (Def. \ref{def:star}), together with Lemma \ref{lemma:2018}, implies $\Phi^*$ is a weak equivalence. Likewise, since $f_{t-1}$ is a weak equivalence between entrywise cofibrant objects, condition $(\medstar)$ in $\N$ implies that $\alpha_2$ is a weak equivalence.  This proves that $\alpha$ is a weak equivalence.  A similar argument, with $Y^{\otimes r}$ in place of $Q^r_{r-1}(i)$, proves that $\beta$ is a weak equivalence.

Therefore, the map $f^r_{t-1}$ is a weak equivalence.  This finishes the induction in the ladder diagram \eqref{lostart.to.plbarstart}, proving that the map $f_t$ is a weak equivalence.  This in turn proves the induction step in the first ladder diagram \eqref{lostar.to.plbarstar}, so the map $f_\infty$ is a weak equivalence.

Finally, the assertion $f_\infty\dnothing = (\chi_A)_d$ is a consequence of the naturality of $f_\infty$ and Example \ref{comparison.initial}.
\end{proof}

\begin{lemma}
\label{top.face.of.cube}
The top and bottom faces of the cube \eqref{lstarrt.to.lbarstarrt} are commutative.
\end{lemma}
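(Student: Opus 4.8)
The plan is to unwind the two top horizontal maps of the cube \eqref{lstarrt.to.lbarstarrt} and to recognize the asserted commutativity as an instance of the naturality of the comonoidal structure map $L^2$ \eqref{comonoidal.map} and of the naturality of the comparison map $\chi$ \eqref{comparison.map}. Write $g \colon \sO \comp X \to A^{t-1}$ for the attaching map of the pushout \eqref{at.pushout}; by \eqref{lbar.ocomp} the attaching map of the pushout \eqref{lbar.at.pushout} is $\Lbar g \colon \sP \comp (LX) = \Lbar(\sO \comp X) \to \Lbar A^{t-1}$, that is, $\Lbar$ applied to $g$. The first step is to record, from the proof of Proposition \ref{o-ainfinity} (which reduces to the proof of Proposition \ref{free-pushout-filtration}; see also \cite{harper-jpaa}, proof of 7.19), the explicit form of the ``induced by the attaching map'' maps appearing as the top rows of \eqref{star.r.t} and \eqref{lbarstar.r.t}: on the summand of $Q^r_{r-1}(i)$ indexed by a decomposition with at most $r-1$ tensor factors equal to $Y$, the map of \eqref{star.r.t} absorbs the remaining $X$-valued factors into $A^{t-1}$ via the composite $X \to \sO \comp X \xrightarrow{g} A^{t-1}$, using the $\sO$-algebra action on $A^{t-1}$ and the operadic composition of $\sO$, which together define the functor $\sO_{(-)}$ (Definition \ref{oaalgebra}); this summand lands in an earlier filtration stage $\sO_{A^{t-1}}^{s}\dbrb$ with $s < r$. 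Abbreviate the top row of \eqref{star.r.t} by $g_*$ and the top row of \eqref{lbarstar.r.t} by $(\Lbar g)_*$; the lemma asserts $f_{t-1}^{r-1} \circ L(g_*) = (\Lbar g)_* \circ \alpha$. The key point is that $(\Lbar g)_*$ is given by the \emph{same} recipe as $g_*$, with $(\sO, A^{t-1}, X, Y, i, g)$ replaced throughout by $(\sP, \Lbar A^{t-1}, LX, LY, Li, \Lbar g)$, all constructions carried out in $\N$.

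Next I would compare the two attaching maps directly. Using the naturality of the comparison map with respect to $g$ (Example \ref{comparison.adjoint}), together with the fact that the comparison map of a free algebra $\sO \comp X$, precomposed with the free-algebra unit $X \to \sO \comp X$, is the free-algebra unit $LX \to \sP \comp (LX)$ (the free-algebra analogue of Example \ref{comparison.initial}, obtained by chasing the adjunctions), one finds that the composite $LX \to \sP \comp (LX) \xrightarrow{\Lbar g} \Lbar A^{t-1}$ is equal to $L$ applied to $X \to \sO \comp X \xrightarrow{g} A^{t-1}$ followed by the comparison map $\chi_{A^{t-1}} \colon LA^{t-1} \to \Lbar A^{t-1}$. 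In other words, absorbing $LX$-factors through $\Lbar g$ amounts to applying $L$ to absorption through $g$ and then applying $\chi_{A^{t-1}}$.

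With these identifications in place, the remainder is a diagram chase, which I would carry out by induction on $r$, interleaved with the inductive definition of the maps $f_{t-1}^{r}$ in the proof of Proposition \ref{loa.to.pla} ($f_{t-1}$ being $\fbar$ when $t = 1$ and already available when $t > 1$). Since the domain $L\bigl[\sO_{A^{t-1}}\drcub \tensoroversigmar Q^r_{r-1}(i)\bigr]$ is the colimit, passed to $\Sigma_r$-coinvariants, of the summands $L\bigl(\sO_{A^{t-1}}\drcub \otimes X^{\otimes(r-s)} \otimes Y^{\otimes s}\bigr)$ for $0 \le s \le r-1$, and since $\alpha$ factors as $\alpha_2 \circ \alpha_1 \circ (\cong)$ with $\alpha_1 = (L^2)_{\Sigma_r}$ and $\alpha_2 = f_{t-1} \tensoroversigmar \Id$, it is enough to verify commutativity on one such summand. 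There, going right then down applies $L$ of the $g$-absorption and then transports along $f_{t-1}^{r-1}$, whereas going down then right applies $\alpha_1 = (L^2)_{\Sigma_r}$, then $\alpha_2 = f_{t-1} \tensoroversigmar \Id$, and then the $\Lbar g$-absorption. These two agree because of: (i) the naturality of $L^2$, which lets $\alpha_1$ pass through the structural maps assembling $Q^r_{r-1}(i)$ and through $\sO_{A^{t-1}}\drcub \otimes (-)$, compatibly with $LQ^r_{r-1}(i) \cong Q^r_{r-1}(Li)$ ($L$ preserving the colimit $Q^r_{r-1}$); (ii) the identity of the previous paragraph, rewriting the $\Lbar g$-absorption as $\chi_{A^{t-1}}$ after $L$ of the $g$-absorption; and (iii) the compatibility of $f_{t-1}$ with the structure maps presenting $\sO_{A^{t-1}}$ as the coequalizer of Definition \ref{oaalgebra} (hence with the absorption operations), which, combined with the inductive hypothesis identifying $f_{t-1}^{r-1}$ on each lower summand, closes the square.

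The only real obstacle I anticipate is bookkeeping: making the phrase ``the same recipe'' precise enough for the equivariant, summand-by-summand chase to genuinely close, in particular tracking the $\Sigma_r$-action and the target index $s < r$ through $\alpha$ so that the $\sO$-side and $\sP$-side attaching maps agree on the nose. There is no new homotopical input; every step is a naturality statement for $L^2$, for the comparison map $\chi$, or for the structure maps of $\sO_{(-)}$. The one mild subtlety is that this chase must be interleaved with the inductive construction of the maps $f_{t-1}^{r}$, which is why I would organize it as an induction on $r$.
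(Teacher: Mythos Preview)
Your proposal is correct and follows essentially the same route as the paper: reduce to summands of the colimit defining the source, then close the square using naturality of the comonoidal map $L^2$ together with the adjunction identities expressing $\fbar$ as $\mathrm{counit}\circ Lf$ and $\chi_{A^{t-1}}$ as $\mathrm{counit}\circ L(\mathrm{unit})$. The one substantive difference is the level of unwinding. You stop at summands of the form $L\bigl(\sO_{A^{t-1}}\drcub \otimes X^{\otimes(r-s)}\otimes Y^{\otimes s}\bigr)$ and then invoke, as your point (iii), a compatibility of $f_{t-1}$ with the coequalizer presentation of $\sO_{A^{t-1}}$ and hence with the absorption maps; the paper instead unwinds that coequalizer as well and checks commutativity directly on terms $L\bigl[\sO\duarcub \otimes A^{t-1}_{[\ua]}\otimes X^p\otimes Y^q\bigr]$, where $f_{t-1}$ is visibly represented by $L^2$ followed by $(\fbar,\chi_{A^{t-1}})$. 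The paper's extra unwinding is exactly what discharges your point (iii) without appeal to a separately stated compatibility, and it removes any need to phrase the argument as an induction on $r$; conversely, your formulation isolates more clearly \emph{which} naturalities are doing the work. Both arrive at the same verification.
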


\begin{proof}
The top face of the cube  \eqref{lstarrt.to.lbarstarrt} is the diagram
\begin{equation}\label{top.face}
\nicexy{L\Bigl[\sO_{A^{t-1}}\drcub \tensoroversigmar Q^r_{r-1}(i)\Bigr] \ar[r]^-{g_*} \ar[d]_-{\alpha} & L\sO_{A^{t-1}}^{r-1}\dbrb \ar[d]^-{f_{t-1}^{r-1}}\\
\sP_{\Lbar A^{t-1}}\drcub \tensoroversigmar Q^r_{r-1}(Li) \ar[r]^-{g'_*} & \sP_{\Lbar A^{t-1}}^{r-1}\dbrb}
\end{equation}
with $r,t \geq 1$ and $i : X \to Y \in \M$ concentrated in a single color $c \in \fC$.  The top horizontal map is induced by the $\sO$-algebra map $g : \sO \comp X \to A^{t-1}$, whose adjoint $X \to A^{t-1} \in \mtoc$ is also denoted by $g$.  The bottom horizontal map is induced by the composite
\[\nicexy{LX \ar@/^2pc/@(ul,ur)[rr]^-{g'} \ar[r]^-{Lg} & LA^{t-1} \ar[r]^-{\chi_A} & \Lbar A^{t-1},}\]
which is adjoint to the map 
\[\nicexy{\sP \comp (LX) = \Lbar(\sO \comp X) \ar[r]^-{\Lbar g} & \Lbar A^{t-1}}\]
of $\sP$-algebras.  As before, we omit writing the forgetful functors.

To show that \eqref{top.face} is commutative, observe that $\sO_{A^{t-1}}$ and $\sP_{\Lbar A^{t-1}}$ are defined as coequalizers (Def. \ref{oaalgebra}), that $Q^r_{r-1}$ is a colimit indexed by the punctured $r$-cube $\{0 < 1\}^r \setminus \{(1,\ldots,1)\}$ \cite{harper-jpaa} (7.19), and that taking $\Sigma_r$-coinvariants is also a colimit.  Therefore, it is enough to check the commutativity of the diagram \eqref{top.face} when it is restricted to a typical node in the colimiting cone.  In other words, it is enough to check the commutativity of the solid-arrow diagram
\begin{equation}\label{top.face.restricted}
\begin{footnotesize}
\nicexy@R-.3cm@C-1.3cm{L\Bigl[\sO\duarcub \otimes A^{t-1}_{[\ua]} \otimes X^{p} \otimes Y^q\Bigr] \ar@(r,l)[dr]|-{g_*} \ar[dd]|-{\mathrm{comonoidal}} && L\Bigl[\sO_{A^{t-1}}\dqcub \otimes Y^q \Bigr] \ar[dd]|-{\mathrm{comonoidal}}\\
&L\Bigl[\sO(\vdots) \otimes A^{t-1}_{[\ua,pc]} \otimes Y^q\Bigr] \ar@(r,l)[ur]|-{\mathrm{natural}} \ar@{.>}[d] &\\ 
L\sO(\vdots) \otimes (LA^{t-1})_{[\ua]} \otimes (LX)^p \otimes (LY)^q \ar[d]_-{(\chi_{A^{t-1}})_*} & \bullet \ar@{.>}[d] & L\sO_{A^{t-1}}\dqcub \otimes (LY)^q \ar[ddd]|-{(f_{t-1}, \Id)}\\
L\sO(\vdots) \otimes (\Lbar A^{t-1})_{[\ua]} \otimes (LX)^p \otimes (LY)^q \ar[dd]|-{(\fbar,\Id)} & \bullet \ar@{.>}[d] &\\
& \sP(\vdots) \otimes (\Lbar A^{t-1})_{[\ua,pc]} \otimes (LY)^q \ar@(r,l)[dr]|-{\mathrm{natural}}&\\
\sP(\vdots) \otimes (\Lbar A^{t-1})_{[\ua]} \otimes (LX)^p \otimes (LY)^q \ar@(r,l)[ur]|-{g'_*} && \sP_{\Lbar A^{t-1}}\dqcub \otimes (LY)^q}
\end{footnotesize}
\end{equation}
in which $\ua \in \pofc$ is arbitrary, $(\vdots) = \duarcub$, and $p+q = r$ with $p>0$ (hence $0 \leq q < r$).  We will show that this diagram is commutative by factoring it into two commutative diagrams as indicated by the dotted arrows.

The left half of the diagram \eqref{top.face.restricted} is the commutative diagram
\begin{equation}\label{top.face.left}
\begin{small}
\nicexy@C-.3cm{L\Bigl[\sO\duarcub \otimes A^{t-1}_{[\ua]} \otimes X^{p} \otimes Y^q\Bigr] \ar[r]^-{g_*} \ar[d]|-{\mathrm{comonoidal}} & L\Bigl[\sO(\vdots) \otimes A^{t-1}_{[\ua,pc]} \otimes Y^q\Bigr] \ar[d]|-{\mathrm{comonoidal}}\\ 
L\sO(\vdots) \otimes (LA^{t-1})_{[\ua]} \otimes (LX)^p \otimes (LY)^q \ar[d]_-{(\chi_{A^{t-1}})_*} \ar[r]^-{g_*} & L\sO(\vdots) \otimes (LA^{t-1})_{[\ua,pc]} \otimes (LY)^q  \ar[d]^-{(\chi_{A^{t-1}})_*}\\
L\sO(\vdots) \otimes (\Lbar A^{t-1})_{[\ua]} \otimes (LX)^p \otimes (LY)^q \ar[d]_-{(\fbar,\Id)} \ar[r]^-{g'_*} & L\sO(\vdots) \otimes (\Lbar A^{t-1})_{[\ua,pc]} \otimes (LY)^q  \ar[d]^-{(\fbar,\Id)} &\\
\sP(\vdots) \otimes (\Lbar A^{t-1})_{[\ua]} \otimes (LX)^p \otimes (LY)^q \ar[r]^-{g'_*}
& \sP(\vdots) \otimes (\Lbar A^{t-1})_{[\ua,pc]} \otimes (LY)^q}
\end{small}
\end{equation}
in which the top square is commutative by naturality.  The middle square is commutative by the definition of the map $g'$, and the bottom square is commutative by definition.

It remains to show that the right half of the diagram \eqref{top.face.restricted} is commutative.  First observe that the upper right region of the diagram \eqref{top.face.restricted} can be rewritten as in the commutative diagram:
\begin{equation}\label{upper.right}
\nicexy{L\Bigl[\sO(\vdots) \otimes A^{t-1}_{[\ua,pc]} \otimes Y^q\Bigr] \ar[r]^-{\mathrm{natural}} \ar[d]|-{\mathrm{comonoidal}} & L\Bigl[\sO_{A^{t-1}}\dqcub \otimes Y^q \Bigr]  \ar[d]|-{\mathrm{comonoidal}}\\ 
L\Bigl[\sO(\vdots) \otimes A^{t-1}_{[\ua,pc]}\Bigr] \otimes (LY)^q \ar[r]^-{\mathrm{natural}} &  L\sO_{A^{t-1}}\dqcub \otimes (LY)^q}
\end{equation}
Therefore, to show that the right half of the diagram \eqref{top.face.restricted} is commutative, it is enough to show that the diagram
\begin{equation}\label{right.half.1}
\begin{footnotesize}
\nicexy@R-.3cm@C-1.3cm{L\Bigl[\sO(\vdots) \otimes A^{t-1}_{[\ua,pc]} \otimes Y^q\Bigr] \ar[dr]|-{\mathrm{comonoidal}} \ar[dd]|-{\mathrm{comonoidal}} && L\sO_{A^{t-1}}\dqcub \otimes (LY)^q \ar[dddddd]|-{(f_{t-1},\Id)}\\
& L\Bigl[\sO(\vdots) \otimes A^{t-1}_{[\ua,pc]}\Bigr] \otimes (LY)^q \ar[ur]|-{\mathrm{natural}} \ar[dl]|-{\mathrm{comonoidal}} \ar[dd]|-{L(f,\mathrm{unit})} &\\
L\sO(\vdots) \otimes (LA^{t-1})_{[\ua,pc]} \otimes (LY)^q \ar[dd]_-{(\chi_{A^{t-1}})_*} \ar[dddr]|-{\bigl(Lf,L(\mathrm{unit})\bigr)} &&\\  
& L\Bigl[R\sP(\vdots) \otimes (R\Lbar A^{t-1})_{[\ua,pc]}\Bigr] \otimes (LY)^q \ar[dd]|-{\mathrm{comonoidal}} &\\
L\sO(\vdots) \otimes (\Lbar A^{t-1})_{[\ua,pc]} \otimes (LY)^q  \ar[dd]|-{(\fbar,\Id)}&&\\
&  LR\sP(\vdots) \otimes (LR\Lbar A^{t-1})_{[\ua,pc]} \otimes (LY)^q \ar[dl]|-{\mathrm{counit}}&\\
\sP(\vdots) \otimes (\Lbar A^{t-1})_{[\ua,pc]} \otimes (LY)^q \ar[rr]^-{\mathrm{natural}} && \sP_{\Lbar A^{t-1}}\dqcub \otimes (LY)^q}
\end{footnotesize}
\end{equation}
is commutative.  The left column in \eqref{right.half.1} is equal to the right column in \eqref{top.face.left} (i.e. the middle column in \ref{top.face.restricted}). The path from the top left to the lower right, across the top then down the right side, is isomorphic to the right column in \ref{top.face.restricted}. The right sub-diagram is commutative by the definition of $f_{t-1}$.  The top left triangle (with three comonoidal maps) and the middle left triangle (with two comonoidal maps) are commutative by naturality.  The lower left triangle is commutative because, by adjunction, $\fbar : L\sO \to \sP$ is equal to the composite
\[\nicexy{L\sO \ar[r]^-{Lf} & LR\sP \ar[r]^-{\mathrm{counit}} & \sP.}\]
Likewise, the comparison map $\chi_{A^{t-1}} : LA^{t-1} \to \Lbar A^{t-1}$ is equal to the composite
\[\nicexy{LA^{t-1} \ar[r]^-{L(\mathrm{unit})} & LR\Lbar A^{t-1} \ar[r]^-{\mathrm{counit}} & \Lbar A^{t-1}.}\]
Therefore, the diagram \eqref{right.half.1} is commutative.  As discussed above, together with the commutative diagrams \eqref{top.face.left} and \eqref{upper.right}, we conclude that the diagrams \eqref{top.face.restricted} and, therefore, \eqref{top.face} are commutative. The proof for the bottom case is similar, but is much simpler (corresponding to $p=0$ in the proof above).
\end{proof}

\begin{lemma} \label{lemma:2018}
If $i : X \to Y$ is a cofibration between cofibrant objects in $\calm$, and $L : \calm \to \caln$ is as in the statement of Proposition \ref{loa.to.pla}, then the natural map 

\[\Phi_r : L Q^r_{r-1}(i) \to Q^r_{r-1}(Li)\]

is a weak equivalence between cofibrant objects in $\caln$.
\end{lemma}

The map $\Phi_r$ is defined explicitly in the proof below.

\begin{proof}
First, the domain and codomain of $\Phi_r$ are cofibrant because $L$ preserves cofibrations and cofibrant objects, and $Q_{r-1}^r(g)$ is cofibrant whenever $g$ is a cofibration between cofibrant objects. To prove $\Phi_r$ is a weak equivalence, we proceed by induction. For the base case, $r=1$, $\Phi_r$ is an isomorphism, because $Q_0^1(i) = X$, so $LQ_0^1(i) = LX \cong Q_0^1(Li:LX\to LY)$. For the inductive step, we recall $Q_{r-1}^r(i)$ may be inductively constructed (\cite{harper-jpaa} (proof of 7.19)) as a pushout in $\calm$:

\begin{equation*}
\nicexy{
Q_{r-2}^{r-1}(i) \otimes X \ar[r]^-{\Id \otimes i} \ar[d]_-{i^{\boxprod r-1} \otimes \Id_X} & Q_{r-2}^{r-1}(i) \otimes Y \ar[d]\\ 
Y^{\otimes r-1} \otimes X \ar[r] & Q_{r-1}^r(i)}
\end{equation*}

We may construct $Q_{r-1}^r(Li)$ analogously, as a pushout in $\N$. When we apply $L$ to the pushout square above, we achieve the following commutative cube in $\N$, which defines $\Phi_r$, and where the front and back faces are pushout squares (because $L$ is a left adjoint):

\begin{equation} \label{cube:2018}
\nicexy{
L(Q_{r-2}^{r-1}(i) \otimes X) \ar[rr]^-{L(\Id_{Q(i)} \otimes i)} \ar[dd]_-{L(i^{\boxprod r-1} \otimes \Id_X)} \ar[dr]^{\delta_2} && L(Q_{r-2}^{r-1}(i) \otimes Y) \ar'[d][dd] \ar[dr]^{\delta_3}\\
&  Q_{r-2}^{r-1}(Li) \otimes LX \ar[rr]^(0.3){\Id_{Q(Li)} \otimes Li} \ar[dd]_(0.3){(Li)^{\boxprod r-1} \otimes \Id_{LX}} && Q_{r-2}^{r-1}(Li) \otimes LY \ar[dd]\\ 
L(Y^{\otimes r-1} \otimes X) \ar'[r][rr] \ar[dr]_{\delta_1} && L(Q_{r-1}^r(i)) \ar[dr]^{\Phi_r} \\
& (LY)^{\otimes r-1} \otimes LX \ar[rr] && Q_{r-1}^r(Li)}
\end{equation}

In the top face, $\Id \otimes i$ is a cofibration between cofibrant objects, hence so is $L(\Id_{Q(i)} \otimes i)$. Since $Li$ is a cofibration between cofibrant objects, $Q_{r-2}^{r-1}(Li)$ is cofibrant and in the front face, $\Id_{Q(Li)}\otimes Li$ is a cofibration between cofibrant objects. Hence, all eight objects in (\ref{cube:2018}) are cofibrant, and by the Cube Lemma \cite{hovey} (5.2.6), to prove $\Phi_r$ is a weak equivalence, it suffices to prove $\delta_1, \delta_2,$ and $\delta_3$ are weak equivalences. 

The map $\delta_1$ is the iteration of the comonoidal structure map $L^2$. Since $X$ and $Y$ are cofibrant, $\delta_1$ is a composite of weak equivalences, by Definition \ref{def:weak.symmetric.monoidal}(1). The map $\delta_2$ factors as follows:

\begin{equation*}
\nicexy{
L(Q_{r-2}^{r-1}(i) \otimes X) \ar[rr]^{\delta_2} \ar[dr]_{L^2_{Q(i),X}}&& Q_{r-2}^{r-1}(Li) \otimes LX  \\
& L(Q_{r-2}^{r-1}(i)) \otimes LX \ar[ur]_{\Phi_{r-1} \otimes \Id_{LX}} & 
}
\end{equation*}

The comonoidal structure map $L^2_{Q(i),X}$ is a weak equivalence by Definition \ref{def:weak.symmetric.monoidal}(1). The map $\Phi_{r-1}$ is a weak equivalence between cofibrant objects by the inductive hypothesis. Since $LX$ is cofibrant, $-\otimes LX$ is a left Quillen functor and hence preserves weak equivalences between cofibrant objects, by Ken Brown's Lemma \cite{hovey} (1.1.12). It follows that $\delta_2$ is a weak equivalence. Similarly, $\delta_3$ factors as:

\begin{equation*}
\nicexy{
L(Q_{r-2}^{r-1}(i) \otimes Y) \ar[rr]^{\delta_3} \ar[dr]_{L^2_{Q(i),Y}}&& Q_{r-2}^{r-1}(Li) \otimes LY  \\
& L(Q_{r-2}^{r-1}(i)) \otimes LY \ar[ur]_{\Phi_{r-1} \otimes \Id_{LY}} & 
}
\end{equation*}

The same argument as we used for $\delta_2$ proves that $\delta_3$ is a weak equivalence. This completes our inductive proof that $\Phi_r$ is a weak equivalence between cofibrant objects for all $r > 0$.
\end{proof}

\subsection{Main Theorem}

The following theorem is our main result.  Roughly speaking, it says that operadic algebras are homotopically well-behaved with respect to Quillen equivalences.

\begin{theorem}[Lifting Quillen Equivalences]
\label{main.theorem}
Suppose:
\begin{enumerate}
\item $L : \M \adjoint \N : R$ is a nice Quillen equivalence (Def. \ref{def:nice.qeq}).
\item $f : \sO \to R\sP$ is a map of $\fC$-colored operads in $\M$ with $\fC$ a set, $\sO$ an entrywise cofibrant $\fC$-colored operad in $\M$, and $\sP$ an entrywise cofibrant $\fC$-colored operad in $\N$.  The entrywise adjoint $\fbar : L\sO \to \sP$ is an entrywise weak equivalence in $\N$. 
\end{enumerate}
Then the lifted adjunction \eqref{lbar.ocomp.diagram}
\[\nicexy{\algo \ar@<2.5pt>[r]^-{\Lbar} & \algp \ar@<2.5pt>[l]^-{R}}\]
is a Quillen equivalence between the semi-model categories of $\sO$-algebras in $\M$ and of $\sP$-algebras in $\N$ (Theorem \ref{theorem623}).
\end{theorem}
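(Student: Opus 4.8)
The plan is to establish the two clauses of Definition~\ref{quillen.pair} for the lifted adjunction $\Lbar : \algo \adjoint \algp : R$ of \eqref{lbar.ocomp.diagram}, with Proposition~\ref{loa.to.pla} supplying all of the real content.

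First I would check that $(\Lbar, R)$ is a Quillen adjunction, i.e.\ that $R$ preserves fibrations and trivial fibrations. By Theorem~\ref{theorem623} the fibrations and weak equivalences in $\algo$ and $\algp$ are created by the forgetful functors $U$ to the product model categories $\mtoc$ and $\ntoc$; since the right-adjoint square in \eqref{lbar.ocomp.diagram} commutes ($UR = RU$) and $R : \N \to \M$ is right Quillen, hence $R : \ntoc \to \mtoc$ is right Quillen for the product model structures, the functor $R : \algp \to \algo$ preserves fibrations and trivial fibrations. (One could instead observe that $\Lbar$ preserves generating (trivial) cofibrations, using $\Lbar(\sO \comp i) = \sP \comp (Li)$ from \eqref{lbar.ocomp} together with the fact that $L$ is left Quillen.)

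Then I would verify the Quillen-equivalence criterion. Fix a cofibrant $\sO$-algebra $A$ and a fibrant $\sP$-algebra $B$, together with a map $\varphi : \Lbar A \to B$ in $\algp$ and its adjoint $\varphibar : A \to RB$ in $\algo$; the goal is to show that $\varphi$ is a weak equivalence iff $\varphibar$ is. Weak equivalences in both algebra categories are created entrywise, so $\varphibar$ is a weak equivalence iff it is one in $\mtoc$. Now $A$ is entrywise cofibrant by Theorem~\ref{theorem623}(2), and $B$ is entrywise fibrant since it is fibrant in $\algp$, while $(L,R)$ restricts to a Quillen equivalence on the product model categories $\mtoc \adjoint \ntoc$; hence $\varphibar : A \to RB$ is a weak equivalence iff its entrywise adjoint $LA \to B$ is a weak equivalence in $\ntoc$. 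By Example~\ref{comparison.adjoint} that adjoint equals the composite $LA \xrightarrow{\chi_A} \Lbar A \xrightarrow{\varphi} B$, and by Proposition~\ref{loa.to.pla} the comparison map $\chi_A$ is an entrywise weak equivalence in $\N$ for the cofibrant algebra $A$ (it is the value at $\dnothing$ of the entrywise weak equivalence $L\sO_A \to \sP_{\Lbar A}$). Therefore, by $2$-out-of-$3$, $\varphi \comp \chi_A$ is a weak equivalence iff $\varphi$ is one entrywise, i.e.\ iff $\varphi$ is a weak equivalence in $\algp$. Chaining these equivalences yields the theorem.

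The hard part is entirely absorbed into Proposition~\ref{loa.to.pla}: constructing the natural entrywise weak equivalence $L\sO_A \to \sP_{\Lbar A}$ that refines the comparison map, which is exactly what forces $\chi_A$ to be a weak equivalence for cofibrant $A$. Granting that, the argument above is a formal diagram chase; the only points I would be careful about are that $A$ and $B$ carry the required underlying (co)fibrancy — which is precisely Theorem~\ref{theorem623}(2) and the definition of a fibrant object in a semi-model category — and that the created weak equivalences transport correctly across the underlying Quillen equivalence, which is legitimate exactly because the underlying objects are cofibrant, respectively fibrant.
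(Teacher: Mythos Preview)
Your proposal is correct and follows essentially the same approach as the paper's proof: both establish the Quillen adjunction via $UR = RU$, then verify the Quillen-equivalence criterion by invoking Proposition~\ref{loa.to.pla} for the weak equivalence $\chi_A$, Theorem~\ref{theorem623}(2) for entrywise cofibrancy of $A$, Example~\ref{comparison.adjoint} for the factorization of the adjoint as $\varphi \comp \chi_A$, the product Quillen equivalence $\mtoc \adjoint \ntoc$, and $2$-out-of-$3$.
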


Below, we will show that, if $\sO$ and $\sP$ are $\Sigma$-cofibrant, then we can weaken assumption (1) above to only requiring that $(L,R)$ be a weak symmetric monoidal Quillen equivalence and that the domains of the generating cofibrations in $\M$ be cofibrant.

\begin{proof}
Recall that weak equivalences and fibrations in $\algo$ and $\algp$ are defined entrywise in $\M$ and $\N$, respectively.  The lifted adjunction $\Lbar \dashv R$ is a Quillen adjunction--i.e., the right adjoint $R$ preserves fibrations and trivial fibrations--because $UR = RU$ in the diagram \eqref{lbar.ocomp.diagram}.

To see that $\Lbar \dashv R$ is a Quillen equivalence between semi-model categories, suppose $A$ is a cofibrant $\sO$-algebra, $B$ is a fibrant $\sP$-algebra, and $\varphi : \Lbar A \to B \in \algp$.  We want to show that $\varphi$ is a weak equivalence if and only if its adjoint $\varphibar : A \to RB$ is a weak equivalence.  By Proposition \ref{loa.to.pla} the comparison map $\chi_A : LA \to \Lbar A \in \ntoc$ is an entrywise weak equivalence.  By the $2$-out-of-$3$ property, $\varphi$ is a weak equivalence if and only if the composite
\[\nicexy{LA \ar[r]^-{\chi_A}_-{\sim} & \Lbar A \ar[r]^-{\varphi} & B \in \ntoc}\]
is a weak equivalence.  Note that $B \in \ntoc$ is fibrant and that $A \in \mtoc$ is cofibrant by Theorem \ref{theorem623}(2).  Since the entrywise prolongation
\[L : \mtoc \adjoint \ntoc : R\]
is a Quillen equivalence \cite{hirschhorn} (11.6.5(2)), the map $\varphi\chi_A \in \ntoc$ is a weak equivalence if and only if its adjoint $\varphibar : A \to RB \in \mtoc$ is a weak equivalence (Example \ref{comparison.adjoint}).  This proves that $\Lbar \dashv R$ is a Quillen equivalence.
\end{proof}

Note that in Theorem \ref{main.theorem}, we only ask that the colored operads $\sO$ and $\sP$ be \emph{entrywise} cofibrant, instead of the much stronger conditions of being $\Sigma$-cofibrant and admissible \cite{bm03} (section 4). In particular, our operads will almost never be admissible. 

\subsection{$\Sigma$-cofibrant colored operads}

A colored operad $\sO$ is $\Sigma$-cofibrant if it is cofibrant in the product model structure $\symseqc$, where each category $\calm^{\Sigma_{\fC}^{op} \times \{d\}}$ is given the projective model structure \cite{hirschhorn} (11.6.1). If we require $\sO$ and $\sP$ to be $\Sigma$-cofibrant, then we can weaken our conditions on the adjunction $(L,R)$. The results that follow provide an extension of Proposition 12.3.4 in \cite{fresse-book}, which considers one operad acting in two different model categories.

\begin{proposition}
\label{loa.to.pla.Sigma.cof}
Suppose:
\begin{enumerate}
\item $L : \M \adjoint \N : R$ is a weak symmetric monoidal Quillen equivalence (Def. \ref{def:weak.symmetric.monoidal}).
\item $f : \sO \to R\sP$ is a map of $\fC$-colored operads in $\M$ with $\fC$ a set, $\sO$ a $\Sigma$-cofibrant $\fC$-colored operad in $\M$, and $\sP$ a $\Sigma$-cofibrant $\fC$-colored operad in $\N$.  The entrywise adjoint $\fbar : L\sO \to \sP$ is an entrywise weak equivalence in $\N$. 
\item Every generating cofibration in $\M$ has cofibrant domain.
\end{enumerate}
Suppose $A$ is a cofibrant $\sO$-algebra.  Then the map $f : \sO \to R\sP$ induces a natural entrywise weak equivalence
\[\nicexy{L\sO_A \ar[r]^-{f_{\infty}} & \sP_{\Lbar A} \in \symseqcn}\]
whose value at $\dnothing$ is the comparison map $\chi_A : LA \to \Lbar A$ \eqref{comparison.map} evaluated at $d$ for each $d \in \fC$.
\end{proposition}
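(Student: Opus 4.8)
The plan is to transcribe the proof of Proposition~\ref{loa.to.pla} essentially verbatim, with every appeal there to one of the conditions $(\filledstar)$, $(\#)$, $(\clubsuit)$, $(\medstar)$ replaced by the corresponding statement about objects that are projectively cofibrant in an equivariant category $\M^{\Sigma_r}$ or $\N^{\Sigma_r}$; such statements hold in an arbitrary monoidal model category, and it is the $\Sigma$-cofibrancy of $\sO$ and $\sP$ --- rather than any hypothesis on $\M$ or $\N$ --- that makes them applicable here. The only input beyond what is already in the excerpt is the standard ``absorption'' fact (see, e.g., \cite{bm03}, \cite{harper-jpaa}): if $W$ is cofibrant in $\M^{\Sigma_r}$ with the projective model structure, then $W \otimes Z$ with the diagonal $\Sigma_r$-action is again cofibrant in $\M^{\Sigma_r}$ for every cofibrant $Z \in \M$, and the map $W \tensoroversigmar i^{\boxprod r} : W \tensoroversigmar Q^r_{r-1}(i) \to W \tensoroversigmar Y^{\otimes r}$ is a (trivial) cofibration in $\M$ for every (trivial) cofibration $i : X \to Y$ in $\M$; the same holds in $\N$. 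Since $(-)_{\Sigma_r} : \M^{\Sigma_r} \to \M$ is always a left Quillen functor, this fact is exactly what replaces $(\clubsuit)$, $(\filledstar)$, and $(\medstar)$, while $(\#)$ will be recovered from the weak symmetric monoidal Quillen equivalence hypothesis together with the first half of the absorption fact.

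First I would reduce, by the same retract argument as in the proof of Proposition~\ref{loa.to.pla}, to the case where $\initialo \to A$ is a transfinite composition \eqref{a.cell.complex} of pushouts \eqref{at.pushout} of maps $\sO \comp i$ with $i : X \to Y$ a generating cofibration of $\M$ concentrated in a single color; by hypothesis~(3) each such $i$ is a cofibration between cofibrant objects, so each $A^t$ is a cofibrant $\sO$-algebra. Applying $\sO_{(-)}$ to \eqref{a.cell.complex} and using Propositions~\ref{o-sub-empty} and \ref{o-ainfinity}, Lemma~\ref{osubdc}, and the absorption fact --- starting from the fact that $\sO_{A^0} = \sO$ is $\Sigma$-cofibrant --- one shows by induction on $t$ that each $\sO_{A^t}$ is $\Sigma$-cofibrant and that every map in the resulting transfinite composition \eqref{osuba.cell} is entrywise a cofibration between entrywise cofibrant objects in $\M$. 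Applying the prolonged left Quillen equivalence $L : \symseqcm \to \symseqcn$ \cite{hirschhorn} (11.6.5(2)) yields \eqref{losuba.cell}, and applying $\Lbar$ to \eqref{a.cell.complex}--\eqref{at.pushout} (using \eqref{lbar.ocomp}) followed by the same induction with $\sP$ in place of $\sO$ yields \eqref{psub.lbar.star}; all of these are transfinite compositions of entrywise cofibrations between entrywise cofibrant objects in $\N$. Exactly as in the proof of Proposition~\ref{loa.to.pla}, \cite{hirschhorn} (17.9.1) reduces the claim to showing, by induction on $t$, that each vertical map $f_t$ in the ladder \eqref{lostar.to.plbarstar} is a weak equivalence, the base case $f_0 = \fbar$ holding by hypothesis.

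For the induction step I would run the inner argument of the proof of Proposition~\ref{loa.to.pla} without change. Proposition~\ref{o-ainfinity} factors $\sO_{A^{t-1}} \to \sO_{A^t}$ and $\sP_{\Lbar A^{t-1}} \to \sP_{\Lbar A^t}$ as the countable compositions \eqref{osubstar.t}, \eqref{psub.lbarstar.t} of the pushouts \eqref{star.r.t}, \eqref{lbarstar.r.t}; the absorption fact, applied with $W = \sO_{A^{t-1}}\drcub$ (resp.\ $W = \sP_{\Lbar A^{t-1}}\drcub$), which is suitably cofibrant because $\sO_{A^{t-1}}$ (resp.\ $\sP_{\Lbar A^{t-1}}$) is $\Sigma$-cofibrant, shows that all the maps there are entrywise cofibrations between entrywise cofibrant objects in $\Msigmabopd$, resp.\ $\Nsigmabopd$. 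One then forms the cube \eqref{lstarrt.to.lbarstarrt}, whose top face is commutative by Lemma~\ref{top.face.of.cube} (its proof is purely formal), and the Cube Lemma \cite{hovey} (5.2.6) reduces the step to showing that $\alpha$ and $\beta$ are weak equivalences. Factoring $\alpha = \alpha_2 \circ \alpha_1$ as there, with $W = \sO_{A^{t-1}}\drcub$: the map $\alpha_1 = (L^2)_{\Sigma_r}$ is $L\bigl(W \tensoroversigmar Q^r_{r-1}(i)\bigr) \to LW \tensoroversigmar Q^r_{r-1}(Li)$, and since $W$ and $Q^r_{r-1}(i)$ are (entrywise) cofibrant in $\M$, the comonoidal structure map $L^2_{W, Q^r_{r-1}(i)}$ is entrywise a weak equivalence in $\N$ by the weak symmetric monoidal Quillen equivalence hypothesis; it is $\Sigma_r$-equivariant by naturality, its source and target are entrywise cofibrant in $\N^{\Sigma_r}$ by the first half of the absorption fact, so applying the left Quillen functor $(-)_{\Sigma_r}$ keeps it a weak equivalence --- this replaces $(\#)$. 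Likewise $\alpha_2 = f_{t-1} \tensoroversigmar \Id$ is a weak equivalence because $f_{t-1}$ is a weak equivalence between suitably cofibrant objects, so $f_{t-1} \otimes \Id_{Q^r_{r-1}(Li)}$ is a weak equivalence between entrywise cofibrant $\Sigma_r$-objects, which survives $(-)_{\Sigma_r}$ --- this replaces $(\medstar)$. The argument for $\beta$ is identical with $Y^{\otimes r}$ (resp.\ $(LY)^{\otimes r}$) in place of $Q^r_{r-1}(i)$, and $f_\infty\dnothing = (\chi_A)_d$ follows from naturality and Example~\ref{comparison.initial}, just as before.

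The only place the argument genuinely departs from the proof of Proposition~\ref{loa.to.pla}, and the point I expect to require the most care, is showing that $\alpha_1 = (L^2)_{\Sigma_r}$ remains a weak equivalence once $(L,R)$ is only assumed to be a weak symmetric monoidal Quillen equivalence. The mechanism is that a projectively cofibrant $W \in \M^{\Sigma_r}$ ``absorbs'' the tensor, so that $W \otimes Z$ is again projectively cofibrant in $\M^{\Sigma_r}$ for every cofibrant $Z$ (a retract-of-free-cells argument); hence both the source and target of the $\Sigma_r$-equivariant weak equivalence $L^2_{W,Q}$ lie in the projectively cofibrant part of $\N^{\Sigma_r}$, where $(-)_{\Sigma_r}$, being left Quillen, preserves weak equivalences. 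It is the $\Sigma$-cofibrancy of $\sO$ and $\sP$ that guarantees the objects $W = \sO_{A^{t-1}}\drcub$ and $W = \sP_{\Lbar A^{t-1}}\drcub$ produced by the filtration are projectively $\Sigma_r$-cofibrant; everything else is a routine transcription of the proof of Proposition~\ref{loa.to.pla}, with $(\filledstar)$, $(\clubsuit)$, and $(\medstar)$ absorbed into the statements about projectively cofibrant equivariant objects recorded above.
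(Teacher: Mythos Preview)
Your proposal is correct and follows essentially the same approach as the paper: both transcribe the proof of Proposition~\ref{loa.to.pla}, replacing every use of $(\filledstar)$, $(\#)$, $(\clubsuit)$, $(\medstar)$ by the observation that for a projectively $\Sigma_r$-cofibrant $W$ the functor $W \tensoroversigmar -$ is left Quillen, and handling $\alpha_1$, $\alpha_2$ via Ken Brown's Lemma applied to weak equivalences between projectively cofibrant $\Sigma_r$-objects. The only cosmetic difference is that the paper outsources the $\Sigma$-cofibrancy of $\sO_{A^t}$ and $\sP_{\Lbar A^t}$ to the colored version of \cite{harper-gnt} (Proposition~5.17), whereas you propose to establish it inline via Proposition~\ref{o-ainfinity} and the absorption fact; both are valid.
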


\begin{proof}
The proof proceeds exactly as in Proposition \ref{loa.to.pla}. Instead of Lemma \ref{middle-row-lemma}, we use the colored version of Proposition 5.17 in \cite{harper-gnt}, which implies $\sO_{A^t}$ and $\sP_{\overline{L}A^t}$ are $\Sigma$-cofibrant for all $t$.
Instead of $\clubcof$ in \ref{star.r.t}, we use the observation that, for a $\Sigma_r$-projectively cofibrant object $X$, the functor $X \otimes_{\Sigma_r} -$ is left Quillen. In the proof, $X$ is first $\sO_{A^{t-1}}\drcub$ in \eqref{star.r.t} and is later $\sP_{\Lbar A^{t-1}}\drcub$ in \eqref{lbarstar.r.t}. Rather than condition $(\filledstar)$, observe that $X \otimes i^{\boxprod r}$ is a cofibration in the projective model structure on $\M^{\Sigma_r}$, and the domain and codomain are projectively cofibrant.  Thus, after taking $\Sigma_r$-coinvariants, we are left with a cofibration between cofibrant objects in $\M$. Similarly, $X\otimes_{\Sigma_r} (Li)^{\boxprod r}$ is a cofibration between cofibrant objects in $\N$.
 
Finally, when proving the maps $\alpha$ and $\beta$ are weak equivalences, instead of using conditions $(\#)$ and $(\medstar)$, we use that $(L,R)$ is a weak monoidal Quillen pair.
This implies $L^2$ is a weak equivalence between $\Sigma_r$-projectively cofibrant objects (since $L$ induces a left Quillen functor on $\M^{\Sigma_r}$). It follows from Ken Brown's Lemma that $\alpha_1$ is a weak equivalence. The situation for $\alpha_2$ is similar, since $f_{t-1}\otimes \Id$ is a weak equivalence between $\Sigma_r$-projectively cofibrant objects. It follows that $\alpha$ is a weak equivalence, a similar argument shows that $\beta$ is a weak equivalence, and then the double induction demonstrates that $f_\infty$ is a weak equivalence as required.
\end{proof}

Similarly, we have a version of Theorem \ref{main.theorem} for $\Sigma$-cofibrant colored operads:

\begin{theorem}[Lifting Quillen Equivalences for $\Sigma$-Cofibrant Operads]
\label{main.theorem.Sigma}
Suppose:
\begin{enumerate}
\item $L : \M \adjoint \N : R$ is a weak symmetric monoidal Quillen equivalence (Def. \ref{def:weak.symmetric.monoidal}).
\item $f : \sO \to R\sP$ is a map of $\fC$-colored operads in $\M$ with $\fC$ a set, $\sO$ a $\Sigma$-cofibrant $\fC$-colored operad in $\M$, and $\sP$ a $\Sigma$-cofibrant $\fC$-colored operad in $\N$.  The entrywise adjoint $\fbar : L\sO \to \sP$ is an entrywise weak equivalence in $\N$. 
\item Every generating cofibration in $\M$ has a cofibrant domain.
\end{enumerate}
Then the lifted adjunction \eqref{lbar.ocomp.diagram}
\[\nicexy{\algo \ar@<2.5pt>[r]^-{\Lbar} & \algp \ar@<2.5pt>[l]^-{R}}\]
is a Quillen equivalence between the semi-model categories of $\sO$-algebras in $\M$ and of $\sP$-algebras in $\N$.
\end{theorem}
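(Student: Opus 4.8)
The plan is to deduce Theorem \ref{main.theorem.Sigma} from Proposition \ref{loa.to.pla.Sigma.cof} in exactly the way that Theorem \ref{main.theorem} was deduced from Proposition \ref{loa.to.pla}; all of the genuinely homotopical content has already been isolated in Proposition \ref{loa.to.pla.Sigma.cof}, so the theorem itself will be a formal argument. First I would record that, under hypotheses (1)--(3), both $\algo$ and $\algp$ carry cofibrantly generated semi-model structures in which the weak equivalences and fibrations are created, respectively, in $\mtoc$ and $\ntoc$, and in which every cofibrant algebra is entrywise cofibrant; this is the classical transfer theorem for $\Sigma$-cofibrant colored operads (see, e.g., \cite{bm03, fresse-book}), and in contrast to Theorem \ref{theorem623} it does not require $(\clubsuit)$ or any of the other conditions bundled into a nice Quillen equivalence, so the weakened hypotheses on $(L,R)$ cause no trouble here. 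Since $UR = RU$ in the diagram \eqref{lbar.ocomp.diagram}, the right adjoint $R : \algp \to \algo$ preserves fibrations and trivial fibrations, and hence $\Lbar \dashv R$ is a Quillen adjunction.

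For the Quillen equivalence, I would fix a cofibrant $\sO$-algebra $A$, a fibrant $\sP$-algebra $B$, and a map $\varphi : \Lbar A \to B$ of $\sP$-algebras with entrywise adjoint $\varphibar : A \to RB$, and check that $\varphi$ is a weak equivalence if and only if $\varphibar$ is. By Proposition \ref{loa.to.pla.Sigma.cof}, the comparison map $\chi_A : LA \to \Lbar A \in \ntoc$ is an entrywise weak equivalence (its $\dnothing$-component being $\chi_A$ evaluated at each color $d$), so $2$-out-of-$3$ reduces the question to whether the composite $\varphi\chi_A : LA \to B \in \ntoc$ is an entrywise weak equivalence. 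By Example \ref{comparison.adjoint}, $\varphi\chi_A$ is precisely the entrywise adjoint of $\varphibar$. Now $A$ is entrywise cofibrant, since cofibrant algebras over $\Sigma$-cofibrant operads are entrywise cofibrant, and $B$ is entrywise fibrant; because the entrywise prolongation $L : \mtoc \adjoint \ntoc : R$ is a Quillen equivalence \cite{hirschhorn} (11.6.5(2)), it follows that $\varphi\chi_A$ is a weak equivalence in $\ntoc$ if and only if $\varphibar$ is a weak equivalence in $\mtoc$. This proves that $\Lbar \dashv R$ is a Quillen equivalence.

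I do not expect a serious obstacle in the theorem itself. The delicate homotopical bookkeeping---the filtration of pushouts of operadic algebras, the Cube Lemma argument, and the fact that for a $\Sigma_r$-projectively cofibrant object $X$ the functor $X \tensoroversigmar (-)^{\boxprod r}$ is left Quillen, which is what replaces conditions $(\clubsuit)$, $(\filledstar)$, $(\#)$, and $(\medstar)$, together with Ken Brown's Lemma and the weak monoidal Quillen pair axioms---all sits inside the proof of Proposition \ref{loa.to.pla.Sigma.cof}. The one point that merits care is checking that the cited transfer theorem genuinely applies given hypothesis (3) alone, so that $A$ can be presented as a cell complex built from maps $\sO \comp i$ with $i$ a cofibration between cofibrant objects of $\M$---this is exactly the input that Proposition \ref{loa.to.pla.Sigma.cof} consumes---and, via the identity \eqref{lbar.ocomp}, that the image of such a presentation under $\Lbar$ is a cell complex of the analogous shape on the $\sP$-side.
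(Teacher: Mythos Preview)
Your proposal is correct and follows essentially the same approach as the paper's proof: reduce to Proposition \ref{loa.to.pla.Sigma.cof}, use $UR = RU$ for the Quillen adjunction, and then run the $2$-out-of-$3$ argument with the entrywise prolonged Quillen equivalence exactly as in Theorem \ref{main.theorem}. The only cosmetic difference is that the paper cites Theorem 6.3.1 of \cite{white-yau} (rather than \cite{bm03, fresse-book}) for the semi-model structures on $\algo$ and $\algp$ and for the fact that cofibrant $\sO$-algebras are entrywise cofibrant.
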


\begin{proof}
The proof proceeds exactly as in Theorem \ref{main.theorem}, but uses Proposition \ref{loa.to.pla.Sigma.cof} instead of Proposition \ref{loa.to.pla}. The existence of the semi-model structures on $\algo$ and $\algp$ is now due to Theorem 6.3.1 in \cite{white-yau}, which also proves that cofibrant $\sO$-algebras are cofibrant in $\mtoc$, avoiding the need for Theorem \ref{theorem623} and $(\clubsuit)$. 
\end{proof}

\section{Special Cases: Rectification and Derived Change of Category} \label{sec:rect-and-change}

In this section, we discuss special cases of our main results, Theorems \ref{main.theorem} and \ref{main.theorem.Sigma}. We begin with the strongest possible condition on $L$ (that it is the identity), and successively weaken our conditions on $L$. We see that rectification, change of rings, and change of underlying model category (i.e., lifting Quillen equivalences) are all special cases of the same general framework.

\subsection{Rectification}

Restricting Theorem \ref{main.theorem} to the special case $L = R = \Id$ (so condition $(\#)$ (Def. \ref{def:sharp}) holds automatically), we obtain the following rectification result for entrywise cofibrant operads.

\begin{corollary}[Rectification of Operadic Algebras]
\label{cor.rectification}
Suppose $\M$ is a cofibrantly generated monoidal model category satisfying the conditions $(\medstar)$, $(\filledstar)$ (Def. \ref{def:star}), and $(\clubsuit)$ (Def. \ref{def:club}),  in which every generating cofibration has a cofibrant domain.  Suppose $\fC$ is a set, and $f : \sO \to \sP$ is a map of entrywise cofibrant $\fC$-colored operads that is an entrywise weak equivalence in $\M$.  Then the induced adjunction
\[\nicexy{\algo \ar@<2.5pt>[r]^-{f_!} & \algp \ar@<2.5pt>[l]^-{f^*}}\]
is a Quillen equivalence between semi-model categories.
\end{corollary}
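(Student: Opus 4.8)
The plan is to deduce this corollary directly from Theorem \ref{main.theorem}, applied in the degenerate case $\N = \M$ with $L = R = \Id_{\M}$. The first step is to check that the identity adjunction $\Id : \M \adjoint \M : \Id$ is a nice Quillen equivalence in the sense of Definition \ref{def:nice.qeq}. It is trivially a Quillen equivalence, and since $\Id$ is strong symmetric monoidal the comonoidal structure map \eqref{comonoidal.map} is an isomorphism (hence a weak equivalence) and the composite \eqref{unit.adjoint} is just a chosen cofibrant replacement $q$ of $\tensorunit$; thus the identity adjunction is a weak symmetric monoidal Quillen equivalence. Condition $(\#)$ holds because $\Id$ is strong symmetric monoidal (the first case noted after Definition \ref{def:sharp}), while $(\filledstar)$ and $(\clubsuit)$ in $\M = \N$, the condition $(\medstar)$ in $\N = \M$, and the requirement that every generating cofibration of $\M$ have cofibrant domain are precisely the standing hypotheses of the corollary. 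Hence hypothesis (1) of Theorem \ref{main.theorem} is satisfied.

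Next I would verify hypothesis (2) of Theorem \ref{main.theorem}. With $R = \Id$, a map $f : \sO \to \sP$ of entrywise cofibrant $\fC$-colored operads is the same datum as a map $\sO \to R\sP$, and its entrywise adjoint $\fbar : L\sO = \sO \to \sP$ is $f$ itself, which is an entrywise weak equivalence by hypothesis. So hypothesis (2) holds as well.

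It then remains only to identify the lifted adjunction. When $R = \Id$, the induced functor $R : \algp \to \algo$ of Definition \ref{lifted.right.adjoint} is exactly restriction of scalars along $f$, i.e. $f^*$, and its left adjoint $\Lbar$ is by definition $f_!$. Therefore Theorem \ref{main.theorem} yields that $(f_!, f^*)$ is a Quillen equivalence between the semi-model categories $\algo$ and $\algp$ provided by Theorem \ref{theorem623}, which is the assertion.

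There is essentially no genuine obstacle here: the entire content of the corollary has been packaged into Theorem \ref{main.theorem}, and the only thing that requires care is the bookkeeping that the identity adjunction really satisfies every clause of Definition \ref{def:nice.qeq} — in particular that $(\#)$ is automatic for strong monoidal left adjoints — so that the proof of Theorem \ref{main.theorem}, and with it Proposition \ref{loa.to.pla}, applies verbatim. One could alternatively inline that argument, but invoking Theorem \ref{main.theorem} is cleaner.
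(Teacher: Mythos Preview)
Your proposal is correct and takes essentially the same approach as the paper: the paper simply states that the corollary is Theorem \ref{main.theorem} restricted to the special case $L = R = \Id$, noting that condition $(\#)$ then holds automatically. Your write-up is just a more detailed verification of this specialization.
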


The right adjoint $f^* : \algp \to \algo$ is given by restriction along the map $f$.  The left adjoint $f_!$ may be constructed as a certain coequalizer \cite{bm07} (section 4).

\begin{example}
Corollary \ref{cor.rectification} applies when $\M = \Chkplus$, the category of non-negatively graded chain complexes of $k$-modules for a characteristic $0$ field $k$, and $f : \sO \to \sP$ is the cofibrant replacement:
\begin{enumerate}
\item  $\sA_{\infty} \to \As$, where $\sA_{\infty}$ is the operad for $A_{\infty}$-algebras \cite{stasheff} and $\As$ is the operad for differential graded algebras.
\item $\sE_{\infty} \to \Com$, where $\sE_{\infty}$ is an $E_{\infty}$ operad \cite{may72} and $\Com$ is the operad for commutative dg algebras.
\item $\sL_{\infty} \to \Lie$, where $\sL_{\infty}$ is the operad for $L_{\infty}$-algebras and $\Lie$ is the operad for dg Lie algebras \cite{fmy,lada-markl}.
\end{enumerate}
\end{example}

\begin{remark}
Similar rectification results for \emph{admissible $\Sigma$-cofibrant} operads--as opposed to entrywise cofibrant operads--have been obtained by Berger and Moerdijk \cite{bm03,bm07}.  Admissibility means that the category of algebras over the operad in question has a model category structure in which the fibrations and weak equivalences are defined entrywise in the underlying category. Another rectification result for admissible operads is in \cite{dmitri}. Concrete examples (e.g. \cite{batanin-white-eilenberg}) demonstrate that admissibility is a strong condition, and one should instead expect only semi-model structures.  A rectification result for $1$-colored, entrywise cofibrant, non-symmetric operads is \cite{muro11} (1.3).  For operads in symmetric spectra, a rectification result is \cite{em06} (1.4); a $1$-colored version is \cite{agt1} (1.4).
\end{remark}

\begin{example} \label{ex:change-of-rings}
Rectification results specialize to change of rings results. Let $\M$ be a cofibrantly generated  monoidal model category whose generating cofibrations have cofibrant domains.  Let $f:R\to T$ be a weak equivalence of monoids in $\M$ with $R$ and $T$ cofibrant as objects in $\M$.
\begin{enumerate}
\item Then $f$ induces a Quillen equivalence between the semi-model categories of $R$-modules and $T$-modules. 
\item If $f$ is a map of commutative monoids, then it induces a Quillen equivalence between the semi-model categories of $R$-algebras and $T$-algebras.
\item Suppose $f$ is a map of commutative monoids and $\M$ satisfies the conditions in Corollary \ref{cor.rectification}.  Then $f$ induces a Quillen equivalence between the semi-model categories of commutative $R$-algebras and commutative $T$-algebras.
\end{enumerate}
The first two of these examples date back to \cite{ss}, and can be viewed as special cases of rectification for $\Sigma$-cofibrant operads.   As discussed in Theorem \ref{main.theorem.Sigma}, when the operads involved are $\Sigma$-cofibrant, the conditions $(\medstar)$, $(\filledstar)$ (Def. \ref{def:star}), and $(\clubsuit)$ (Def. \ref{def:club}) are not needed.  The last example for commutative algebras requires the theory of entrywise cofibrant operads as in Theorem \ref{main.theorem}.
\end{example}

\subsection{Modules}

Each (commutative) monoid $T$ in $\M$ admits a category $\Mod(T)$ of left $T$-modules \cite{maclane} (VII.4).  If $L : \M \to \N$ is a lax (symmetric) monoidal functor, then $LT$ is a (commutative) monoid in $\N$, so it admits a category $\Mod(LT)$ of left $LT$-modules.  Using the respective operads for left $T$-modules and left $LT$-modules, Theorem \ref{main.theorem.Sigma} yields the following result.  It has both a commutative version and an associative version, the latter of which is closely related to \cite{ss03} (3.12(1)).

\begin{corollary}[Modules]
\label{cor.com.module}
Suppose  $L : \M \adjoint \N : R$ is a weak (symmetric) monoidal Quillen equivalence with $L$ lax (symmetric) monoidal, and $T$ is a (commutative) monoid that is cofibrant as an object in $\M$. Assume that the domains of the generating cofibrations in $\M$ are cofibrant. Then there is an induced Quillen equivalence
\[\nicexy{\Mod(T) \ar@<2.5pt>[r]^-{\Lbar} & \Mod(LT) \ar@<2.5pt>[l]^-{R}}\]
between the semi-model categories of left $T$-modules in $\M$ and of left $LT$-modules in $\N$.
\end{corollary}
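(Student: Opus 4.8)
The plan is to exhibit $\Mod(T)$ and $\Mod(LT)$ as categories of algebras over one-colored operads and then apply the $\Sigma$-cofibrant version of the main result, Theorem~\ref{main.theorem.Sigma}. A (commutative) monoid $T$ in $\M$ determines a one-colored operad $\sO_T$ in $\M$ concentrated in arity $1$: set $\sO_T(1) = T$, with operadic composition given by the multiplication of $T$, and $\sO_T(n) = \varnothing$ for $n \neq 1$. Unwinding the definition of an algebra over an operad (Definition~\ref{colored-operad-algebra}), an $\sO_T$-algebra is precisely a left $T$-module, so $\alg(\sO_T) = \Mod(T)$; since $L$ is lax (symmetric) monoidal, $LT$ is again a (commutative) monoid, this time in $\N$, and the same recipe produces an operad $\sO_{LT}$ in $\N$ with $\alg(\sO_{LT}) = \Mod(LT)$. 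Because $\sO_T$ is concentrated in arity $1$, where $\Sigma_1$ is trivial, it is $\Sigma$-cofibrant precisely because $T$ is cofibrant in $\M$; likewise $\sO_{LT}$ is $\Sigma$-cofibrant because $LT$ is cofibrant in $\N$, as $L$ is a left Quillen functor. This is exactly what puts us in the scope of the $\Sigma$-cofibrant version, and lets us avoid asking that $(L,R)$ be a \emph{nice} Quillen equivalence, i.e.\ avoid the conditions $(\medstar)$, $(\filledstar)$, and $(\clubsuit)$.

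With these identifications in hand, I would invoke Theorem~\ref{main.theorem.Sigma} with $\sO = \sO_T$ and $\sP = \sO_{LT}$. Its hypothesis~(1) is the weak symmetric monoidal Quillen equivalence assumed in the corollary, and hypothesis~(3) on the generating cofibrations of $\M$ is likewise assumed. For hypothesis~(2) one needs a map of $\fC$-colored operads $f : \sO_T \to R\sO_{LT}$ in $\M$, where $R\sO_{LT}$ is the operad obtained by applying $R$ entrywise to $\sO_{LT}$ (see \cite{jy2}), concentrated in arity $1$ with value the monoid $R(LT)$. I would take $f$ to be the adjunction unit $\eta$ applied entrywise; its only nontrivial component is $\eta_T : T \to RLT$ in arity $1$, and by the triangle identity the entrywise adjoint $\fbar : L\sO_T \to \sO_{LT}$ has arity-$1$ component $\id_{LT}$, so it is an entrywise weak equivalence. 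Theorem~\ref{main.theorem.Sigma} then endows $\Mod(T)$ and $\Mod(LT)$ with semi-model structures and shows that the lifted adjunction $\Lbar : \alg(\sO_T) \adjoint \alg(\sO_{LT}) : R$ is a Quillen equivalence between them; tracing Definition~\ref{lifted.right.adjoint} through the identifications shows that this $R$ is the functor carrying an $LT$-module $N$ to $RN$---which is an $R(LT)$-module by lax monoidality of $R$---with $T$ acting through $\eta_T$, i.e.\ the functor denoted $R$ in the statement, with $\Lbar$ its left adjoint. The commutative version is proved identically; one only reads ``symmetric'' throughout so that $LT$ is a commutative monoid, and the module operad $\sO_T$ is the same in either case.

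The step I expect to be the main obstacle is checking that $f$---equivalently, its arity-$1$ component, the unit $\eta_T : T \to RLT$---is genuinely a map of operads, i.e.\ a monoid homomorphism for the monoid structures in sight. The monoid structure on $RLT$ is obtained by applying $R$'s lax symmetric monoidal structure to the monoid $LT$, whose own multiplication uses the lax symmetric monoidal structure on $L$ that the corollary postulates \emph{in addition} to the comonoidal structure~\eqref{comonoidal.map} carried by $L$ as half of a weak monoidal Quillen pair. So this is the one place at which those two monoidal structures on $L$ must be shown to cooperate (together with the analogous statement for units). I would organize the argument to use only the compatibility between them that is actually available in the stated generality; this is transparent when $L$ is strong monoidal. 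Everything else---the identification $\alg(\sO_T) = \Mod(T)$, the $\Sigma$-cofibrancy of $\sO_T$, and the unwinding of Definition~\ref{lifted.right.adjoint} into module language---is routine.
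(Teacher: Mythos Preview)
Your proposal is correct and follows essentially the same approach as the paper: both identify modules as algebras over the arity-$1$ operad with value $T$ (resp.\ $LT$), note these are $\Sigma$-cofibrant, take $f$ to be the unit $\eta_T : T \to RLT$ so that $\fbar = \id_{LT}$, and invoke Theorem~\ref{main.theorem.Sigma}. The paper's proof is terser and does not explicitly address your worry about $\eta_T$ being a monoid map; your flagging of this compatibility issue between the two monoidal structures on $L$ is a reasonable point of care that the paper simply elides.
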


\begin{proof}
The proof is the same in the symmetric and non-symmetric contexts. We need to check condition (2) in Theorem \ref{main.theorem.Sigma}. The $1$-colored operad $\sO$ for left $T$-modules has
\[\sO(n) = \begin{cases} T & \text{ if $n=1$};\\ \varnothing & \text{ if $n\not= 1$.}\end{cases}\]
Likewise, the only non-$\varnothing$ entry in the operad for $LT$-modules is $\sP(1) = LT$.  Both $\sO$ and $\sP$ are $\Sigma$-cofibrant.  The map $f : \sO \to R\sP$ is determined by the unit of the adjunction $T \to RLT$, and $\fbar : L\sO \to \sP$ is the identity map.  
\end{proof}

\subsection{Monoids and Algebras}

Consider Theorem \ref{main.theorem} restricted to the special case with $\sO$ the operad for associative monoids \cite{maclane}(VII.3) in $\M$ and $\sP$ the operad for associative monoids in $\N$.  In this setting, we recover the following result from \cite{ss03} (3.12(3)) with slightly different assumptions.  The slight difference in assumptions is due to the generality of our result. 

\begin{corollary}[Monoids]
\label{cor.schwede.shipley}
Suppose  $L : \M \adjoint \N : R$ is a weak monoidal Quillen equivalence, that the tensor units in $\M$ and $\N$ are cofibrant, and that the domains of the generating cofibrations in $\M$ are cofibrant.  Then there is an induced Quillen equivalence
\[\nicexy{\Monoid(\M) \ar@<2.5pt>[r]^-{\Lbar} & \Monoid(\N) \ar@<2.5pt>[l]^-{R}}\]
between the semi-model categories of monoids in $\M$ and in $\N$.
\end{corollary}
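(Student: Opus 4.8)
The plan is to realize $\Monoid(\M)$ and $\Monoid(\N)$ as categories of algebras over entrywise cofibrant non-symmetric operads and then apply the non-symmetric analogue of Theorem \ref{main.theorem.Sigma}. That analogue is proved by exactly the argument of Proposition \ref{loa.to.pla.Sigma.cof} and Theorem \ref{main.theorem.Sigma}: for a non-symmetric operad the pushout filtration of Proposition \ref{free-pushout-filtration} never introduces symmetric-group coinvariants, so only the pushout product axiom intervenes, the role of $\Sigma$-cofibrancy is played by entrywise cofibrancy, and one needs only a weak monoidal (not weak symmetric monoidal) Quillen pair. Concretely, I would let $\sO$ be the associative operad in $\M$, regarded as a non-symmetric $1$-colored operad, so that $\sO(n) = \tensorunitm$ for every $n \geq 0$ and $\alg(\sO) = \Monoid(\M)$; similarly $\sP$ is the associative operad in $\N$, with $\sP(n) = \tensorunitn$ and $\alg(\sP) = \Monoid(\N)$. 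Since $\tensorunitm$ and $\tensorunitn$ are cofibrant by hypothesis, $\sO$ and $\sP$ are entrywise cofibrant; hypothesis (3) of Theorem \ref{main.theorem.Sigma}, cofibrant domains for the generating cofibrations of $\M$, is assumed; and $(L,R)$ is by hypothesis a weak monoidal Quillen equivalence.

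The next step is to produce the operad map and verify hypothesis (2) of Theorem \ref{main.theorem.Sigma}. The lax monoidal functor $R$ sends a monoid $S$ in $\N$ to the monoid $RS$ in $\M$ whose multiplication is $R^2$ followed by $R$ of the multiplication of $S$, and whose unit is $R^0 : \tensorunitm \to R\tensorunitn$ followed by $R$ of the unit of $S$. By Definition \ref{lifted.right.adjoint}(1) this functor $\Monoid(\N) \to \Monoid(\M)$ is the induced functor $R$ attached to a map $f : \sO \to R\sP$ of non-symmetric operads in $\M$, where $R\sP$ denotes $R$ applied entrywise to $\sP$. The one computation that must be carried out is to identify, for each $n$, the arity-$n$ component $f_n : \tensorunitm \to R\tensorunitn$ (note $\sO(n) = \tensorunitm$ and $(R\sP)(n) = R\tensorunitn$) with the lax unit map $R^0$, up to the evident coherence isomorphisms. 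This is forced: the non-symmetric associative operad is generated by its nullary and binary operations, and $f$ is pinned down on those by the operad unit of $R\sP$ in arities $0$ and $1$ and by the formula above for the multiplication of $RS$ in arity $2$. It follows that the entrywise adjoint $\fbar : L\sO \to \sP$ has every component $\fbar_n : L\tensorunitm \to \tensorunitn$ equal, up to coherence, to $\Rbar^0$, the adjoint of $R^0$.

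Third, I would check that $\fbar$ is an entrywise weak equivalence. Since $\tensorunitm$ is cofibrant, in Definition \ref{def:weak.symmetric.monoidal}(2) one may take the cofibrant replacement $q : Q\tensorunitm \to \tensorunitm$ to be the identity of $\tensorunitm$, whereupon the composite \eqref{unit.adjoint} reduces to $\Rbar^0 : L\tensorunitm \to \tensorunitn$; hence $\Rbar^0$ is a weak equivalence in $\N$, and therefore so is each $\fbar_n$. This is precisely hypothesis (2) of Theorem \ref{main.theorem.Sigma}, so that theorem (in its non-symmetric form) applies and shows that the lifted adjunction \eqref{lbar.ocomp.diagram} is a Quillen equivalence between the semi-model categories $\alg(\sO) = \Monoid(\M)$ and $\alg(\sP) = \Monoid(\N)$, which is the assertion.

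I expect the only genuine obstacle to be the bookkeeping identification of $f_n$, equivalently of $\fbar_n$, with the lax unit comparison map $R^0$, respectively $\Rbar^0$ — a coherence check involving the entrywise operad structure on $R\sP$. Once that is settled, the cofibrancy of the two tensor units simultaneously supplies the entrywise cofibrancy of $\sO$ and $\sP$ and the weak-equivalence hypothesis on $\fbar$, and everything else is a direct appeal to the lifting theorem already established.
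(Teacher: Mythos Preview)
Your argument is correct, but the paper takes a shorter route that avoids introducing any non-symmetric analogue of Theorem \ref{main.theorem.Sigma}. The paper works entirely with \emph{symmetric} operads: it takes $\sO(n) = \coprod_{\Sigma_n}\tensorunitm$ and $\sP(n) = \coprod_{\Sigma_n}\tensorunitn$, the images of the set-theoretic associative operad under the strong symmetric monoidal functors $S \mapsto \coprod_S \tensorunitm$ and $S \mapsto \coprod_S \tensorunitn$. Because $\tensorunitm$ and $\tensorunitn$ are cofibrant, these operads are $\Sigma$-cofibrant (each level is a free $\Sigma_n$-object on a cofibrant object), so Theorem \ref{main.theorem.Sigma} applies as stated. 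The entrywise adjoint $\fbar$ at level $n$ is then simply the coproduct $\coprod_{\Sigma_n} \Rbar^0$, which is a weak equivalence since $\Rbar^0 : L\tensorunitm \to \tensorunitn$ is a weak equivalence between cofibrant objects; no additional coherence bookkeeping is required.

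The trade-off: the paper's approach is more economical, invoking an already-proved theorem with no new machinery, but nominally Theorem \ref{main.theorem.Sigma} asks for a weak \emph{symmetric} monoidal Quillen equivalence while the corollary assumes only weak monoidal. Your non-symmetric route makes transparent why the symmetry hypothesis is unnecessary here, at the cost of having to assert (correctly, but without a formal statement in the paper) that the proof of Theorem \ref{main.theorem.Sigma} goes through verbatim for non-symmetric operads with entrywise cofibrancy in place of $\Sigma$-cofibrancy. In the paper's framework the same point is implicit in the $\Sigma$-freeness of $\coprod_{\Sigma_n}\tensorunitm$.
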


\begin{proof}
As above, we need to check condition (2) in Theorem \ref{main.theorem.Sigma}.  The $1$-colored operad $\sO$ for monoids in $\M$ has
\[\sO(n) = \coprodover{\Sigma_n}\, \tensorunitm\] 
and similarly the $1$-colored operad $\sP$ for monoids in $\N$ has $\sP(n) = \coprod_{\Sigma_n} \tensorunitn$.  In fact, $\sO$ is the image of the associative operad in the category of sets under the strong symmetric monoidal functor
\[\nicexy{\set \ar[r] & \M,}\quad \nicexy{S \ar@{|->}[r] & \coprodover{S}\, \tensorunitm}\]
and similarly for $\sP$.  Both $\sO$ and $\sP$ are $\Sigma$-cofibrant.  The map $\Rbar^0 : L\tensorunitm \to \tensorunitn$ \eqref{unit.adjoint} is a weak equivalence between cofibrant objects in $\N$.  So the coproduct map
\[\nicexy{L\sO(n) = L\Bigl(\coprodover{\Sigma_n}\, \tensorunitm\Bigr) \cong \coprodover{\Sigma_n}\, L\tensorunitm \ar[r]^-{\coprod \Rbar^0} & \coprodover{\Sigma_n}\, \tensorunitn = \sP(n)}\]
is also a weak equivalence.
\end{proof}

A commutative monoid $T$ also admits categories $\alg(T)$ (and $\calg(T)$) of (commutative) $T$-algebras, which are (commutative) monoids in the category of $T$-modules \cite{ss} (p.499). An analogous proof to Corollary \ref{cor.schwede.shipley} demonstrates:

\begin{corollary}[Algebras]
\label{cor.algebras}
Suppose  $L : \M \adjoint \N : R$ is a weak monoidal Quillen equivalence with $L$ lax monoidal, and $T$ is a commutative monoid that is cofibrant as an object in $\M$.  Then there is an induced Quillen equivalence
\[\nicexy{\Alg(T) \ar@<2.5pt>[r]^-{\Lbar} & \Alg(LT) \ar@<2.5pt>[l]^-{R}}\]
between the semi-model categories of $T$-algebras in $\M$ and of $LT$-algebras in $\N$.
\end{corollary}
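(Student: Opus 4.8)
The plan is to imitate the proof of Corollary \ref{cor.schwede.shipley} essentially line by line, replacing the associative operad by the $1$-colored operad that governs $T$-algebras. First I would record the explicit form of that operad. Writing $\As$ for the associative operad in $\M$, with $\As(n) = \coprodover{\Sigma_n}\tensorunit$, the operad $\sO$ whose algebras are $T$-algebras in $\M$ has $n$-ary entry
\[\sO(n) = T \otimes \As(n) \;\cong\; \coprodover{\Sigma_n} T,\]
with $\Sigma_n$ permuting the copies of $T$ freely and acting trivially on each $T$; its operadic composition is assembled from the iterated multiplication of $T$ together with the symmetry isomorphisms, and it is here that commutativity of $T$ is used (to ensure equivariance). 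One checks that $\alg(\sO)$ is the category of monoids in $\Mod(T)$, i.e.\ $\Alg(T)$, the free $\sO$-algebra on $X$ being $\coprod_{n\ge 0} T \otimes X^{\otimes n}$. Since $L$ is lax monoidal, $LT$ is a monoid in $\N$, and in exactly the same way the operad $\sP$ for $LT$-algebras in $\N$ has $\sP(n) \cong \coprodover{\Sigma_n} LT$ with $\alg(\sP) = \Alg(LT)$.

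Next I would check the hypotheses of Theorem \ref{main.theorem.Sigma}, the substantive one being condition (2). Because $T$ is cofibrant in $\M$ and $L$ is a left Quillen functor, $LT$ is cofibrant in $\N$. The free functor $\M \to \M^{\Sigma_n}$ sending $Z$ to $\coprodover{\Sigma_n} Z$ (with the translation action) is left Quillen for the projective model structure, so $\sO(n) \cong \coprodover{\Sigma_n} T$ is projectively cofibrant in $\M^{\Sigma_n}$, and likewise $\sP(n)$ is projectively cofibrant in $\N^{\Sigma_n}$; hence $\sO$ and $\sP$ are $\Sigma$-cofibrant. For the comparison datum, $L$ commutes with coproducts, so the canonical map
\[L\sO(n) = L\Bigl(\coprodover{\Sigma_n} T\Bigr) \;\cong\; \coprodover{\Sigma_n} LT = \sP(n)\]
is an isomorphism; it assembles into an isomorphism of operads $\fbar : L\sO \xrightarrow{\cong} \sP$, since the $\Sigma_n$-equivariance is visibly preserved (the action merely permutes the summands) and the operadic composition matches because that of $\sP$ is built from the multiplication of $LT$, which is in turn obtained from that of $T$ via the lax monoidal structure maps of $L$. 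Taking $f : \sO \to R\sP$ to be the operad map induced by the monoid map $T \to RLT$ (whose entrywise adjoint is $\fbar$), as in the proof of Corollary \ref{cor.com.module}, condition (2) holds, with $\fbar$ an entrywise weak equivalence because it is an isomorphism.

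Applying Theorem \ref{main.theorem.Sigma} then yields a Quillen equivalence $\Lbar : \alg(\sO) \adjoint \alg(\sP) : R$, which is the asserted $\Alg(T) \adjoint \Alg(LT)$ once one unwinds that its right adjoint is entrywise application of $R$ followed by restriction of scalars along $T \to RLT$. I expect no homotopical obstacle — all the real work is carried out by Theorem \ref{main.theorem.Sigma} — and the only points that demand care rather than a citation are the ones in the first two paragraphs: identifying the operad for $T$-algebras as $T \otimes \As$, and confirming that applying the lax monoidal functor $L$ to it recovers the operad for $LT$-algebras, so that $\fbar$ is an isomorphism of operads and not merely of symmetric sequences. (As in Corollaries \ref{cor.com.module} and \ref{cor.schwede.shipley}, one should also assume the domains of the generating cofibrations of $\M$ are cofibrant, so that Theorem \ref{main.theorem.Sigma} is applicable.)
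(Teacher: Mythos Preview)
Your proposal is correct and follows essentially the same route as the paper: identify the $1$-colored operad for $T$-algebras, check it is $\Sigma$-cofibrant, and apply Theorem \ref{main.theorem.Sigma}. The paper phrases the operad as the enveloping operad $\sO_T$ of the associative operad at $T$ and cites \cite{harper-gnt} (5.17) for $\Sigma$-cofibrancy, whereas you give the explicit description $\sO(n)\cong\coprod_{\Sigma_n} T$ and argue $\Sigma$-cofibrancy directly via the free $\Sigma_n$-object functor; these are the same object and the same conclusion. One small overclaim: you assert that $\fbar : L\sO \to \sP$ is an isomorphism \emph{of operads}, but with $L$ only lax monoidal the operad structure on $L\sO$ and the compatibility of $\fbar$ with compositions need care---fortunately this is irrelevant, since Theorem \ref{main.theorem.Sigma} only requires $\fbar$ to be an entrywise weak equivalence, and your entrywise isomorphism suffices.
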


\begin{proof}
As above, we check condition (2) in Theorem \ref{main.theorem.Sigma}. The $1$-colored operad for $T$-algebras is the enveloping operad $\sO_T$, where $\sO$ is the operad for monoids. In this operad, $T$ replaces the unit $\tensorunitm$. Since $T$ is cofibrant and $\sO$ is $\Sigma$-cofibrant, $\sO_T$ is $\Sigma$-cofibrant by Proposition 5.17 in \cite{harper-gnt}. Similarly, $\sO_{LT}$ is $\Sigma$-cofibrant and $LT$ replaces $\tensorunitn$. The proof now proceeds precisely as above.
\end{proof}

\subsection{Commutative Monoids, Commutative Algebras, and Non-Symmetric Operads}
The following two special cases of Theorem \ref{main.theorem} are the commutative versions of the previous two results.

\begin{corollary}[Commutative Monoids]
\label{cor.com.monoids}
Suppose  $L : \M \adjoint \N : R$ is a nice Quillen equivalence (Def. \ref{def:nice.qeq}) and that the tensor units in $\M$ and $\N$ are cofibrant.  Then there is an induced Quillen equivalence
\[\nicexy{\CMonoid(\M) \ar@<2.5pt>[r]^-{\Lbar} & \CMonoid(\N) \ar@<2.5pt>[l]^-{R}}\]
between the semi-model categories of commutative monoids in $\M$ and in $\N$.
\end{corollary}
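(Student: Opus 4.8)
The plan is to apply Theorem \ref{main.theorem} to the commutative operads, in the spirit of the proof of Corollary \ref{cor.schwede.shipley} but using the full (non-$\Sigma$-cofibrant) main theorem. Write $\sO = \Com$ for the one-colored commutative operad in $\M$, so that $\sO(n) = \tensorunit^{\M}$ with trivial $\Sigma_n$-action for each $n \ge 0$ and $\alg(\sO) = \CMonoid(\M)$; similarly write $\sP = \Com$ in $\N$, with $\sP(n) = \tensorunit^{\N}$ and $\alg(\sP) = \CMonoid(\N)$. As in Corollary \ref{cor.schwede.shipley}, $\sO$ and $\sP$ are the images of the commutative operad in $\set$ under the strong symmetric monoidal functors $S \mapsto \coprod_S \tensorunit^{\M}$ and $S \mapsto \coprod_S \tensorunit^{\N}$, and the lax monoidal unit map $R^0 : \tensorunit^{\M} \to R\tensorunit^{\N}$ assembles, by coherence, into a map of $\fC$-colored operads $f : \sO \to R\sP$ that is $R^0$ in each arity.

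First I would check hypothesis (2) of Theorem \ref{main.theorem}. Entrywise cofibrancy of $\sO$ and $\sP$ is immediate, since every entry of $\sO$ is the cofibrant object $\tensorunit^{\M}$ and every entry of $\sP$ is the cofibrant object $\tensorunit^{\N}$. Note that $\sO$ and $\sP$ are visibly \emph{not} $\Sigma$-cofibrant, so Theorem \ref{main.theorem.Sigma} does not apply and we genuinely need the full Theorem \ref{main.theorem}; this is why ``nice Quillen equivalence'' is assumed here rather than merely ``weak symmetric monoidal Quillen equivalence''. The entrywise adjoint $\fbar : L\sO \to \sP$ is, in each arity, the adjoint $\Rbar^0 : L\tensorunit^{\M} \to \tensorunit^{\N}$ of $R^0$, i.e.\ the map occurring in \eqref{unit.adjoint}. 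Since $\tensorunit^{\M}$ is cofibrant we may take the cofibrant replacement $q$ in Definition \ref{def:weak.symmetric.monoidal}(2) to be $\id_{\tensorunit^{\M}}$, and then condition (2) of the weak symmetric monoidal Quillen equivalence $(L,R)$ says precisely that $\Rbar^0$ is a weak equivalence in $\N$. Hence $\fbar$ is an entrywise weak equivalence in $\N$, completing the verification of hypothesis (2); hypothesis (1) is the standing assumption that $(L,R)$ is a nice Quillen equivalence.

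Then Theorem \ref{main.theorem} applies and produces a Quillen equivalence $\Lbar : \alg(\sO) \adjoint \alg(\sP) : R$ between the semi-model categories of $\sO$-algebras in $\M$ and of $\sP$-algebras in $\N$; under the identifications $\alg(\sO) = \CMonoid(\M)$ and $\alg(\sP) = \CMonoid(\N)$ this is exactly the claimed Quillen equivalence. I do not expect any genuine obstacle beyond these bookkeeping identifications: all the analytic content has already been absorbed into Proposition \ref{loa.to.pla} and Theorem \ref{main.theorem}, and the only places any hypothesis of the corollary is used are the cofibrancy of the tensor units (both to make the commutative operads entrywise cofibrant and to allow the choice $q = \id$ when checking that $\fbar$ is a weak equivalence) and the ``nice'' hypotheses inherited by Theorem \ref{main.theorem} (to guarantee the semi-model structures of Theorem \ref{theorem623} and the weak equivalences $\alpha, \beta$ in the proof of Proposition \ref{loa.to.pla}).
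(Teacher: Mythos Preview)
Your proposal is correct and follows essentially the same approach as the paper: apply Theorem \ref{main.theorem} with $\sO$ and $\sP$ the commutative operads (entrywise the tensor units), observe they are entrywise cofibrant but not $\Sigma$-cofibrant, and verify that $\fbar = \Rbar^0$ is a weak equivalence using cofibrancy of the units. The paper's proof simply says ``reuse the proof of Corollary \ref{cor.schwede.shipley}'' with the commutative operad in place of the associative one and Theorem \ref{main.theorem} in place of Theorem \ref{main.theorem.Sigma}, which is exactly what you have spelled out.
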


\begin{proof}
We simply reuse the proof of Corollary \ref{cor.schwede.shipley} with $\sO$ the commutative monoid operad in $\M$, which has $\sO(n) = \tensorunitm$ for all $n \geq 0$, and $\sP$ the commutative monoid operad in $\N$. As the commutative monoid operad is not $\Sigma$-cofibrant, we need to assume $(L,R)$ is a nice Quillen equivalence, and we need to use Theorem \ref{main.theorem}.
\end{proof}

Using essentially the same proof as in the previous corollaries with the respective operads for commutative $T$-algebras and commutative $LT$-algebras, Theorem \ref{main.theorem} yields the following result.

\begin{corollary}[Commutative Algebras]
\label{cor.com.algebra}
Suppose  $L : \M \adjoint \N : R$ is a nice Quillen equivalence (Def. \ref{def:nice.qeq}) with $L$ lax symmetric monoidal, and $T$ is a commutative monoid that is cofibrant as an object in $\M$.  Then there is an induced Quillen equivalence
\[\nicexy{\calg(T) \ar@<2.5pt>[r]^-{\Lbar} & \calg(LT) \ar@<2.5pt>[l]^-{R}}\]
between the semi-model categories of commutative $T$-algebras in $\M$ and of commutative $LT$-algebras in $\N$.
\end{corollary}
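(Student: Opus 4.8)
The plan is to apply Theorem~\ref{main.theorem} to the $1$-colored operads $\sO$ and $\sP$ for commutative $T$-algebras in $\M$ and commutative $LT$-algebras in $\N$, in exactly the way Corollary~\ref{cor.com.monoids} reuses the proof of Corollary~\ref{cor.schwede.shipley}. Since $L$ is lax symmetric monoidal, $LT$ is a commutative monoid in $\N$, so $\sP$ is defined; since $R$ is lax symmetric monoidal (part of the nice Quillen equivalence hypothesis), $R\sP$ is a $\fC$-colored operad in $\M$ by \cite{jy2} (Theorem 12.11), so the lifted adjunction $\Lbar \dashv R$ of \eqref{lbar.ocomp.diagram} exists. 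Condition (1) of Theorem~\ref{main.theorem} is the assumption that $(L,R)$ is a nice Quillen equivalence, so the only work is to verify condition (2).

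First I would identify the two operads explicitly. Writing $\Com$ for the commutative monoid operad, the operad for commutative $T$-algebras is the enveloping operad $\Com_T$, whose $n$-ary term is $T$ equipped with the trivial $\Sigma_n$-action, with operadic composition assembled from the multiplication of $T$ --- reflecting that the free commutative $T$-algebra on $X$ is $T \otimes \Sym(X) \cong \coprod_n (T \otimes X^{\otimes n})_{\Sigma_n}$. As $T$ is cofibrant in $\M$, the operad $\Com_T$ is entrywise cofibrant. Likewise $\sP = \Com_{LT}$ has $n$-ary term $LT$ with trivial action, and since $L$ is a left Quillen functor and $T$ is cofibrant, $LT$ is cofibrant in $\N$, so $\sP$ is entrywise cofibrant. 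Note that, just as in Corollary~\ref{cor.com.monoids}, the operads $\Com_T$ and $\Com_{LT}$ are \emph{not} $\Sigma$-cofibrant in general, since the trivial $\Sigma_n$-action on $T$ need not be projectively cofibrant; this is precisely why we must invoke Theorem~\ref{main.theorem}, and hence the nice Quillen equivalence hypothesis, rather than the lighter Theorem~\ref{main.theorem.Sigma}.

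Next I would produce the map $f : \sO \to R\sP$. Entrywise $R\sP$ has $n$-ary term $RLT$, which carries a commutative monoid structure from the lax symmetric monoidal structure of $R$, and one checks $R\sP \cong \Com_{RLT}$ as operads in $\M$. The unit $\eta_T : T \to RLT$ of the adjunction is a map of commutative monoids --- compatibility with the two multiplications, which are induced from the lax structures of $L$ and of $R$, is the point to verify --- so by functoriality of the enveloping operad construction it induces an operad map $f : \Com_T \to \Com_{RLT} = R\sP$ that is entrywise $\eta_T$. Its entrywise adjoint $\fbar : L\sO \to \sP$ then has $n$-ary component the adjoint of $\eta_T : T \to RLT$, which is $\epsilon_{LT} \circ L\eta_T = \id_{LT}$ by the triangle identity; in particular $\fbar$ is an entrywise weak equivalence in $\N$. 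Thus condition (2) of Theorem~\ref{main.theorem} holds, and the theorem gives that $\Lbar : \calg(T) \adjoint \calg(LT) : R$ is a Quillen equivalence between the semi-model categories of commutative $T$-algebras in $\M$ and of commutative $LT$-algebras in $\N$ (these semi-model structures existing by Theorem~\ref{theorem623}, since $\M$ and $\N$ satisfy $(\clubsuit)$ and $\sO, \sP$ are entrywise cofibrant).

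I expect the substantive content to be entirely in identifying $\sO$ and $\sP$ correctly and recognising that they are only entrywise cofibrant, so that Theorem~\ref{main.theorem} (not Theorem~\ref{main.theorem.Sigma}) is the right tool; the remaining obstacle is bookkeeping --- checking that $\eta_T$ respects the induced monoid structures so that $f$ is genuinely a map of operads, and tracking the forgetful functors so that $\fbar$ is seen to be the identity. Everything else is a verbatim substitution into the argument already used for Corollaries~\ref{cor.schwede.shipley}, \ref{cor.algebras}, and \ref{cor.com.monoids}.
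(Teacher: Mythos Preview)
Your proposal is correct and follows essentially the same approach as the paper, which simply says to reuse the proofs of Corollaries~\ref{cor.schwede.shipley}, \ref{cor.algebras}, and~\ref{cor.com.monoids} with the operads for commutative $T$-algebras and commutative $LT$-algebras, applying Theorem~\ref{main.theorem}. Your write-up is in fact more detailed than the paper's one-line proof: you explicitly identify $\Com_T(n) = T$ with trivial $\Sigma_n$-action, note this is only entrywise cofibrant (forcing Theorem~\ref{main.theorem} rather than Theorem~\ref{main.theorem.Sigma}), and compute via the triangle identity that $\fbar$ is the identity on $LT$ --- all of which the paper leaves implicit.
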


This result improves on Theorem 4.19 in \cite{white-commutative}, which required $L$ to be strong symmetric monoidal.

Using instead the operads for $1$-colored non-symmetric operads in $\M$ and in $\N$, essentially the same proof yields the following special case of Theorem \ref{main.theorem.Sigma}.  A similar result for full model categories is  \cite{muro14} (1.1).  

\begin{corollary}[Non-Symmetric Operads]
\label{cor.muro}
Suppose  $L : \M \adjoint \N : R$ is a weak monoidal Quillen equivalence, that the tensor units in $\M$ and $\N$ are cofibrant, and that the generating cofibrations in $\M$ have cofibrant domains.  Then there is an induced Quillen equivalence
\[\nicexy{\omegaoperad(\M) \ar@<2.5pt>[r]^-{\Lbar} & \omegaoperad(\N) \ar@<2.5pt>[l]^-{R}}\]
between the semi-model categories of $1$-colored non-symmetric operads in $\M$ and in $\N$.
\end{corollary}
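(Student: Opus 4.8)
The plan is to realize $1$-colored non-symmetric operads in $\M$ and in $\N$ as algebras over $\Z_{\geq 0}$-colored operads and then quote Theorem~\ref{main.theorem.Sigma}, exactly along the lines of the proofs of Corollaries~\ref{cor.schwede.shipley} and~\ref{cor.algebras}.  Concretely, I would let $\sO$ be the $\Z_{\geq 0}$-colored operad encoding the free non-symmetric operad monad on $\M^{\Z_{\geq 0}} = \prod_{n \geq 0}\M$: for $d \in \Z_{\geq 0}$ and a profile $\uc = (c_1,\dots,c_m)$ of non-negative integers, its entry is $\sO\duc = \coprodover{\calt\duc}\tensorunitm$, where $\calt\duc$ is the set of (isomorphism classes of) planar rooted trees with $d$ ordered leaves whose $m$ vertices are identified with $1,\dots,m$, the $i$-th vertex having arity $c_i$, and operadic composition on $\sO$ is grafting of trees.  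Equivalently, $\sO$ is the image of a $\Z_{\geq 0}$-colored operad $\sO_{\set}$ in $\set$ (with $\sO_{\set}\duc = \calt\duc$) under the strong symmetric monoidal left adjoint $\set \to \M$, $S \mapsto \coprodover{S}\tensorunitm$, and the operad $\sP$ for $1$-colored non-symmetric operads in $\N$ is the image of the \emph{same} set-level operad $\sO_{\set}$ under $\set \to \N$, $S \mapsto \coprodover{S}\tensorunitn$.  As in Corollary~\ref{cor.schwede.shipley}, I expect a weak \emph{monoidal} (rather than symmetric monoidal) Quillen equivalence to suffice here, because the symmetric group actions on the entries of $\sO$ and $\sP$ turn out to be free.

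With this setup, the only thing left is to verify hypothesis~(2) of Theorem~\ref{main.theorem.Sigma}; hypothesis~(1) is part of our assumption, hypothesis~(3) is the assumption on generating cofibrations, and the cofibrancy of the units is what makes the verification of~(2) go through (so that none of $(\medstar)$, $(\filledstar)$, $(\clubsuit)$ is needed).  First I would check that $\sO$ and $\sP$ are $\Sigma$-cofibrant.  The point is that the permutation group of the profile $\uc$ acts on $\calt\duc$ by relabeling vertices of equal arity, and this action is \emph{free}: a planar rooted tree with its $d$ leaves linearly ordered has no nontrivial automorphism (an automorphism fixes the leaves, hence is the identity), so two distinct identifications of its vertex set with $\{1,\dots,m\}$ give non-isomorphic labeled trees.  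Hence each $\sO\duc = \coprodover{\calt\duc}\tensorunitm$ is a coproduct of free $\sigmabrc$-cells on the cofibrant object $\tensorunitm$, so it is projectively cofibrant in $\M^{\sigmabrcop}$; thus $\sO$ is $\Sigma$-cofibrant, and the same argument gives $\sP$ $\Sigma$-cofibrant.  Next I would produce the map $f : \sO \to R\sP$: entrywise it is induced on each coproduct summand by the unit $\tensorunitm \xrightarrow{R^0} R\tensorunitn$, and it is a map of colored operads because it is assembled functorially from the lax monoidal structure maps of $R$ while grafting of trees only involves $\otimes$ and the reindexing of coproducts (this is verbatim the argument in Corollary~\ref{cor.schwede.shipley}).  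Since $L$ commutes with the coproducts, the entrywise adjoint $\fbar : L\sO \to \sP$ is, in each entry, the coproduct $\coprodover{\calt\duc}\Rbar^0$ of copies of $\Rbar^0 : L\tensorunitm \to \tensorunitn$, the adjoint of $R^0$.

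Finally I would argue that $\fbar$ is an entrywise weak equivalence.  Since $\tensorunitm$ is cofibrant, a cofibrant replacement $q : Q\tensorunitm \to \tensorunitm$ is a weak equivalence between cofibrant objects, so $Lq$ is a weak equivalence by Ken Brown's Lemma, whence $\Rbar^0$ is a weak equivalence by Definition~\ref{def:weak.symmetric.monoidal}(2) and $2$-out-of-$3$; its source $L\tensorunitm$ is cofibrant in $\N$ and its target $\tensorunitn$ is cofibrant by hypothesis, so each coproduct $\coprodover{\calt\duc}\Rbar^0$ is a weak equivalence between cofibrant objects (a coproduct of weak equivalences between cofibrant objects is a weak equivalence, being the image of a left Quillen functor out of the relevant discrete diagram category).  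Then hypothesis~(2) of Theorem~\ref{main.theorem.Sigma} is satisfied and that theorem delivers the asserted Quillen equivalence.  I expect the only genuinely new ingredient to be the combinatorial description of $\sO$ together with the freeness of the relabeling actions on its entries (equivalently, the $\Sigma$-cofibrancy of $\sO$ and $\sP$); once that is in hand, the remainder is a line-by-line transcription of the proofs of Corollaries~\ref{cor.schwede.shipley} and~\ref{cor.algebras}.
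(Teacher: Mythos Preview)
Your proposal is correct and follows essentially the same approach as the paper: apply Theorem~\ref{main.theorem.Sigma} with $\sO$ and $\sP$ the $\Z_{\geq 0}$-colored operads whose algebras are $1$-colored non-symmetric operads, verify $\Sigma$-cofibrancy and that $\fbar$ is an entrywise weak equivalence exactly as in Corollary~\ref{cor.schwede.shipley}. The paper gives no separate proof beyond ``essentially the same proof,'' so your explicit description of the entries as coproducts of tensor units indexed by planar rooted trees, together with the freeness of the vertex-relabeling action to establish $\Sigma$-cofibrancy, supplies the details the paper leaves implicit.
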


\subsection{Generalized Props}

The previous corollaries can be vastly extended to \emph{generalized props} associated to any pasting scheme in the sense of  \cite{jy2} (10.39).  We refer the reader to \cite{jy2} for detailed discussion of pasting schemes and their associated generalized props.  For any fixed set $\fC$ of colors, generalized props over a pasting scheme include: enriched $\fC$-categories ($=$ enriched categories with object set $\fC$ and object-preserving functors), $\fC$-colored operads, $\fC$-colored half-props, $\fC$-colored dioperads, $\fC$-colored prop(erad)s, $\fC$-colored wheeled operads, and $\fC$-colored wheeled prop(erad)s.  See \cite{jy2} (Chapter 11) for detailed discussion of these objects.  

When Theorem \ref{main.theorem} is restricted to the special case with $\sO$ and $\sP$ the operads for the generalized props under discussion in $\M$ and in $\N$, which are explicitly described in \cite{jy2} (14.1),  we obtain the following result.  The proof is once again basically the same as that of Corollary \ref{cor.schwede.shipley}, but uses Theorem \ref{main.theorem} because the colored operads in question are in general not $\Sigma$-cofibrant.

\begin{corollary}
\label{cor.gprop}
Suppose $L : \M \adjoint \N : R$ is a nice Quillen equivalence (Def. \ref{def:nice.qeq}) and that the tensor units in $\M$ and $\N$ are cofibrant.  Then for each pasting scheme $\fG$ in the sense of \cite{jy2} (Def. 8.2), there is an induced Quillen equivalence
\[\nicexy{\gprop(\M) \ar@<2.5pt>[r]^-{\Lbar} & \gprop(\N) \ar@<2.5pt>[l]^-{R}}\]
between the semi-model categories of $\fG$-props \cite{jy2} (10.39) in $\M$ and in $\N$.  In particular, for each color set $\fC$, there are induced Quillen equivalences between semi-model categories:
\begin{tabular}{rl}
enriched $\fC$-categories & $\catc(\M) \adjoint \catc(\N)$\\
$\fC$-colored operads & $\operadc(\M) \adjoint \operadc(\N)$\\
$\fC$-colored half-props & $\halfpropc(\M) \adjoint \halfpropc(\N)$\\
$\fC$-colored dioperads & $\dioperadc(\M) \adjoint \dioperadc(\N)$\\
$\fC$-colored properads & $\properadc(\M) \adjoint \properadc(\N)$\\
$\fC$-colored props & $\propc(\M) \adjoint \propc(\N)$\\
$\fC$-colored wheeled operads & $\woperadc(\M) \adjoint \woperadc(\N)$\\
$\fC$-colored wheeled properads & $\wproperadc(\M) \adjoint \wproperadc(\N)$\\
$\fC$-colored wheeled props & $\wpropc(\M) \adjoint \wpropc(\N)$
\end{tabular}
\end{corollary}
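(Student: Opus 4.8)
The plan is to realize $\fG$-props as algebras over colored operads and then invoke Theorem \ref{main.theorem}, following the template already used for Corollaries \ref{cor.schwede.shipley}, \ref{cor.com.monoids}, and \ref{cor.com.algebra}. First I would recall from \cite{jy2} (Chapter 14) that, for a fixed color set $\fC$ and pasting scheme $\fG$, there is a colored operad $\sO$, with a set of colors determined by $\fC$ and $\fG$, whose algebras are exactly the $\fG$-props in $\M$, and likewise an operad $\sP$ whose algebras are the $\fG$-props in $\N$. The essential structural point, made explicit in \cite{jy2} (14.1), is that both operads are the images of one set-valued colored operad $\sO_{\set}$ under the strong symmetric monoidal functors $F_{\M} : \set \to \M$, $S \mapsto \coprod_{S} \tensorunitm$ and $F_{\N} : \set \to \N$, $S \mapsto \coprod_{S} \tensorunitn$ --- exactly as the associative, commutative, and enveloping operads arise in the earlier corollaries --- so that every entry of $\sO$ (resp. $\sP$) is a coproduct of copies of $\tensorunitm$ (resp. $\tensorunitn$).

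Next I would verify hypothesis (2) of Theorem \ref{main.theorem}. Since $\tensorunitm$ is assumed cofibrant, any coproduct of copies of it is cofibrant (coproducts of cofibrant objects being cofibrant, and the empty coproduct being $\varnothing$), so $\sO$ is entrywise cofibrant in $\M$ and, symmetrically, $\sP$ is entrywise cofibrant in $\N$. The map $f : \sO \to R\sP$ of $\fC$-colored operads in $\M$ is the one induced by the lax unit map $R^0 : \tensorunitm \to R\tensorunitn$, exactly as in Corollary \ref{cor.schwede.shipley}, and its entrywise adjoint $\fbar : L\sO \to \sP$ is, on each entry, a coproduct of copies of the map $\Rbar^0 : L\tensorunitm \to \tensorunitn$ of \eqref{unit.adjoint}. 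Because both tensor units are cofibrant and $(L,R)$ is, in particular, a weak symmetric monoidal Quillen equivalence, $\Rbar^0$ is a weak equivalence between cofibrant objects in $\N$ --- one may take $\id : \tensorunitm \to \tensorunitm$ as the cofibrant replacement in Definition \ref{def:weak.symmetric.monoidal}(2). A coproduct of weak equivalences between cofibrant objects is again a weak equivalence, by Ken Brown's Lemma applied to the coproduct functor, which is left Quillen; hence $\fbar$ is an entrywise weak equivalence in $\N$ and hypothesis (2) holds. Hypothesis (1) is the standing assumption, so Theorem \ref{main.theorem} yields the Quillen equivalence $\gprop(\M) = \alg(\sO) \adjoint \alg(\sP) = \gprop(\N)$ between semi-model categories. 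The displayed table of special cases then follows: for each fixed color set $\fC$ and each structure named there (enriched $\fC$-categories, $\fC$-colored operads, half-props, dioperads, prop(erad)s, and the wheeled variants), \cite{jy2} (Chapter 11) exhibits a pasting scheme $\fG$ for which the $\fG$-props are precisely that structure, so the statement just proved specializes to each row.

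I expect the only real subtleties to be bookkeeping ones. The deep input --- identifying $\fG$-props with algebras over $\sO$ and describing the entries of $\sO$ as coproducts of $\tensorunitm$ --- is entirely the work of \cite{jy2}, not of this proof. On our side, the point that actually carries content is that these operads have nontrivial symmetric-group actions on those coproducts of the tensor unit and so are almost never $\Sigma$-cofibrant; this is precisely why one must use the entrywise-cofibrant Theorem \ref{main.theorem}, and hence the full strength of a \emph{nice} Quillen equivalence, rather than the $\Sigma$-cofibrant Theorem \ref{main.theorem.Sigma}. A secondary point worth a sentence of care is checking that the natural transformation $F_{\M} \Rightarrow R F_{\N}$ induced by $R^0$ is monoidal, so that it genuinely transports $\sO_{\set}$ to an operad map $f$; this is a routine diagram chase, identical to the one implicit in the proof of Corollary \ref{cor.schwede.shipley}.
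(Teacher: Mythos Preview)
Your proposal is correct and follows essentially the same approach as the paper: apply Theorem \ref{main.theorem} to the colored operads for $\fG$-props described in \cite{jy2} (14.1), verifying hypothesis (2) exactly as in Corollary \ref{cor.schwede.shipley} via the fact that each entry is a coproduct of copies of the tensor unit, and noting that Theorem \ref{main.theorem} (rather than \ref{main.theorem.Sigma}) is required because these operads are generally not $\Sigma$-cofibrant. Your write-up simply spells out the details that the paper leaves implicit by pointing back to the proof of Corollary \ref{cor.schwede.shipley}.
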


This result extends a result from \cite{hry15} to non-shrinkable contexts. In particular, the application to properads, colored props, and colored wheeled props, is new.

\subsection{Cyclic and Modular Operads}

Similarly, using the operads for $\fC$-colored cyclic operads or $\fC$-colored modular operads for a fixed color set $\fC$ in $\M$ and in $\N$ \cite{gk95,gk98,mss}, Theorem \ref{main.theorem} yields the following result.

\begin{corollary}
\label{cor.cyclic}
Suppose  $L : \M \adjoint \N : R$ is a nice Quillen equivalence (Def. \ref{def:nice.qeq}) and that the tensor units in $\M$ and $\N$ are cofibrant.  Then for each color set $\fC$, there are induced Quillen equivalences
\[\nicexy{\cyoperad(\M) \ar@<2.5pt>[r]^-{\Lbar} & \cyoperad(\N) \ar@<2.5pt>[l]^-{R}}
\quad
\nicexy{\modoperad(\M) \ar@<2.5pt>[r]^-{\Lbar} & \modoperad(\N) \ar@<2.5pt>[l]^-{R}}\]
between the semi-model categories of $\fC$-colored cyclic (resp., modular) operads in $\M$ and in $\N$.
\end{corollary}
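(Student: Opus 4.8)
The plan is to deduce Corollary~\ref{cor.cyclic} from Theorem~\ref{main.theorem} by the same mechanism used for Corollaries~\ref{cor.schwede.shipley}--\ref{cor.gprop}: one identifies the $\fC$-colored operad whose algebras are the structures in question, and then checks hypothesis~(2) of the theorem. First I would recall, from \cite{gk95,gk98,mss} (and for the colored generalization from \cite{jy2}), the $\fC$-colored operad $\sO$ in $\M$ whose category of algebras is the category of $\fC$-colored cyclic operads in $\M$, together with the analogous colored operad for $\fC$-colored modular operads; running the same construction in $\N$ produces the corresponding colored operads $\sP$. The structural point that makes the argument work --- exactly as in the description of the colored operad for ordinary $\fC$-colored operads in \cite{jy2}~(14.1) and for generalized props in Corollary~\ref{cor.gprop} --- is that each of these colored operads is the image of a single set-valued colored operad under the strong symmetric monoidal functor $\set \to \M$, $S \mapsto \coprodover{S}\tensorunitm$ (resp.\ $\set \to \N$, $S \mapsto \coprodover{S}\tensorunitn$), so every entry of $\sO$ and of $\sP$ is a coproduct of copies of $\tensorunitm$ (resp.\ $\tensorunitn$).

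Granting this, the verification proceeds in the same steps as in Corollary~\ref{cor.schwede.shipley}. Since $\tensorunitm$ and $\tensorunitn$ are cofibrant and a coproduct of cofibrant objects is cofibrant, $\sO$ is entrywise cofibrant in $\M$ and $\sP$ is entrywise cofibrant in $\N$. The map $f : \sO \to R\sP$ is the evident one induced by the unit structure map $R^0 : \tensorunitm \to R\tensorunitn$, and its entrywise adjoint $\fbar : L\sO \to \sP$ is, on each entry, a coproduct of copies of the map $\Rbar^0 : L\tensorunitm \to \tensorunitn$. Because the tensor units are cofibrant, $\Rbar^0$ is a weak equivalence between cofibrant objects in $\N$ (Def.~\ref{def:weak.symmetric.monoidal}(2), taking $Q\tensorunitm = \tensorunitm$); and since the coproduct functor $\coprodover{S} : \prod_{S} \N \to \N$ is left Quillen --- its right adjoint, the diagonal, preserves fibrations and trivial fibrations because these are defined entrywise in the product model structure --- Ken Brown's Lemma shows that $\fbar$ is an entrywise weak equivalence in $\N$. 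Hypothesis~(2) of Theorem~\ref{main.theorem} is therefore satisfied, and the theorem supplies the two asserted Quillen equivalences. Note that one genuinely needs Theorem~\ref{main.theorem} (and the full strength of a nice Quillen equivalence) here rather than Theorem~\ref{main.theorem.Sigma}, since the colored operads governing cyclic and modular operads are not $\Sigma$-cofibrant, just as for the commutative monoid operad.

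The only genuinely non-formal ingredient --- and the step I expect to be the main obstacle --- is the first one: presenting $\fC$-colored cyclic operads and $\fC$-colored modular operads as algebras over colored operads of this particular set-valued shape. For cyclic operads this requires organizing the combinatorics of the relevant rooted trees together with the cyclic symmetry relating inputs and outputs (and the attendant conventions on the color set); for modular operads it additionally requires keeping track of the genus grading and the stability condition. Once the defining monad is identified in the form $\sO \circ (-)$ with $\sO$ the image of a set-valued colored operad, everything downstream is a formal consequence of Theorem~\ref{main.theorem} together with Ken Brown's Lemma, exactly as in the preceding corollaries; and where the literature (for instance \cite{jy2}, or the pasting-scheme formalism underlying Corollary~\ref{cor.gprop}) already records the relevant colored operad, there is nothing further to check.
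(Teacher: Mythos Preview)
Your proposal is correct and follows exactly the approach the paper takes: the paper does not give an explicit proof of Corollary~\ref{cor.cyclic} at all, but merely says ``Similarly, using the operads for $\fC$-colored cyclic operads or $\fC$-colored modular operads \ldots\ Theorem~\ref{main.theorem} yields the following result,'' pointing back to the mechanism of Corollary~\ref{cor.schwede.shipley} and Corollary~\ref{cor.gprop}. You have in fact supplied more detail than the paper --- spelling out why each entry is a coproduct of tensor units, why $\fbar$ is an entrywise weak equivalence via Ken Brown, and why Theorem~\ref{main.theorem} rather than Theorem~\ref{main.theorem.Sigma} is needed --- all of which is exactly what the paper intends by ``similarly.''
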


\subsection{Lie algebras}

Quillen achieved a Quillen equivalence between the categories of reduced rational simplicial Lie algebras and reduced rational dg Lie algebras \cite{quillen} (p.211). We now recover this result as a special case of Theorem \ref{main.theorem}, while simultaneously generalizing Quillen's work to the setting of any field of characteristic zero.

\begin{corollary} \label{cor:Lie}
For any field $k$ of characteristic zero, the Dold-Kan equivalence $L \dashv R$ between $\calm = $ reduced simplicial $k$-modules, and $\N = $ reduced dg $k$-modules, induces a Quillen equivalence between reduced simplicial Lie $k$-algebras, and reduced dg Lie $k$-algebras.
\end{corollary}

\begin{proof}
There is an operad $\Lie_k$, in the category of $k$-vector spaces, whose algebras are Lie algebras. This operad acts in both $\calm$ and $\N$, because both are enriched in $k$-vector spaces. The categories of algebras over this operad possess transferred model structures by \cite{white-yau} (6.1.1). We now apply Theorem \ref{main.theorem}, taking $\sO$ and $\sP$ to be the operad $\Lie_k$. The map $f: \sO \to R(\sP)$ is induced by the unit of the adjunction, and $\overline{f}: L \sO \to \sP$ is an isomorphism, just as in \cite{ss03} (3.16, 4.4). That the Dold-Kan equivalence is nice is Example \ref{ex:nice}.
\end{proof}

We remark that the paper \cite{hry15} states (after Definition 5.1) that the Quillen adjunction between reduced rational simplicial Lie algebras and reduced rational dg Lie algebras is a weak monoidal Quillen pair. This is a small misprint. What the authors meant was that this Quillen equivalence could be obtained from the Dold-Kan equivalence, which is a weak monoidal Quillen equivalence. Corollary \ref{cor:Lie} verifies this claim.

\section{Applications to Left Bousfield Localization} \label{sec:bous-loc}

Left Bousfield localization is a general framework that starts with a (nice) model category $\calm$ and a set of morphisms $\C$, and produces a new model structure $L_\C(\calm)$ on the same category in which maps in $\C$ are now weak equivalences (along with all the old weak equivalences). When we say Bousfield localization we will always mean \emph{left} Bousfield localization, so cofibrations in $L_\C(\calm)$ will be the same as the cofibrations in $\calm$. The model category $\lcm$ satisfies a universal property (Theorem 3.3.20, \cite{hirschhorn}): for any left Quillen functor $F:\M \to \N$ taking the maps in $\C$ to weak equivalences, there is an induced left Quillen functor $\tilde{F}:\lcm \to \N$. 

Applications of left Bousfield localization abound: it has been used to study generalized homology theories, to create stable model structures for spectra (including equivariant and motivic spectra), for spectral sequence computations, and to give models for presentable $\infty$-categories, just to name a few. We refer the interested reader to \cite{hirschhorn} to learn more. Recently, it has become advantageous to study the interplay between Bousfield localization and operad algebra structure. A lengthy list of applications in this vein can be found in \cite{white-localization} and \cite{white-yau}.

In this section we will specialize the machinery of Theorems \ref{main.theorem} and \ref{main.theorem.Sigma} to the local setting, and prove results relating $\algolcm$ and $\algpldn$, the categories of algebras valued in local model categories. We begin with an adjunction $L:\M \adjoint \N:R$ and a class of maps $\C$ in $\M$. We define a class of maps $\D = \underline{L} \C$ in $\N$, where $\underline{L}$ is the left derived functor of the left adjoint $L$.

\subsection{Local Quillen Equivalences}

In order for operad algebras to have a well-behaved local homotopy theory, we will need $\lcm$ and $\ldn$ to be monoidal model categories. Such localizations are studied in \cite{white-thesis}, where they are called \textit{monoidal left Bousfield localizations}. In particular, the following characterization is given. We say that \textit{cofibrant objects are flat} when, for every cofibrant $X$, $X\otimes -$ preserves weak equivalences.

\begin{theorem}[Monoidal Bousfield Localization = 4.6 in \cite{white-localization}] \label{thm:PPAxiom-nontractable}
Suppose $\M$ is a cofibrantly generated monoidal model category in which cofibrant objects are flat. Then the following are equivalent:
\begin{enumerate}
\item $L_\C(\M)$ satisfies the pushout product axiom and has cofibrant objects flat.
\item Every map of the form $f \otimes \Id_K$, where $f$ is in $\C$ and $K$ is cofibrant, is a $\C$-local equivalence. 
\end{enumerate}
If the domains of the generating cofibrations are cofibrant, then it suffices to check this condition for (co)domains $K$ of the generating cofibrations.
\end{theorem}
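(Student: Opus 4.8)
The plan is to treat the two implications separately; $(2)\Rightarrow(1)$ carries essentially all the weight. The implication $(1)\Rightarrow(2)$ is immediate: each $f\in\C$ is a $\C$-local equivalence by construction of $\lcm$, and if cofibrant objects are flat in $\lcm$ then for cofibrant $K$ the functor $K\otimes-$ preserves $\C$-local equivalences, so $f\otimes\Id_K\cong K\otimes f$ is a $\C$-local equivalence. For $(2)\Rightarrow(1)$ I would first prove the key reformulation: \emph{condition} (2) \emph{is equivalent to the assertion that the internal hom $\Hom(K,W)$ is a $\C$-local object whenever $K$ is cofibrant and $W$ is $\C$-local.}

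To establish the reformulation, observe that $\Hom(K,W)$ is fibrant in $\M$ by the pushout product axiom in $\M$. Given a cofibrant approximation $\widetilde{f}\colon\widetilde{C}_1\longrightarrow\widetilde{C}_2$ of a map $f\in\C$, the hypothesis that cofibrant objects are flat in $\M$ makes $\widetilde{f}\otimes\Id_K$ a cofibrant approximation of $f\otimes\Id_K$ (its components $(\widetilde{C}_i\longrightarrow C_i)\otimes\Id_K$ are weak equivalences because $-\otimes K$ preserves them, $K$ being cofibrant), so (2) is equivalent to asking that $\widetilde{f}\otimes\Id_K$ be a $\C$-local equivalence for every $f$ and every cofibrant $K$; transposing through the derived tensor--hom adjunction $\map(\widetilde{C}_i\otimes K,W)\simeq\map(\widetilde{C}_i,\Hom(K,W))$ (valid since $\widetilde{C}_i,K$ are cofibrant and $W$ is fibrant) converts this into the statement that $\map(\widetilde{C}_2,\Hom(K,W))\longrightarrow\map(\widetilde{C}_1,\Hom(K,W))$ is a weak equivalence for all such $f$, which is exactly $\C$-locality of $\Hom(K,W)$.

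Granting the reformulation, I would derive the two halves of (1). For flatness of cofibrant objects in $\lcm$: given cofibrant $X$ and a $\C$-local equivalence $h\colon A\longrightarrow B$, reduce to $A,B$ cofibrant via $\M$-cofibrant replacements (legitimate since $X\otimes-$ preserves $\M$-weak equivalences), and then $X\otimes h$ is a $\C$-local equivalence iff $\map(B,\Hom(X,W))\longrightarrow\map(A,\Hom(X,W))$ is a weak equivalence for every $\C$-local $W$, which holds because $\Hom(X,W)$ is $\C$-local by the reformulation. For the pushout product axiom in $\lcm$: the cofibration clause is inherited from $\M$ (the cofibrations coincide), so the content is that $f\boxprod g$ is a $\C$-local equivalence when $f$ is a cofibration and $\C$-local equivalence and $g$ is a cofibration. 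By the standard reduction it suffices to verify this on generating maps, and Hirschhorn's generating $\C$-local trivial cofibrations are either generating trivial cofibrations of $\M$ (for which $f\boxprod g$ is already an $\M$-trivial cofibration by the pushout product axiom in $\M$) or horns built from a cofibrant approximation of a map in $\C$, hence cofibrations between cofibrant objects; so it remains to treat $f\colon A\longrightarrow B$ a cofibration and $\C$-local equivalence with $A,B$ cofibrant and $g$ any cofibration. Testing against an $\M$-fibration $p\colon E\longrightarrow Z$ between $\C$-local objects --- a cofibration is a $\C$-local equivalence precisely when it left-lifts against all such $p$ --- a lifting problem of $f\boxprod g$ against $p$ transposes to one of $g$ against the pullback-corner map $\Hom^{\boxprod}(f,p)$, and I would check $\Hom^{\boxprod}(f,p)$ is an $\M$-trivial fibration: it is an $\M$-fibration by the pushout product axiom in $\M$, and an $\M$-weak equivalence because every internal hom object in its defining pullback square is $\C$-local by the reformulation, the two maps induced by the cofibrant $\C$-local equivalence $f$ are $\M$-weak equivalences (via the reformulation together with the flatness just proved, which make $\Hom(f,E)$ and $\Hom(f,Z)$ weak equivalences), and a $2$-out-of-$3$ argument in that square finishes it; then $g$ lifts, hence so does $f\boxprod g$.

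For the refinement: when the domains of the generating cofibrations are cofibrant so are their codomains, and I would show the class of objects $K$ for which $\Hom(K,W)$ is $\C$-local for every $\C$-local $W$ contains all cofibrant objects once it contains the (co)domains of the generating cofibrations --- it contains $\varnothing$ since $\Hom(\varnothing,W)=*$, and it is closed under retracts and under the pushouts and transfinite compositions building generating cell complexes, because $\Hom(-,W)$ converts these into homotopy pullbacks and homotopy limits of fibrations between fibrant objects (this is exactly where cofibrancy of the domains of the generating cofibrations is used) while $\C$-local objects are closed under homotopy limits and retracts --- so by the proof of the reformulation it suffices to check (2) on (co)domains of generating cofibrations. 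I expect the principal obstacle to be the model-categorical bookkeeping around objects that need not be cofibrant: making precise the derived tensor--hom adjunction and its compatibility with homotopy function complexes for a not-necessarily-simplicial monoidal model category (via framings), and confirming that each passage to a cofibrant approximation is licensed by flatness of cofibrant objects in $\M$. The conceptual core --- that (2) says exactly that the internal hom out of a cofibrant object into a $\C$-local object is again $\C$-local, and that this single fact powers both flatness and the pushout product axiom in $\lcm$ --- is short; the friction is in pinning down which facts of Hirschhorn's theory to invoke (notably that a map between $\C$-local objects is a $\C$-local fibration iff an $\M$-fibration, and the characterization of $\C$-local equivalences among cofibrations by lifting against such fibrations).
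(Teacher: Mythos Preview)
The paper does not prove this theorem; it is quoted from \cite{white-thesis} and stated without proof. Your proposal is therefore being compared against the argument in that external reference rather than against anything in the present paper.

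Your strategy---reformulating (2) as ``$\Hom(K,W)$ is $\C$-local for cofibrant $K$ and $\C$-local $W$'' and deriving both flatness and the pushout product axiom from that single fact---is exactly the standard approach taken in the cited source, and your handling of $(1)\Rightarrow(2)$, the reformulation, flatness, and the refinement are all correct. The one place that deserves more care is the lifting characterization you invoke for the pushout product axiom: the assertion that a cofibration is a $\C$-local equivalence \emph{precisely} when it lifts against all $\M$-fibrations between $\C$-local objects is true, but it is not immediate from the model category axioms (lifting against a proper subclass of the $\lcm$-fibrations is a priori weaker than being an $\lcm$-trivial cofibration). It goes through because, via cosimplicial framings on $\C$-local $W$, the condition that $\map(f\boxprod g,W)$ be a trivial fibration of simplicial sets unwinds to lifting $f\boxprod g$ against maps of the form $W^{\Delta[n]}\to W^{\partial\Delta[n]}$, which are $\M$-fibrations between $\C$-local objects; you correctly flag this framing bookkeeping as the main friction point, and it is. With that gap filled your argument is complete and matches the cited proof.
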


Throughout this section, we will assume that $\M$, $\N$, $\lcm$, and $\ldn$ are cofibrantly generated monoidal model categories. We state our main result:

\begin{theorem}[Lifting Local Quillen Equivalences]
\label{main.theorem.local}
Suppose:
\begin{enumerate}
\item $L : \M \adjoint \N : R$ is a Quillen equivalence where $R$ is lax symmetric monoidal. Suppose
\begin{enumerate}
\item For all $X,Y$ cofibrant in $\M$, the comonoidal map $L(X\otimes Y)\to LX \otimes LY$ is a local weak equivalence in $\N$.
\item For some cofibrant replacement $Q \tensorunitm$ of the unit $\tensorunitm$, the composition \\
$LQ\tensorunitm \to L \tensorunitm \to \tensorunitn$ is a local weak equivalence in $\N$.
\end{enumerate} 
Recall that $\D = \underline{L}\C$.
\item $(\filledstar)$ holds in $\M$ and $\N$. The generating cofibrations in $\M$ have cofibrant domain.
\item $(\#)$ and $(\clubsuit)$ hold in $\lcm$ and $\ldn$.
\item $\N$ satisfies the local version of $(\medstar)$; i.e. for any local weak equivalence $g$ between objects of $\N^{\sigmaop_n}$ that are cofibrant in $\N$ (same as being cofibrant in $\ldn$), and for any $X$ in $\N^{\Sigma_n}$ that is cofibrant in $\N$, then $g\otimes_{\Sigma_n} X$ is a local weak equivalence.
\item $f : \sO \to R\sP$ is a map of $\fC$-colored operads in $\M$, $\sO$ an entrywise cofibrant $\fC$-colored operad in $\M$, $\sP$ an entrywise cofibrant $\fC$-colored operad in $\N$, and the entrywise adjoint $\fbar : L\sO \to \sP$ is an entrywise local weak equivalence in $\N$. 
\end{enumerate}
Then the lifted adjunction \eqref{lbar.ocomp.diagram}
\[\nicexy{\algolcm \ar@<2.5pt>[r]^-{\Lbar} & \algpldn \ar@<2.5pt>[l]^-{R}}\]
is a Quillen equivalence between the semi-model categories of $\sO$-algebras in $\lcm$ and of $\sP$-algebras in $\ldn$ (Theorem \ref{theorem623}).
\end{theorem}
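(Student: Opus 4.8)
The plan is to deduce Theorem \ref{main.theorem.local} directly from the non-local Main Theorem \ref{main.theorem}, applied to the localized adjunction $L : \lcm \adjoint \ldn : R$ (with the same map $f$) in place of $L : \M \adjoint \N : R$. Because left Bousfield localization changes neither the underlying categories and functors $L$, $R$ nor the map $f$, the lifted adjunction $(\Lbar, R)$ of \eqref{lbar.ocomp.diagram} constructed in Def.\ \ref{lifted.right.adjoint} is literally the same adjunction in both settings, so the conclusion of Theorem \ref{main.theorem} in the localized setting is exactly the asserted Quillen equivalence $\algolcm \adjoint \algpldn$; the two semi-model structures exist by Theorem \ref{theorem623}, applied to $\lcm$ and $\ldn$, which by our standing assumption are cofibrantly generated monoidal model categories and which satisfy $(\clubsuit)$ by hypothesis (3). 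The entire proof thus reduces to checking that $L : \lcm \adjoint \ldn : R$ is a \emph{nice} Quillen equivalence (Def.\ \ref{def:nice.qeq}) and that $f$ satisfies hypothesis (2) of Theorem \ref{main.theorem} relative to the pair $(\lcm, \ldn)$.

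First I would show that $L : \lcm \adjoint \ldn : R$ is a weak symmetric monoidal Quillen equivalence. It is a Quillen adjunction: the composite $\M \xrightarrow{L} \N \to \ldn$ is a left Quillen functor carrying $\C$ to weak equivalences in $\ldn$ --- which is precisely the content of the choice $\D = \underline{L}\C$ --- so by the universal property of left Bousfield localization (Hirschhorn, Theorem 3.3.20, recalled in Section \ref{sec:bous-loc}) it factors through a left Quillen functor $\lcm \to \ldn$, and this functor is again $L$ since localization leaves the cofibrations unchanged. It is a Quillen equivalence because localizing both sides of the Quillen equivalence $L : \M \adjoint \N : R$ at the corresponding classes $\C$ and $\underline{L}\C$ again yields a Quillen equivalence: on homotopy categories this is the assertion that the equivalence $\underline{L} : \Ho(\M) \simeq \Ho(\N)$ matches up $\C$-local objects with $\underline{L}\C$-local objects, which is immediate from the fact that a Quillen equivalence induces weak equivalences of homotopy function complexes together with the definition $\D = \underline{L}\C$; this descent statement is recorded in \cite{white-thesis}. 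Conditions (1)(a) and (1)(b) of Theorem \ref{main.theorem.local} are verbatim the two conditions of Def.\ \ref{def:weak.symmetric.monoidal} for $L : \lcm \adjoint \ldn : R$, since a local weak equivalence in $\N$ is by definition a weak equivalence in $\ldn$.

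Next I would verify the three conditions of Def.\ \ref{def:nice.qeq} for $\lcm$ and $\ldn$, along with hypothesis (2) of Theorem \ref{main.theorem}. Since localization does not change cofibrations, an object is cofibrant in $\M$ --- or in a projective diagram category over $\M$, whose generating cofibrations are built from those of $\M$ --- if and only if it is cofibrant for the correspondingly localized structure, and likewise for $\N$ and $\ldn$; hence $(\filledstar)$ in $\M$ and $\N$ (hypothesis (2)) gives $(\filledstar)$ in $\lcm$ and $\ldn$, the generating cofibrations of $\lcm$ may be taken to be those of $\M$ and so have cofibrant domains (hypothesis (2)), $(\#)$ and $(\clubsuit)$ in $\lcm$ and $\ldn$ are hypothesis (3), and $(\medstar)$ for $\ldn$ is precisely the local $(\medstar)$ for $\N$ of hypothesis (4). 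For hypothesis (2) of Theorem \ref{main.theorem}: $\sO$ is entrywise cofibrant in $\M$ hence in $\lcm$; $\sP$ is entrywise cofibrant in $\N$ hence in $\ldn$; the map $f : \sO \to R\sP$ and the functor $R$ are unchanged; and $\fbar : L\sO \to \sP$ is an entrywise local weak equivalence in $\N$, i.e.\ an entrywise weak equivalence in $\ldn$, by hypothesis (5). All hypotheses of Theorem \ref{main.theorem} being satisfied for $L : \lcm \adjoint \ldn : R$ and $f$, the theorem delivers the conclusion. An equivalent but more hands-on route would be to re-run the proof of Proposition \ref{loa.to.pla} with the qualifier ``local'' attached to ``weak equivalence'' throughout, invoking the localized hypotheses (1)--(4) at each step exactly as in that proof; this is parallel to how Proposition \ref{loa.to.pla.Sigma.cof} adapts Proposition \ref{loa.to.pla}.

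I expect the only genuine obstacle to be the claim that $L : \M \adjoint \N : R$ descends to a Quillen equivalence $\lcm \adjoint \ldn$: one must be careful that it is the \emph{derived} image $\D = \underline{L}\C$, rather than the naive image of $\C$, that is the correct class to localize at on the $\N$-side, and that $\lcm$ and $\ldn$ are honest monoidal model categories --- which is not automatic and is exactly where Theorem \ref{thm:PPAxiom-nontractable} and our standing assumptions are used. Everything else is bookkeeping, verifying that the hypotheses appearing in Def.\ \ref{def:nice.qeq} and in Theorem \ref{main.theorem}(2) are unaffected by passing from $\M$ and $\N$ to their left Bousfield localizations.
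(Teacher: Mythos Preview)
Your proposal is correct and follows essentially the same approach as the paper: both reduce to verifying that the localized adjunction $L : \lcm \adjoint \ldn : R$ satisfies the hypotheses of Theorem \ref{main.theorem}, and then invoke that theorem directly. You are slightly more explicit than the paper about why the localized adjunction remains a Quillen \emph{equivalence} (the paper folds this into ``by condition (1)''), but the structure of the argument---descend the adjunction via Hirschhorn 3.3.20, then check niceness and hypothesis (2) pointwise using that localization preserves cofibrations---is identical.
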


We also have a streamlined version for $\Sigma$-cofibrant colored operads, that we state after the proof.

\begin{proof}
The definition of $\D$ as $\underline{L}\C$ guarantees that the adjunction $(L,R)$ descends to an adjuction $L: \lcm \adjoint \ldn : R$, by Theorem 3.3.20 in \cite{hirschhorn}. We will apply Theorem \ref{main.theorem} to this adjunction. Condition (5) is the local version of condition (2) of Theorem \ref{main.theorem}, since $\sO$ (resp., $\sP$) is entrywise cofibrant locally if and only if it is entrywise cofibrant in $\M$ (resp., $\N$). We are left to prove $L: \lcm \adjoint \ldn : R$ is a nice Quillen equivalence (Def. \ref{def:nice.qeq}). It is a weak symmetric monoidal Quillen equivalence by condition (1) of the theorem. Note that this is a weaker condition than simply assuming $(L,R)$ is a weak symmetric monoidal Quillen equivalence relative to $\M$ and $\N$. 

Next, $(\filledstar)$ only references the cofibrations, and so holds in $\M$ if and only if it holds in $\lcm$, because whenever an object $X$ is $\Sigma_n$-cofibrant in $\M$, it is $\Sigma_n$-cofibrant in $\lcm$. The same holds for $\N$. The same argument shows that the domains of the generating cofibrations in $\lcm$ are cofibrant.

We have assumed $(\#)$ and $(\medstar)$ for $\lcm$ and $\ldn$, but we note that condition (3) above is weaker than simply assuming $(\#)$ for $\M$ and $\N$, since every weak equivalence is a local weak equivalence. Similarly, assuming $(\medstar)$ is weaker than the usual method of getting a functor to preserve local weak equivalences (namely, Theorem 3.3.18 in \cite{hirschhorn}), because we do not need the functor $-\otimes_{\Sigma_n} X$ to be left Quillen.

Lastly, we have assumed $(\clubsuit)$ in $\lcm$ and $\ldn$, and this implies that \\
$L:\lcm \adjoint \ldn:R$ is a nice Quillen equivalence. Note that $\clubcof$ is the same in $\M$ and in $\lcm$.  Conditions guaranteeing $\clubtcof$ to hold in any left Bousfield localization are given in \cite{white-yau}. Examples include spaces, spectra, and chain complexes over a field of characteristic zero.
\end{proof}

We now state the version of this result for $\Sigma$-cofibrant colored operads. The proof involves applying Theorem \ref{main.theorem.Sigma} to the induced adjunction $L:\lcm \adjoint \ldn:R$, as above.

\begin{theorem}[Lifting Local Quillen Equivalences for $\Sigma$-Cofibrant Operads]
\label{main.theorem.local.Sigma}
Suppose:
\begin{enumerate}
\item $L : \M \adjoint \N : R$ is a Quillen equivalence where $R$ is lax symmetric monoidal. Suppose
\begin{enumerate}
\item For all $X,Y$ cofibrant in $\M$, the comonoidal map $L(X\otimes Y)\to LX \otimes LY$ is a local weak equivalence in $\N$.
\item For some cofibrant replacement $Q \tensorunitm$ of the unit $\tensorunitm$, the composition \\
$LQ\tensorunitm \to L \tensorunitm \to \tensorunitn$ is a local weak equivalence in $\N$.
\end{enumerate} 
Recall that $\D = \underline{L}\C$.
\item The generating cofibrations in $\M$ have cofibrant domain.
\item $f : \sO \to R\sP$ is a map of $\fC$-colored operads in $\M$, $\sO$ a $\Sigma$-cofibrant $\fC$-colored operad in $\M$, $\sP$ a $\Sigma$-cofibrant $\fC$-colored operad in $\N$, and the entrywise adjoint $\fbar : L\sO \to \sP$ is an entrywise local weak equivalence in $\N$. 
\end{enumerate}
Then the lifted adjunction \eqref{lbar.ocomp.diagram}
\[\nicexy{\algolcm \ar@<2.5pt>[r]^-{\Lbar} & \algpldn \ar@<2.5pt>[l]^-{R}}\]
is a Quillen equivalence between the semi-model categories of $\sO$-algebras in $\lcm$ and of $\sP$-algebras in $\ldn$.
\end{theorem}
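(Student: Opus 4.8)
The plan is to deduce this from Theorem \ref{main.theorem.Sigma} applied to the descended adjunction $L : \lcm \adjoint \ldn : R$, mirroring the proof of Theorem \ref{main.theorem.local} but invoking the $\Sigma$-cofibrant machinery in place of the ``nice Quillen equivalence'' machinery. First I would record that the adjunction descends: since $\D = \underline{L}\C$, the composite $\M \xrightarrow{L} \N \to \ldn$ carries every map of $\C$ to a weak equivalence in $\ldn$, so by the universal property of left Bousfield localization (Theorem 3.3.20 in \cite{hirschhorn}) $L$ factors through a left Quillen functor $\lcm \to \ldn$ whose right adjoint is again $R$. Because $\C$ corresponds to $\D = \underline{L}\C$ under the derived equivalence $\Ho(\M) \simeq \Ho(\N)$ induced by $(L,R)$, this descended adjunction is again a Quillen equivalence; this is the whole point of defining $\D$ this way (cf. \cite{white-thesis}).

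Next I would verify the three hypotheses of Theorem \ref{main.theorem.Sigma} for $L : \lcm \adjoint \ldn : R$. For hypothesis (1): the descended adjunction is a Quillen equivalence by the previous step, and conditions (1)(a) and (1)(b) of the present theorem are exactly conditions (1) and (2) of Definition \ref{def:weak.symmetric.monoidal} read inside $\lcm$ and $\ldn$, since left Bousfield localization does not alter the cofibrations (so cofibrant objects of $\lcm$, resp.\ $\ldn$, are those of $\M$, resp.\ $\N$) and a local weak equivalence in $\N$ is by definition a weak equivalence in $\ldn$; hence $L : \lcm \adjoint \ldn : R$ is a weak symmetric monoidal Quillen equivalence. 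For hypothesis (2): left Bousfield localization changes neither the cofibrations of $\M$ and $\N$ nor the induced projective structures on the diagram categories of colored symmetric sequences, so $\sO$ stays $\Sigma$-cofibrant in $\lcm$ and $\sP$ stays $\Sigma$-cofibrant in $\ldn$, and $\fbar : L\sO \to \sP$, being an entrywise local weak equivalence in $\N$ by hypothesis (3), is an entrywise weak equivalence in $\ldn$. For hypothesis (3): the generating cofibrations of $\lcm$ coincide with those of $\M$, which have cofibrant domains by hypothesis (2), and cofibrancy in $\lcm$ agrees with cofibrancy in $\M$.

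Granting these, Theorem \ref{main.theorem.Sigma} applied to $L : \lcm \adjoint \ldn : R$ and $f : \sO \to R\sP$ gives that the lifted adjunction $\Lbar : \algolcm \adjoint \algpldn : R$ is a Quillen equivalence between the semi-model categories of $\sO$-algebras in $\lcm$ and of $\sP$-algebras in $\ldn$, as claimed; the needed semi-model structures exist because $\lcm$ and $\ldn$ are cofibrantly generated monoidal model categories and the operads are $\Sigma$-cofibrant. The hypothesis list here is shorter than in Theorem \ref{main.theorem.local} precisely because Theorem \ref{main.theorem.Sigma} does not require a nice Quillen equivalence: with $\Sigma$-cofibrant operads the filtration argument of Proposition \ref{loa.to.pla.Sigma.cof} only uses that $X \tensoroversigmar (-)$ is left Quillen for projectively cofibrant $X$ together with Ken Brown's lemma, so none of $(\filledstar)$, $(\#)$, $(\clubsuit)$, or the local form of $(\medstar)$ is needed. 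I expect the single point demanding care is the one flagged above — checking that a genuine Quillen equivalence, and not merely a Quillen adjunction, descends to the localizations — and that is exactly what the choice $\D = \underline{L}\C$ secures.
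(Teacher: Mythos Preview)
Your proposal is correct and follows exactly the approach the paper takes: the paper's own proof is a single sentence saying ``apply Theorem \ref{main.theorem.Sigma} to the induced adjunction $L:\lcm \adjoint \ldn:R$, as above,'' and you have simply spelled out in detail the verifications implicit in that phrase, mirroring the argument for Theorem \ref{main.theorem.local}.
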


\begin{remark}
Recently, the first author and Michael Batanin have investigated local operadic algebras. Theorem 3.4 of \cite{batanin-white-eilenberg}, proves that, whenever $\algolcm$ has a transferred semi-model structure, the Bousfield localization $L_{\sO(\C)} \algom$ exists and coincides with $\algolcm$, where $\sO(\C)$ denotes the free $\sO$-algebra maps on $\C$. Combining this with Theorem \ref{main.theorem.local} provides a Quillen equivalence between $L_{\sO(\C)} \algom$ and $L_{\sP(\D)} \algpn$. This can be viewed as an enhancement to Theorem 3.3.20 in \cite{hirschhorn}, as it allows for simultaneously changing the model category and the operad.
\end{remark}

\subsection{Special Cases}

Following Section \ref{sec:rect-and-change}, we provide several applications of the results above. We begin with rectification.

\begin{corollary}[Local Rectification]
\label{cor.rectification.local}
Suppose $\M$ is a cofibrantly generated monoidal model category, and $\lcm$ is a monoidal left Bousfield localization. Suppose $\M$ satisfies $(\filledstar)$ (Def. \ref{def:star}), $\clubcof$ (Def. \ref{def:club}), and every generating cofibration has a cofibrant domain. Suppose $\M$ satisfies local versions of $(\medstar)$ and $\clubtcof$ as in Theorem \ref{main.theorem.local}. Suppose $\fC$ is a set, and $f : \sO \to \sP$ is a map of entrywise cofibrant $\fC$-colored operads that is an entrywise $\C$-local weak equivalence in $\M$.  Then the induced adjunction
\[\nicexy{\algolcm \ar@<2.5pt>[r]^-{f_!} & \algplcm \ar@<2.5pt>[l]^-{f^*}}\]
is a Quillen equivalence between semi-model categories.
\end{corollary}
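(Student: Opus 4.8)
The plan is to deduce this exactly as Corollary \ref{cor.rectification} was deduced from Theorem \ref{main.theorem}: apply Theorem \ref{main.theorem.local} with $\N = \M$ and $L = R = \Id_{\M}$. With these choices the class $\D = \underline{\Id}\,\C$ defines the same localization as $\C$, so $\ldn = \lcm$; the lifted right adjoint of Definition \ref{lifted.right.adjoint} is restriction $f^*$ along $f$, and its left adjoint $\Lbar$ is the functor $f_!$. Hence the conclusion of Theorem \ref{main.theorem.local} for this data is precisely the asserted Quillen equivalence $f_! : \algolcm \adjoint \algplcm : f^*$, and all that remains is to check the five hypotheses of Theorem \ref{main.theorem.local} in this special case.

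First, $(\Id,\Id)$ is a Quillen equivalence with $R = \Id$ trivially lax symmetric monoidal; its comonoidal structure map is the identity, so condition (1a) holds, and the composite in (1b) reduces to a cofibrant replacement $q : Q\tensorunitm \to \tensorunitm$ of the unit, which is a weak equivalence and a fortiori a $\C$-local one. Condition (2) --- $(\filledstar)$ in $\M$ and generating cofibrations with cofibrant domain --- is assumed verbatim. For (3), $(\#)$ holds automatically since $L = \Id$ is strong symmetric monoidal (the example following Definition \ref{def:sharp}), while $(\clubsuit) = \clubcof + \clubacof$ holds in $\lcm$ because $\clubcof$ involves only cofibrations and is therefore unchanged by left Bousfield localization (it is assumed in $\M$), and $\clubacof$ in $\lcm$ is exactly the local version of $\clubtcof$ assumed here. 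Condition (4) is the assumed local version of $(\medstar)$ in $\M = \N$. Finally, in (5) the map $f : \sO \to R\sP = \sP$ is our given map of entrywise cofibrant $\fC$-colored operads, and its entrywise adjoint $\fbar : L\sO = \sO \to \sP$ is $f$ itself, which is an entrywise $\C$-local weak equivalence by hypothesis.

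Granting these verifications, Theorem \ref{main.theorem.local} applies and yields the result. I do not expect a genuine obstacle: all of the homotopical substance is already contained in Proposition \ref{loa.to.pla} and Theorem \ref{main.theorem}, which Theorem \ref{main.theorem.local} packages. The only points worth stating with care are the bookkeeping identifications ($\ldn = \lcm$ and $\Lbar \dashv R$ equals $f_! \dashv f^*$) and the observation --- reused from the proof of Theorem \ref{main.theorem.local} --- that $\clubcof$ is insensitive to the localization, so that the only genuinely \emph{local} inputs that must be assumed separately are the local forms of $(\medstar)$ and $\clubtcof$.
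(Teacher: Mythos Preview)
Your proposal is correct and matches the paper's approach: the corollary is stated without proof in the paper, being an immediate specialization of Theorem \ref{main.theorem.local} to the case $\N = \M$, $L = R = \Id$, exactly parallel to how Corollary \ref{cor.rectification} follows from Theorem \ref{main.theorem}. Your hypothesis-by-hypothesis verification is accurate, including the observations that $(\#)$ is automatic for the identity, that $\clubcof$ is insensitive to localization, and that $\Lbar \dashv R$ reduces to $f_! \dashv f^*$ when $R = \Id$.
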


\begin{corollary}[Local Rectification for $\Sigma$-Cofibrant Operads]
\label{cor.rectification.local.sigma}
Suppose $\M$ is a cofibrantly generated monoidal model category, and $\lcm$ is a monoidal left Bousfield localization. Suppose that in $\M$ every generating cofibration has a cofibrant domain. Suppose $\fC$ is a set, and $f : \sO \to \sP$ is a map of $\Sigma$-cofibrant $\fC$-colored operads that is an entrywise $\C$-local weak equivalence in $\M$.  Then the induced adjunction
\[\nicexy{\algolcm \ar@<2.5pt>[r]^-{f_!} & \algplcm \ar@<2.5pt>[l]^-{f^*}}\]
is a Quillen equivalence between semi-model categories.
\end{corollary}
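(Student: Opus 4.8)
The plan is to deduce this corollary from Theorem \ref{main.theorem.local.Sigma} by specializing to the case $L = R = \Id_{\M}$, in exactly the way Corollary \ref{cor.rectification} was deduced from Theorems \ref{main.theorem} and \ref{main.theorem.Sigma} in Section \ref{sec:rect-and-change}. First I would observe that when $L = R = \Id_{\M}$ we have $\N = \M$, and since the left derived functor $\underline{L}$ is the identity on homotopy categories, the class $\D = \underline{L}\C$ determines the same local weak equivalences as $\C$; hence $\ldn = \lcm$ and $\algpldn = \algplcm$. Under this identification the lifted left adjoint $\Lbar$ of Definition \ref{lifted.right.adjoint} is the extension-of-scalars functor $f_!$ along $f$ (the coequalizer construction of \cite{bm07}) and the lifted right adjoint $R$ is the restriction functor $f^*$, so it only remains to check hypotheses (1)--(3) of Theorem \ref{main.theorem.local.Sigma}.

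Next I would verify these hypotheses. Condition (1): $\Id \dashv \Id$ is trivially a Quillen equivalence, and $R = \Id$ is strong symmetric monoidal, so its comonoidal structure map \eqref{comonoidal.map} is an identity and in particular a local weak equivalence (condition (1a)); taking $Q\tensorunitm = \tensorunitm$, the composite \eqref{unit.adjoint} is likewise an identity, hence a local weak equivalence (condition (1b)). Condition (2) is precisely the standing hypothesis that every generating cofibration in $\M$ has cofibrant domain. Condition (3): by hypothesis $f : \sO \to R\sP = \sP$ is a map of $\Sigma$-cofibrant $\fC$-colored operads in $\M$, and its entrywise adjoint $\fbar : L\sO = \sO \to \sP$ is $f$ itself, which is assumed to be an entrywise $\C$-local weak equivalence in $\M$. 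With (1)--(3) in hand, Theorem \ref{main.theorem.local.Sigma} yields that $\Lbar = f_! \dashv R = f^*$ is a Quillen equivalence between the semi-model categories $\algolcm$ and $\algpldn = \algplcm$, which is the claim. (The existence of these semi-model structures over $\mtoc$ is already part of the cited theorem, via Theorem 6.3.1 of \cite{white-yau} for $\Sigma$-cofibrant operads.)

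I do not expect any genuine obstacle here: the entire content of the corollary is packaged inside Theorem \ref{main.theorem.local.Sigma}, and the specialization $L = R = \Id$ trivializes all of the monoidal-comparison hypotheses. The only point requiring a word of care is the bookkeeping identification $\ldn = \lcm$ when $L = \Id$, which follows from the universal property of left Bousfield localization (Theorem 3.3.20 of \cite{hirschhorn}) applied to the identity functor, together with the fact that $\underline{\Id}$ acts as the identity on the homotopy category; once this is noted, the proof is a direct citation.
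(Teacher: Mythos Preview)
Your proposal is correct and is exactly the approach the paper intends: the corollary is stated without proof in the paper, as it is the immediate specialization of Theorem \ref{main.theorem.local.Sigma} to $L = R = \Id_{\M}$, parallel to how Corollary \ref{cor.rectification} follows from Theorem \ref{main.theorem.Sigma}. Your verification of hypotheses (1)--(3) and the bookkeeping identification $\ldn = \lcm$ are all in order.
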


We turn now to modules, (commutative) monoids, (commutative) algebras, non-symmetric operads,  generalized props, cyclic operads, and modular operads. 

\begin{corollary}
Assume that the conditions of Theorem \ref{main.theorem.local} are satisfied (for $\Sigma$-cofibrant situations, Theorem \ref{main.theorem.local.Sigma} suffices).
\begin{enumerate}
\item Suppose $T$ is a (commutative) monoid that is cofibrant as an object in $\M$, and $L$ is lax (symmetric) monoidal. Then there is an induced Quillen equivalence
\[\nicexy{\Mod(T;\lcm) \ar@<2.5pt>[r]^-{\Lbar} & \Mod(LT;\ldn) \ar@<2.5pt>[l]^-{R}}\]
between the semi-model categories of left $T$-modules in $\lcm$ and of left $LT$-modules in $\ldn$.

\item Suppose $T$ is a commutative monoid that is cofibrant as an object in $\M$, and $L$ lax symmetric monoidal. Then there is an induced Quillen equivalence
\[\nicexy{\Alg(T;\lcm) \ar@<2.5pt>[r]^-{\Lbar} & \Alg(LT;\ldn) \ar@<2.5pt>[l]^-{R}}.\]

\item Suppose $T$ is a commutative monoid that is cofibrant as an object in $\M$, and $L$ lax symmetric monoidal. Then there is an induced Quillen equivalence
\[\nicexy{\calg(T;\lcm) \ar@<2.5pt>[r]^-{\Lbar} & \calg(LT;\ldn) \ar@<2.5pt>[l]^-{R}}.\]

\item Suppose the tensor units in $\M$ and $\N$ are cofibrant.  Then there is an induced Quillen equivalence
\[\nicexy{\omegaoperad(\lcm) \ar@<2.5pt>[r]^-{\Lbar} & \omegaoperad(\ldn) \ar@<2.5pt>[l]^-{R}}.\]

\item Suppose that the tensor units in $\M$ and $\N$ are cofibrant.  Then for each pasting scheme $\fG$ in the sense of \cite{jy2} (Def. 8.2), there is an induced Quillen equivalence
\[\nicexy{\gprop(\lcm) \ar@<2.5pt>[r]^-{\Lbar} & \gprop(\ldn) \ar@<2.5pt>[l]^-{R}}\]
between the semi-model categories of $\fG$-props \cite{jy2} (10.39) in $\lcm$ and in $\ldn$.

\item Suppose that the tensor units in $\M$ and $\N$ are cofibrant.  Then for each set $\fC$, there are induced Quillen equivalences
\[\nicexy{\cyoperad(\lcm) \ar@<2.5pt>[r]^-{\Lbar} & \cyoperad(\ldn) \ar@<2.5pt>[l]^-{R}}\]
and
\[\nicexy{\modoperad(\lcm) \ar@<2.5pt>[r]^-{\Lbar} & \modoperad(\ldn) \ar@<2.5pt>[l]^-{R}}.\]
\end{enumerate}

\end{corollary}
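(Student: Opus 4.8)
The plan is to obtain each of the six statements by specializing Theorem \ref{main.theorem.local} --- or, when the colored operads that arise are $\Sigma$-cofibrant, the lighter Theorem \ref{main.theorem.local.Sigma} --- to an appropriate pair of $\fC$-colored operads, following the pattern of the non-local Corollaries \ref{cor.com.module}, \ref{cor.algebras}, \ref{cor.com.algebra}, \ref{cor.muro}, \ref{cor.gprop}, and \ref{cor.cyclic}. Since the corollary assumes the full hypothesis list of Theorem \ref{main.theorem.local}, the only item left to check in each case is its condition (5): that $\sO$ and $\sP$ are entrywise cofibrant in $\M$ and $\N$, and that the entrywise adjoint $\fbar : L\sO \to \sP$ is an entrywise $\C$-local weak equivalence in $\N$. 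For the second requirement I would simply note, as throughout this section, that it is enough to exhibit a $\fbar$ which is already an ordinary entrywise weak equivalence in $\N$, because $\ldn$ retains every weak equivalence of $\N$.

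First I would treat items (1), (2), and (4) --- left $T$-modules, $T$-algebras, and $1$-colored non-symmetric operads --- where the governing operads are $\Sigma$-cofibrant: for $T$-modules only the arity-$1$ entry $T$ is non-$\varnothing$; for $T$-algebras the enveloping operad is $\Sigma$-cofibrant by \cite{harper-gnt} (5.17) because $T$ is cofibrant in $\M$; and the operad whose algebras are non-symmetric operads is $\Sigma$-cofibrant once the tensor units are cofibrant. I would then invoke Theorem \ref{main.theorem.local.Sigma}, whose hypotheses (1)--(3) are contained in the standing assumptions together with the cofibrancy of $T$ (resp. the tensor units), and observe that $f$ and $\fbar$ are precisely as in Corollaries \ref{cor.com.module}, \ref{cor.algebras}, and \ref{cor.muro}: $f$ is built from the adjunction unit $T \to RLT$ (or is an identity in the arities where the operad is $\varnothing$), and $\fbar$ is, entrywise, an identity or a (co)product of copies of $\Rbar^0 : L\tensorunitm \to \tensorunitn$. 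In every case these are weak equivalences between cofibrant objects of $\N$, hence $\C$-local weak equivalences.

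Next I would handle items (3), (5), and (6) --- commutative $T$-algebras, $\fG$-props, and $\fC$-colored cyclic and modular operads --- where the relevant colored operads fail to be $\Sigma$-cofibrant, so the full Theorem \ref{main.theorem.local} must be used; its conditions (1)--(4) are exactly the standing hypotheses of the corollary, with cofibrancy of $T$ or of the tensor units included. I would verify entrywise cofibrancy of these operads from the explicit descriptions in \cite{jy2} (14.1) for generalized props and in \cite{gk95,gk98,mss} for cyclic and modular operads, using that the tensor units of $\M$ and $\N$ (and $T$, in the commutative case) are cofibrant. The comparison map $\fbar : L\sO \to \sP$ is, entrywise, assembled from $\Rbar^0$ and the adjunction unit exactly as in Corollaries \ref{cor.com.algebra}, \ref{cor.gprop}, and \ref{cor.cyclic}, hence is an entrywise weak equivalence, and therefore an entrywise $\C$-local weak equivalence, in $\N$. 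Theorem \ref{main.theorem.local} then produces the claimed Quillen equivalences.

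I do not anticipate a genuine obstacle: each argument is the corresponding non-local one with ``weak equivalence'' replaced by ``$\C$-local weak equivalence'', and the only care required is the bookkeeping of which structure maps comprise $\fbar$ in each case. The one conceptual point worth emphasizing is that Theorem \ref{main.theorem.local} imposes $(\#)$, $(\clubsuit)$, and the local form of $(\medstar)$ on the \emph{localized} categories $\lcm$ and $\ldn$ rather than on $\M$ and $\N$; this is what allows the non-$\Sigma$-cofibrant cases (3), (5), and (6) to go through, since there one cannot retreat to Theorem \ref{main.theorem.local.Sigma}.
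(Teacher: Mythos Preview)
Your proposal is correct and follows exactly the approach the paper intends: the paper does not write out a proof for this corollary at all, treating it as immediate from Theorems \ref{main.theorem.local} and \ref{main.theorem.local.Sigma} by the pattern already established in Section \ref{sec:rect-and-change} (Corollaries \ref{cor.com.module}--\ref{cor.cyclic}). Your division into the $\Sigma$-cofibrant cases (1), (2), (4) handled by Theorem \ref{main.theorem.local.Sigma} and the non-$\Sigma$-cofibrant cases (3), (5), (6) handled by Theorem \ref{main.theorem.local}, together with the observation that an entrywise weak equivalence in $\N$ is automatically a $\C$-local one, is precisely what the paper has in mind.
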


Taking $T$ to be the tensor unit in (2) and (3) implies a Quillen equivalence for local (commutative) monoids.

\section{Applications to (Commutative) $HR$-Algebras, (C)DGAs, $E_\infty$-Algebras, and Motivic Ring Spectra} \label{sec:applications}

\subsection{Dold-Kan Equivalence}

The main application of \cite{ss03} proves that the Dold-Kan equivalence lifts to categories of modules and algebras. This can be viewed as a special case of Theorem \ref{main.theorem.Sigma}, since these are categories of algebras over $\Sigma$-cofibrant operads (as explained in Section \ref{sec:rect-and-change}), since the model categories involved have generating cofibrations with cofibrant domains, and since the Dold-Kan equivalence satisfies the conditions of Theorem \ref{main.theorem.Sigma} (see Example \ref{example:ss03}). 

\subsection{(Commutative) DGAs and $HR$-Algebra Spectra}

The main theorem of \cite{shipley-hz-spectra} proves that the model categories of $HR$-algebra spectra and unbounded differential graded $R$-algebras are Quillen equivalent, where $R$ is a discrete commutative ring. Shipley lifts the chain of Quillen equivalences (with left adjoints on top)
\[\nicexy{ 
HR-Mod \quad 
\ar@<.4ex>^-{Z}[r] &
\quad  \Sp^\Sigma (\sAb) \quad 
\ar@<.4ex>^-{U}[l]
\ar@<-.4ex>_-{\phi^*N}[r] &
\quad  \Sp^\Sigma(\ch) \quad  
\ar@<-.4ex>_-{L}[l]
\ar@<.4ex>^-{D}[r] &
\quad \Ch \quad \ar@<.4ex>^-{R}[l]}\]
to the level of monoids.  Here $HR$ is the Eilenberg-Maclane spectrum, $\sAb$ is the category of simplicial $R$-modules, $\ch$ is the category of non-negatively graded chain complexes of $R$-modules, $\Ch$ is the category of unbounded chain complexes of $R$-modules with the projective model structure, and $\Sp^{\Sigma}(-)$ is a suitable category of symmetric spectra. The Quillen equivalences $(Z,U)$ and $(D,R)$ are strong symmetric monoidal, and the middle one $(L,\phi^*N)$ is weak symmetric monoidal.

Shipley's main result may be viewed as a special case of Theorem \ref{main.theorem.Sigma}, applied to each adjunction. Proposition 2.10 of \cite{shipley-hz-spectra} demonstrates that the adjunctions all satisfy the conditions of Theorem \ref{main.theorem.Sigma}.  The domains of the generating cofibrations in all settings are cofibrant.  To recover Shipley's result, in each of the three Quillen equivalences, $\sO$ and $\sP$ are both the operads whose algebras are monoids as in Corollary \ref{cor.schwede.shipley}.  A key point here is that the associative operad is $\Sigma$-cofibrant, so Theorem \ref{main.theorem.Sigma} is applicable.  Similarly, Shipley's extension to modules (Theorem 2.13 of \cite{shipley-hz-spectra}) can be viewed as a special case of Theorem \ref{main.theorem.Sigma}.

Furthermore, when $R$ has characteristic $0$ (which implies that $(\filledstar)$, $(\medstar)$  (Def. \ref{def:star}), $(\#)$ (Def. \ref{def:sharp}), and $(\clubsuit)$ (Def. \ref{def:club}) are satisfied), Theorem \ref{main.theorem} can be applied to each of Shipley's three Quillen equivalences with $\sO$ and $\sP$ the operads whose algebras are \emph{commutative} monoids as in Corollary \ref{cor.com.monoids}.  This yields, in the characteristic $0$ setting, a zig-zag of three Quillen equivalences 
\begin{equation}\label{shipley.com}
\nicexy{ C(HR-Mod)
\ar@<.4ex>^-{Z}[r] &
C\Bigl(\Sp^\Sigma (\sAb)\Bigr) 
\ar@<.4ex>^-{U}[l]
\ar@<-.4ex>_-{\phi^*N}[r] &
C\Bigl(\Sp^\Sigma(\ch)\Bigr)
\ar@<-.4ex>_-{\overline{L}}[l]
\ar@<.4ex>^-{D}[r] &
C(\Ch) \ar@<.4ex>^-{R}[l]}
\end{equation}
between the categories of commutative $HR$-algebra spectra and of commutative differential graded $R$-algebras.  This confirms a belief expressed in \cite{shipley-hz-spectra}.  As we will discuss below, a zig-zag of Quillen equivalences between the same end categories is also achieved in \cite{richter-shipley} (Corollary 8.4) using \emph{six} Quillen equivalences instead of three here.

\subsection{Commutative $HR$-Algebra Spectra, CDGAs, and $E_\infty$-Algebras}

The main theorem of \cite{richter-shipley} proves a result analogous to the above, but for commutative $HR$-algebra spectra and $E_\infty$-algebras in $\Ch$ for a discrete commutative ring $R$. The chain of Quillen equivalences produced is now:

$$ \nicexy{
{C(HR\text{-mod})} \ar@<0.5ex>[r]^Z  &
\ar@<0.5ex>[l]^U {C(\Sp^\Sigma(\sAb))}
\ar@<-0.5ex>[r]_N   &
\ar@<-0.5ex>[l]_{L_N} {C(\Sp^\Sigma(\ch))}  \ar@<0.5ex>[r]^i  &
\ar@<0.5ex>[l]^{C_0} {C(\Sp^\Sigma(\Ch))}
\ar@<-0.5ex>[d]_{\varepsilon_*}\\
 &  &
{E_\infty \Ch} \ar@<-0.5ex>[r]_-{\ev_0}
& \ar@<-0.5ex>[l]_>>>>>>>{F_0} {E_\infty(\Sp^\Sigma(\Ch))}
\ar@<-0.5ex>[u]_{\varepsilon^*}
}$$
The Quillen equivalence in the bottom row is a special case of Theorem \ref{main.theorem.Sigma}, because $E_\infty$ operads are $\Sigma$-cofibrant. The vertical Quillen equivalence is a special case of rectification. As the commutative operad is not $\Sigma$-cofibrant, we need Theorem \ref{main.theorem} in this setting. Unfortunately, we do not know if the conditions of this theorem are satisfied for symmetric spectra in $\Ch$ for general $R$.  We do, however, know that the conditions are satisfied if $R$ is replaced by a field $k$ of characteristic zero. Once this replacement is made, the vertical Quillen equivalence is a special case of Theorem \ref{main.theorem}. Of the remaining three Quillen equivalences, the outer ones are induced by strong symmetric monoidal Quillen equivalences, while the inner one $(L_N, N)$ is induced by a weak symmetric monoidal Quillen equivalence. However, in the characteristic $0$ setting, this is enough to deduce $(\#)$, $(\filledstar), (\medstar),$ and $(\clubsuit)$. Thus, in the characteristic $0$ case, all five Quillen equivalences are special cases of Theorem \ref{main.theorem}. 

Furthermore, in the characteristic $0$ setting, there is a rectification Quillen equivalence between $E_\infty$-algebras in differential graded modules $E_\infty\Ch$ and commutative differential graded algebras $C(\Ch)$.  In this case, the above zig-zag is prolonged to a zig-zag of six Quillen equivalences between commutative $Hk$-algebra spectra and commutative differential graded $k$-algebras, which is Corollary 8.4 in \cite{richter-shipley}.  However, as discussed in the previous section \eqref{shipley.com}, using the main results of this paper, we actually have a zig-zag with the same end categories involving only three Quillen equivalences, which are Shipley's original adjunctions.

\subsection{Motivic Applications}

The final application of \cite{dmitri-motivic} constructs a strictly commutative ring spectrum for Deligne cohomology. This is done by pushing a commutative differential graded algebra through a chain of Quillen equivalences terminating with strictly commutative motivic ring spectra. However, since both the starting category and the ending category admit rectification, one could instead view the CDGA as an $E_\infty$-algebra in chain complexes, then use Theorem \ref{main.theorem.Sigma} to push this object through a chain of lifted Quillen equivalences, and then use rectification in the positive stable model structure on motivic symmetric spectra (Theorem 3.10, \cite{hornbostel}) to strictify the resulting $E_\infty$-algebra into a strictly commutative ring spectrum.

\subsection{Dual Dold-Kan Equivalence} \label{sec:dualDold}

The dual Dold-Kan equivalence is studied in \cite{cortinas}, where the authors study the classical Quillen pair

\[
K: \Ch^{\geq 0} \leftrightarrows \Ab^{\Delta}: N
\]

between non-negatively graded cochain complexes $\Ch^{\geq 0}$ and cosimplicial abelian groups, where $N$ is the normalized complex functor (also known as the Moore complex), and $K$ is its left adjoint. They replace $K \dashv N$ with the following Quillen pair, which we will refer to as the \textit{monoidal dual Dold-Kan equivalence}:

\[
Q: \Ch^{\geq 0} \leftrightarrows \Ab^{\Fin}: P
\]

where $Q$ is strong symmetric monoidal \cite{cortinas} (5.2, 7.5), and $\Fin$ has the same objects as $\Delta$, but morphisms are any set morphisms. By restricting the $\Fin$-structure to a cosimplicial structure, they obtain a left Quillen equivalence $\tilde{Q}$ that is derived-equivalent to $K$. They explicitly lift $\tilde{Q}$ to a Quillen equivalence between differential graded rings and cosimplicial rings \cite{cortinas} (9.6). This result may be recovered and generalized as a special case of our Theorem \ref{main.theorem}, as we now explain.

\begin{corollary} \label{cor:dual-Dold}
Let $k$ be any commutative ring. Then the monoidal dual Dold-Kan equivalence is a weak monoidal Quillen equivalence and lifts to a Quillen equivalence between differential graded $k$-algebras and cosimplicial $k$-algebras. 
\end{corollary}

\begin{proof}
First, the construction of the adjoint pair $Q\dashv P$ can be carried out in $\Ch(k)^{\geq 0}$ just as well as in $\Ch(\mathbb{Z})^{\geq 0}$, as can the proof that $Q$ is strong symmetric monoidal. Since the monoidal units of $\Ch(k)^{\geq 0}$ and $(\Vect_k)^{\Fin}$ are cofibrant, $Q\dashv P$ is a weak symmetric monoidal Quillen equivalence \ref{def:weak.symmetric.monoidal}, and hence the result is a consequence of Corollary \ref{cor.schwede.shipley}.
\end{proof}

We may also expand the work of \cite{cortinas} to other colored operads.

\begin{corollary}
Let $k$ be a field of characteristic zero. Then the monoidal dual Dold-Kan equivalence lifts to Quillen equivalences 
\begin{enumerate}
\item between commutative differential graded $k$-algebras and cosimplicial commutative $k$-algebras; 
\item between differential graded Lie $k$-algebras and cosimplicial Lie $k$-algebras.
\end{enumerate}
\end{corollary}

\begin{proof}
Because $k$ has characteristic zero, the monoidal dual Dold-Kan equivalence is a Nice Quillen equivalence (Example \ref{ex:nice}). The result now follows in precisely the same way as Corollaries \ref{cor.com.monoids} and \ref{cor:Lie}.
\end{proof}


\end{document}